\numberwithin{equation}{section}
\newtheorem{remark}{Remark}[section]
\newtheorem{thm}{Theorem}[section]
\newtheorem{lem}{{Lemma}}[section]
\newcommand{\eps}{\varepsilon}
\newcommand{\Ome}{{\Omega}}
\newcommand{\p}{{\partial}}  
\newcommand{\Del}{{\Delta}}
\newcommand{\nab}{\nabla}
\newcommand{\nonum}{\nonumber}
\renewcommand{\(}{\bigl(} 
\renewcommand{\)}{\bigr)}
\newcommand{\bl}{\bigl\langle}
\newcommand{\br}{\bigr\rangle}
\newcommand{\mce}{\mathcal{E}_h}
\newcommand{\mcei}{\mce^I}
\newcommand{\mceb}{\mce^B}
\newcommand{\mct}{\mathcal{T}_h}
\newcommand{\avg}[1]{\bigl\{\hspace{-0.1cm}\bigl\{#1\bigr\}\hspace{-0.1cm}\bigr\}}
\newcommand{\jump}[1]{\bigl[\hspace{-0.075cm}\bigl[#1\bigr]\hspace{-0.075cm}\bigr]}
\newcommand{\vh}{V_h}
\newcommand{\les}{\lesssim}
\newcommand{\Lcal}{\mathcal{L}}
\newcommand{\Lcalh}{\mathcal{L}_h}
\newcommand{\Lcalo}{\mathcal{L}_{\text{\rm  0}}}
\newcommand{\Lcaloh}{\mathcal{L}_{\text{\rm 0},h}}
\newcommand{\An}{A_{\text{\rm 0}}}
\newcommand{\xn}{x_{\text{\rm 0}}}
\newcommand{\bw}{\bm w}
\newcommand{\bn}{\bm n}
\newcommand{\cTho}{\mathcal{T}_h(\Ome)}
\newcommand{\cQ}{\mathcal{Q}_h}
\newcommand{\ind}{\chi_{B_{R^{\prime\prime}}}}
\begin{document}

\title{$C^0$ discontinuous Galerkin finite element methods for second order linear elliptic partial 
differential equations in non-divergence form\thanks{This work was partial supported by the NSF through 
grants DMS-1016173 and DMS-1318486 (Feng),  and DMS-1417980 (Neilan),
and the Alfred Sloan Foundation (Neilan).}}

\author{Xiaobing Feng\thanks{Department of Mathematics, The University of Tennessee, 
Knoxville, TN 37996 (xfeng@math.utk.edu).}
\and{Lauren Hennings}\thanks{Department of Mathematics, University of Pittsburgh, 
Pittsburgh, PA 15260 (LNH31@pitt.edu).}
\and{Michael Neilan}\thanks{Department of Mathematics, University of Pittsburgh, 
Pittsburgh, PA 15260 (neilan@pitt.edu).} }

\date{}

\maketitle

\thispagestyle{empty}

\begin{abstract}
This paper is concerned with finite element approximations of $W^{2,p}$ strong solutions of 
second-order linear elliptic partial differential equations (PDEs) in non-divergence form with 
continuous coefficients. A nonstandard (primal) finite element method, which uses finite-dimensional 
subspaces consisting globally continuous piecewise polynomial functions, is proposed and analyzed.
The main novelty of the finite element method is to introduce an interior penalty term, which 
penalizes the jump of the flux across the interior element edges/faces, to augment a 
nonsymmetric piecewise defined and PDE--induced bilinear form. Existence, uniqueness 
and error estimate in a discrete $W^{2,p}$ energy norm are proved for the proposed finite element 
method. 
This is achieved by 
establishing a discrete Calderon--Zygmund--type estimate and 
mimicking strong solution PDE techniques at the discrete level. Numerical experiments 
are  provided to test the performance of proposed finite element method and to
validate the convergence theory.
\end{abstract}

\section{Introduction}\label{sec-1}
In this paper we consider finite element approximations of the following linear elliptic PDE
in non-divergence form:
\begin{subequations} \label{problem}
\begin{alignat}{2} \label{problem1}
\Lcal u:=-A:D^2 u & = f\qquad &&\text{in }\Ome,\\
u& = 0\qquad &&\text{on }\p\Ome.
\end{alignat}
\end{subequations}
Here, $\Ome\subset \mathbb{R}^n$ is an open bounded domain with
boundary $\p\Ome$, $f\in L^p(\Omega)\ (1<p<\infty)$ is given,
and 
$A=A(x)\in \big[C^{0}(\overline{\Ome})\big]^{n\times n}$ is a positive
definite matrix on $\overline{\Omega}$, but not necessarily differentiable.
Problems such as \eqref{problem} arise in fully nonlinear elliptic Hamilton-Jacobi-Bellman 
equations, a fundamental problem in the field of stochastic optimal control 
\cite{FlemingSonerBook,JensenSmears13}. In addition, elliptic PDEs in non-divergence 
form appear in the linearization and numerical methods of fully nonlinear second order 
PDEs \cite{CaffarelliGut97,FengNeilan09,Neilan13}.

Since $A$ is not smooth, the PDE \eqref{problem1} cannot be written in divergence form,
and therefore notions of weak solutions defined by variational principles are not applicable. 
Instead, the existence and uniqueness of solutions are generally sought in the classical 
or strong sense. In the former case, Schauder theory states the existence of a unique 
solution $u\in C^{2,\alpha}(\Omega)$ to \eqref{problem} provided the coefficient matrix 
and source function are H\"older continuous, and if the boundary satisfies 
$\p\Omega\in C^{2,\alpha}$.  In the latter case, the Calderon-Zygmund theory states the existence 
and uniqueness of $u\in W^{2,p}(\Omega)$ satisfying \eqref{problem} almost everywhere provided
$f\in L^p(\Omega)$, $A\in [C^0(\overline{\Omega})]^{n\times n}$ and $\p\Omega\in C^{1,1}$.
In addition, the existence of a strong solution to \eqref{problem} in two-dimensions
and on convex domains is proved in \cite{LadyBook,Bernstein1910,BabuskaOsborn94}.

Due to their non-divergence structure, designing convergent numerical methods, in particular, 
Galerkin-type methods, for problem \eqref{problem} has been proven to be difficult. Very
few such results are known in the literature. Nevertheless, even problem \eqref{problem} 
does not naturally fit within the standard Galerkin framework, several finite element methods 
have been recently proposed.  In \cite{LakkisPryer11} the authors 
considered mixed finite element methods using Lagrange finite element spaces for problem 
\eqref{problem}.  An analogous discontinuous Galerkin (DG) method 
was proposed in \cite{DednerPryer13}.  The convergence analysis of these methods
for non-smooth $A$ remains open.  A least-squares-type discontinuous Galerkin 
method for problem \eqref{problem} with coefficients satisfying 
the Cordes condition was proposed and analyzed in \cite{SmearsSuli}.
Here, the authors established optimal order estimates in $h$ with respect to a $H^2$-type norm.

The primary goal of this paper is to develop a structurally simple and computationally 
easy finite element method for problem \eqref{problem}. Our method is a primal method 
using Lagrange finite element spaces. The method is well defined  
for all polynomials degree greater than one and can be easily implemented on current 
finite element software. Moreover, our finite element method resembles interior
penalty discontinuous Galerkin (DG) methods in its formulation and its bilinear form, 
which contains an interior penalty term penalizing the jumps of the 
fluxes across the element edges/faces. Hence, it is a $C^0$ DG finite element method. 
In addition, we prove that the proposed  method is stable and converges with optimal 
order in a discrete $W^{2,p}$-type norm on quasi-uniform meshes provided that the 
polynomial degree of the finite element space is greater than or equal to two.

\begin{figure}
\tikzstyle{blockg} = [rectangle, draw, fill=white!20, 
text width=13em, text centered, rounded corners, minimum height=4em]
 \tikzstyle{blockg2} = [rectangle, draw, fill=white!20, 
 text width=17em, text centered, rounded corners, minimum height=4em]
\tikzstyle{line} = [draw, -latex']
\begin{center}
\begin{tikzpicture}[node distance = 7cm, auto]
 \node [blockg] (init) {{\small I.  Global stability estimate for PDEs  with constant coefficients\smallskip\\ 
    $\|w_h\|_{W^{2,p}_h(\Omega)}\les \|\Lcaloh w_h\|_{L^p_h(\Omega)}$}};
    \node[blockg,right of=init](localc){{\small II. Local stability estimate for PDEs with constant coefficients\smallskip\\ 
    $\|w_h\|_{W^{2,p}_h(B)}\les \|\Lcaloh w_h\|_{L^p_h(B^\prime)}$}};
    \node[blockg,below of=localc,node distance=3cm](localnd)
    {{\small III. Local stability estimate for PDEs in non-divergence form\smallskip\\
    $\|w_h\|_{W^{2,p}_h(B)}\les \|\Lcalh w_h\|_{L^p_h(B^\prime)}$}};
    \node[blockg2,left of=localnd](garding)
    {{\small IV. Global G\"arding-type inequality for PDEs in non-divergence form\smallskip\\
    $\|w_h\|_{W^{2,p}_h(\Omega)}\les \|\Lcalh w_h\|_{L^p_h(\Omega)}+\|w_h\|_{L^p(\Omega)}$}};
      \node[blockg,below of=garding,node distance=3cm](globalnd)
    {{\small V. Global stability estimate for PDEs in non-divergence form\smallskip\\
    $\|w_h\|_{W^{2,p}_h(\Omega)}\les \|\Lcalh w_h\|_{L^p_h(\Omega)}$}};
\path [line] (init) -- (localc);
\path [line] (localc) -- (localnd);
\path [line] (localnd) -- (garding);
\path [line] (garding) -- (globalnd);                
\end{tikzpicture}
\end{center}
\caption{\label{ProofFig}Outline of the convergence proof.}
\end{figure}
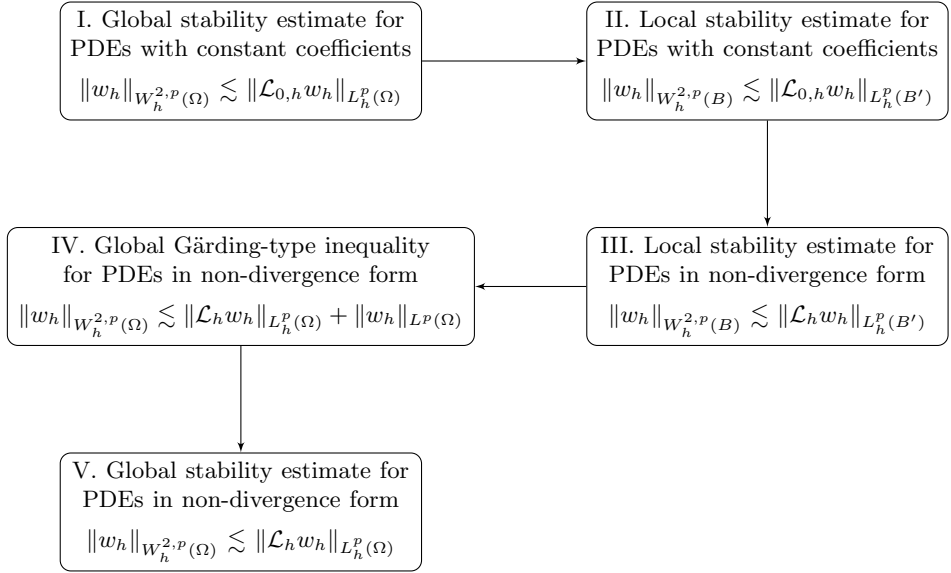

While the formulation and implementation of the finite element method is relatively
simple, the convergence analysis is quite involved, and it requires several nonstandard 
arguments and techniques. The overall strategy in the convergence analysis is
to mimic, at the discrete level, the stability analysis of strong solutions of PDEs
in non-divergence form (see \cite[Section 9.5]{Gilbarg_Trudinger01}).
Namely, we exploit the fact that locally, the finite element discretization
is a perturbation of a discrete elliptic operator in divergence form with
constant coefficients; see Lemma \ref{operatorDiffForm}.
The first step of the stability argument is to establish a discrete
Calderon-Zygmund-type estimate for the Lagrange finite element
discretization of the elliptic operator in \eqref{problem} with constant coefficients, 
which is equivalent to a global inf-sup condition for the discrete operator.
The second step is to prove a local version of the global estimate and inf-sup condition.
With these results in hand, local stability estimate for the proposed $C^0$ DG discretization
of \eqref{problem} can be easily obtained. We then glue these local stability estimates
to obtain a global G\"arding-type inequality.  Finally, to circumvent the lack of a (discrete)
maximum principle which is often used in the PDE analysis, we use a nonstandard duality
argument to obtain a global inf-sup condition for the proposed $C^0$ DG discretization for
problem \eqref{problem}.  See Figure \ref{ProofFig} for an outline
of the convergence proof.  Since the method is linear and consistent, the stability
estimate naturally leads to the well-posedness of the method and the energy norm error estimate.

The organization of the paper is as follows.  In Section \ref{sec-2}
the notation is set, and some preliminary results are given. Discrete $W^{2,p}$ 
stability properties, including a discrete Calderon-Zygmund-type estimate, of finite 
element discretizations of PDEs with constant coefficients are established. 
In Section \ref{sec-3}, we present the motivation and the formulation
of our $C^0$ discontinuous finite element method for problem \eqref{problem}.
Mimicking the PDE analysis from \cite{Gilbarg_Trudinger01} at the discrete level, 
we prove a discrete $W^{2,p}$ stability estimate for the discretization operator. 
In addition, we derive an optimal order error estimate in a discrete $W^{2,p}$-norm. 
Finally, in Section \ref{sec-4}, we give several numerical experiments 
which test the performance of the proposed $C^0$ DG finite element method and 
validate the convergence theory.

\section{Notation and preliminary results}\label{sec-2}

\subsection{Mesh and space notation}\label{sec-2.1}
Let $\Ome\subset \mathbb{R}^n$ be an bounded open domain. We shall use $D$ to denote a generic subdomain 
of $\Ome$ and $\p D$ denotes its boundary. $W^{s,p}(D)$ denotes the standard Sobolev spaces for $s\geq 0$ 
and $1\leq p\leq \infty$, $W^{0,p}(D)=L^p(D)$ and $W^{s,p}_0(\Ome)$ to denote the subspace 
of $W^{s,p}(\Ome)$ consisting functions whose traces vanish up to order $s-1$ on $\p\Ome$. 
$(\cdot, \cdot)_D$ denotes the standard inner product on $L^2(D)$ and $(\cdot, \cdot):=(\cdot,\cdot)_\Ome$.  
To avoid the proliferation of constants, we shall use the notation $a\les b$ to represent the 
relation $a\le Cb$ for some constant $C>0$ independent of mesh size $h$. 

Let $\mct:=\cTho$ be a quasi-uniform, simplical, and conforming triangulation 
of the domain $\Ome$.  Denote by $\mcei$ the set of interior edges in $\mct$, $\mceb$ the set 
of boundary edges in $\mct$, and $\mce = \mcei\cup \mceb$, the set of all edges in $\mct$.
We define the jump and average of a vector function $\bw$ 
on an interior edge $e=\p T^+\cap \p T^-$ as follows:
\begin{align*}
\jump{\bw}\big|_e&=\bw^{+}\cdot \bn_+ \big|_e+\bw^{-}\cdot \bn_-\big|_e, \\
\avg{\bw}\big|_e&=\frac12\Bigl(\bw^{+}\cdot \bn_+ \big|_e -\bw^{-}\cdot \bn_-\big|_e \Bigr), 
\end{align*}
where $\bw^{\pm}=\bw\big|_{T^{\pm}}$ and $\bn_\pm$ is the outward
unit normal of $T^\pm$.  

For a normed linear space $X$, we denote by $X^*$ its dual and $\bl\cdot,\cdot\br$ the pairing 
between $X^*$ and $X$. The Lagrange finite element space with respect to the triangulation is given by
\begin{align} \label{VhDef}
V_h :=\bigl\{v_h\in H^1_0(\Omega):\ v_h|_T\in \mathbb{P}_k(T)\ \forall T\in \mct \bigr\},
\end{align}
where $\mathbb{P}_k(T)$ denotes the set of polynomials with total degree not exceeding $k\  (\ge 1)$ on $T$. 
We also define the piecewise Sobolev space with respect to the mesh $\mct$
\begin{alignat*}{2}
&W^{s,p}(\mct) := \prod_{T\in \mct} W^{s,p}(T),  &&\qquad W^{(p)}_h:=W^{2,p}(\mct)\cap W_0^{1,p}(\Omega),  \\
&L^p_h(\mct):= \prod_{T\in \mct} L^p(T), &&\quad W^{s,p}_h(D) := W^{s,p}(\mct)\big|_D, 
\qquad L^p_h(D):=L^p(\mct)\big|_D.
\end{alignat*}
For a given subdomain $D\subseteq \Ome$, we also define $V_h(D)\subseteq V_h$ 
and $W_h^{(p)}(D)\subseteq W_h^{(p)}$ as the subspaces that vanish outside of $D$ by
\begin{align*}
V_h(D):&= \big\{v\in V_h;\, v|_{\Omega\backslash D}=0\big\},\qquad
W_h^{(p)}(D):= \big\{v\in W_h^{(p)};\ v|_{\Omega\backslash D}=0\big\}.
\end{align*}
We note that $V_h(D)$ is non-trivial for ${\rm diam}(D)>2h$.

Associated with $D\subseteq \Ome$, we define a semi-norm on $W^{2,p}_h(D)$ for $1< p<\infty$
\begin{align} \label{H2normB} 
\|v\|_{W^{2,p}_h(D)} &=\|D^2_h v\|_{L^p(D)}
+ \Big(\sum_{e\in \mcei}  h_e^{1-p}\bigl\|\jump{\nab v}\bigr\|_{L^p(e\cap \overline{D})}^p\Big)^{\frac{1}p}.
\end{align}
Here, $D^2_h v\in L^2(\Omega)$ denotes the piecewise Hessian matrix of $v$, i.e., 
$D^2_h v|_T = D^2 v|_T$ for all $T\in \mct$.  

Let $\mathcal{Q}_h: L^p(\Ome)\to V_h$ be the $L^2$ projection defined by
\begin{equation}\label{L2proj}
\bigl( \mathcal{Q}_h w, v_h \bigr) = \bigl(w, v_h \bigr)
\qquad \forall w\in L^2(\Ome),\, v_h \in V_h.
\end{equation}
It is well known that \cite{CT87} $\mathcal{Q}_h$ satisfies
for any $w\in W^{m,p}(\Omega)$ 
\begin{equation}\label{L2projection}
\| \mathcal{Q}_h w \|_{W^{m,p}(\Omega)} \les \|w\|_{W^{m,p}(\Omega)} \qquad m=0,1;\, 1<p<\infty.
\end{equation}

For any domain $D\subseteq\Ome$ and any $w\in L^p_h(D)$, we also introduce the following 
mesh-dependent semi-norm
\begin{align} \label{L2norm}
\|w\|_{L^p_h(D)}:= \sup_{0\neq v_h\in V_h(D)} \frac{\bigl(w, v_h \bigr)_D}{\|v_h\|_{L^{p^\prime}(D)}},
\qquad\mbox{where}\quad \frac{1}{p} +\frac{1}{p'}=1.
\end{align}
By \eqref{L2proj}, it is easy to see that $\|\cdot\|_{L^p_h(D)}$ is a norm on $V_h(D)$.
Moreover by \eqref{L2projection}
\begin{align}\label{norm_equiv}
\|w_h\|_{L^p(\Omega)} 
&= \sup_{v\in L^{p^\prime}(\Omega)} \frac{(w_h,v)}{\|v\|_{L^{p^\prime}(\Omega)}}
= \sup_{v\in L^{p^\prime}(\Omega)} \frac{(w_h,\cQ v)}{\|v\|_{L^{p^\prime}(\Omega)}}\\
&\nonum \les  \sup_{v\in L^{p^\prime}(\Omega)} \frac{(w_h,\cQ v)}{\|\cQ v\|_{L^{p^\prime}(\Omega)}}
 \le \|w_h\|_{L^p_h(\Omega)}\qquad \forall w_h\in V_h.
\end{align}

\subsection{Some basic properties of $W^{(p)}_h$ functions}\label{sec-2.2}

In this subsection we  cite or prove some basic properties of the broken Sobolev functions
in $W^{(p)}_h$, and in particular, for piecewise polynomial functions. These results, which have 
independent interest in themselves, will be used repeatedly in the later sections. We begin with 
citing a familiar trace inequality followed by proving an inverse inequality.

\begin{lem}[\cite{Brenner}]\label{TraceLemma}
For any $T\in \mct$, there holds
\begin{align}\label{TraceLine}
\|v\|_{L^p(\p T)}^p\les \big(h^{p-1}_T \|\nab v\|_{L^p(T)}^p 
+ h_T^{-1} \|v\|^p_{L^p(T)}\big)\qquad \forall v\in W^{1,p}(T)
\end{align}
for any $p\in (1,\infty)$.
Therefore by scaling, there holds
\begin{equation}\label{scalingTrace}
\sum_{e\in \mce^I} h_e \|v\|_{L^p(e\cap \bar{D})}^p \les 
\begin{cases}
\|v\|_{L^p({D})}^p  &\quad \forall v\in V_h(D),\\
\|v\|_{L^p({D})}^p + h^p \|\nabla v\|_{L^p({D})}^p &\quad\forall v\in W^{(p)}_h(D). 
\end{cases}
\end{equation}
\end{lem}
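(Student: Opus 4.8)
The plan is to prove the trace inequality \eqref{TraceLine} first on a reference simplex $\hat T$, then transfer to a general element $T$ by an affine change of variables, and finally deduce \eqref{scalingTrace} by summing over elements and bounding the number of overlaps.

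\textbf{Reference element estimate.} On the fixed reference simplex $\hat T$, the trace operator $W^{1,p}(\hat T)\to L^p(\p\hat T)$ is bounded, so $\|\hat v\|_{L^p(\p\hat T)}\les \|\hat v\|_{W^{1,p}(\hat T)}$ for all $\hat v\in W^{1,p}(\hat T)$; raising to the $p$-th power gives $\|\hat v\|_{L^p(\p\hat T)}^p\les \|\hat v\|_{L^p(\hat T)}^p+\|\hat \nabla\hat v\|_{L^p(\hat T)}^p$. This is the only place where the continuity of the trace map is invoked, and it is completely standard.

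\textbf{Scaling to $T$.} Given $T\in\mct$ with diameter $h_T$, let $F_T:\hat T\to T$ be the affine map and set $\hat v=v\circ F_T$. Since $\mct$ is quasi-uniform (hence shape-regular), the Jacobian of $F_T$ scales like $h_T^n$ and $|\nabla F_T|\sim h_T$, $|\nabla F_T^{-1}|\sim h_T^{-1}$; similarly the surface measure on $\p T$ scales like $h_T^{n-1}$. Substituting these into the reference estimate and collecting powers of $h_T$ yields \eqref{TraceLine}. I would keep this step brief since it is the routine scaling argument.

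\textbf{Passage to \eqref{scalingTrace}.} Multiply \eqref{TraceLine} by $h_e$ (noting $h_e\sim h_T$ by quasi-uniformity for any $e\subset\p T$) to get $h_e\|v\|_{L^p(e)}^p\les h_T^p\|\nabla v\|_{L^p(T)}^p+\|v\|_{L^p(T)}^p$. Summing over all interior edges $e\in\mcei$ with $e\cap\bar D\neq\emptyset$, and using that each simplex has a bounded number of faces (so each $T$ is counted $O(1)$ times), produces $\sum_{e\in\mcei}h_e\|v\|_{L^p(e\cap\bar D)}^p\les \|v\|_{L^p(D)}^p+h^p\|\nabla v\|_{L^p(D)}^p$, which is the second case. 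For the first case, when $v\in V_h(D)$ is a piecewise polynomial, I would apply a standard inverse inequality $\|\nabla v\|_{L^p(T)}\les h_T^{-1}\|v\|_{L^p(T)}$ on each element to absorb the gradient term, leaving just $\|v\|_{L^p(D)}^p$. One should be slightly careful that $h^p\|\nabla v\|_{L^p(D)}^p$ really is controlled by $\|v\|_{L^p(D)}^p$ here — the inverse estimate applies elementwise and $h\sim h_T$, so this goes through. The only mild subtlety, rather than a genuine obstacle, is bookkeeping with the restriction to $D$: since $e\cap\bar D$ may be a proper subset of $e$, one uses $\|v\|_{L^p(e\cap\bar D)}\le\|v\|_{L^p(e)}$ and, on the right-hand side, only elements $T$ with $T\cap\bar D\neq\emptyset$ contribute, all of which lie in (a neighborhood of) $D$; I would simply note that the norms on the right can be taken over $D$ after observing $v|_{\Omega\setminus D}=0$ for both function classes considered. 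Overall there is no real difficulty — the statement is a textbook-style consequence of the trace theorem, affine scaling, and (for the first case) an inverse inequality.
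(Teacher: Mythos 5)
Your argument is correct and is precisely the standard reference-element/scaling argument that the paper itself relies on: the paper gives no proof, simply citing Brenner--Scott for \eqref{TraceLine} and saying ``by scaling'' for \eqref{scalingTrace}. Your handling of the two cases (inverse estimate to absorb the gradient term for $v\in V_h(D)$, finite overlap of elements, and the vanishing of $v$ outside $D$ to localize the right-hand side) fills in exactly the routine details the authors omit.
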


\begin{lem}\label{inverselem}
For any $v_h\in \vh,\ D\subseteq\Ome$, and $1< p<\infty$, there holds
\begin{align}\label{inverse}
\|v_h\|_{W^{2,p}_h(D)}\les h^{-1}\|v_h\|_{W^{1,p}({D}_h)},
\end{align}
where
\begin{align}\label{Dhdef}
{D}_h = \{x\in \Omega:\ {\rm dist}(x,D)\le h\}.
\end{align}
\end{lem}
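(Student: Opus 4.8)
The plan is to estimate separately the two terms in the definition \eqref{H2normB} of $\|v_h\|_{W^{2,p}_h(D)}$, namely the broken Hessian term $\|D^2_h v_h\|_{L^p(D)}$ and the jump term $\bigl(\sum_{e\in\mcei} h_e^{1-p}\|\jump{\nab v_h}\|_{L^p(e\cap\overline D)}^p\bigr)^{1/p}$. Throughout I will use only the element-wise inverse estimate $\|D^2 v_h\|_{L^p(T)}\les h_T^{-1}\|\nab v_h\|_{L^p(T)}$ — a consequence of the finite dimensionality of $\mathbb{P}_k(T)$ together with an affine scaling to the reference simplex — and the trace inequality \eqref{TraceLine}; in particular the argument does not invoke \eqref{inverse} itself and so is not circular. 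A geometric observation is used repeatedly: by quasi-uniformity $\mathrm{diam}(T)\le h$ for every $T\in\mct$, so if $T\cap D\neq\emptyset$, or if $T$ has an interior face $e$ with $e\cap\overline D\neq\emptyset$, then $T\subseteq D_h$. Thus every element contributing to either term lies inside $D_h$, and since each simplex has exactly $n+1$ faces, the finite-overlap counting below costs only an $n$-dependent constant. Quasi-uniformity also gives $h_T\approx h$ and $h_e\approx h$.

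For the Hessian term I would write $\|D^2_h v_h\|_{L^p(D)}^p=\sum_{T\in\mct}\|D^2 v_h\|_{L^p(T\cap D)}^p\le\sum_{T\cap D\neq\emptyset}\|D^2 v_h\|_{L^p(T)}^p$, apply the element inverse estimate and $h_T\approx h$ to bound this by $\les h^{-p}\sum_{T\cap D\neq\emptyset}\|\nab v_h\|_{L^p(T)}^p$, and then use $T\subseteq D_h$ together with finite overlap to conclude $\|D^2_h v_h\|_{L^p(D)}^p\les h^{-p}\|\nab v_h\|_{L^p(D_h)}^p$.

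For the jump term, on an interior edge $e=\p T^+\cap\p T^-$ with $e\cap\overline D\neq\emptyset$ I would first use $|\jump{\nab v_h}|\le|\nab v_h^+|+|\nab v_h^-|$ pointwise on $e$ to reduce to estimating $\|\nab v_h^\pm\|_{L^p(e)}^p\le\|\nab v_h\|_{L^p(\p T^\pm)}^p$. Applying \eqref{TraceLine} componentwise to $\nab v_h$ on $T^\pm$ and absorbing the resulting $\|D^2 v_h\|_{L^p(T^\pm)}^p$ via the element inverse estimate yields $\|\nab v_h\|_{L^p(\p T^\pm)}^p\les h_{T^\pm}^{-1}\|\nab v_h\|_{L^p(T^\pm)}^p$ (the powers work out since $h_{T^\pm}^{p-1}\cdot h_{T^\pm}^{-p}=h_{T^\pm}^{-1}$). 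Multiplying by $h_e^{1-p}\approx h^{1-p}$ produces the factor $h^{1-p}\cdot h^{-1}=h^{-p}$, so $h_e^{1-p}\|\jump{\nab v_h}\|_{L^p(e\cap\overline D)}^p\les h^{-p}\bigl(\|\nab v_h\|_{L^p(T^+)}^p+\|\nab v_h\|_{L^p(T^-)}^p\bigr)$. Summing over all interior edges $e$ with $e\cap\overline D\neq\emptyset$, using finite overlap and $T^\pm\subseteq D_h$, gives $\sum_{e\in\mcei}h_e^{1-p}\|\jump{\nab v_h}\|_{L^p(e\cap\overline D)}^p\les h^{-p}\|\nab v_h\|_{L^p(D_h)}^p$. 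Combining the two estimates and using $\|\nab v_h\|_{L^p(D_h)}\le\|v_h\|_{W^{1,p}(D_h)}$ yields \eqref{inverse}.

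I do not expect a serious obstacle; the argument is routine. The two points that require care are: (i) keeping the locality bookkeeping honest — verifying via $\mathrm{diam}(T)\le h$ that the sums over elements (resp. edges) meeting $D$ (resp. $\overline D$) are genuinely sums over elements contained in $D_h$, so that exactly the stated neighborhood $D_h$, and not a larger one, appears on the right-hand side; and (ii) combining the trace inequality \eqref{TraceLine} with the single-element inverse estimate in the correct order, so that the $\|D^2 v_h\|$ contribution is absorbed and the overall power of $h$ comes out as $h^{-1}$ rather than $h^{0}$.
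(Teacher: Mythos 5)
Your proof is correct and follows essentially the same route as the paper's: bound the jump term by traces of $\nabla v_h$ on the two adjacent elements via \eqref{TraceLine}, absorb the resulting $\|D^2 v_h\|_{L^p(T)}$ contributions (and the broken Hessian term itself) with the element-wise inverse estimate, and use quasi-uniformity to localize the sums to elements contained in $D_h$. The power bookkeeping $h_e^{1-p}\cdot h_T^{-1}\approx h^{-p}$ matches the paper's displayed computation exactly.
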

\begin{proof}
By \eqref{H2normB}, \eqref{TraceLine} and inverse estimates \cite{Ciarlet78,Brenner}, we have
\begin{align*}
&\|v_h\|_{W^{2,p}_h(D)} 
= \|D^2_h v_h\|_{L^p(D)} + \Big(\sum_{e\in \mce^I} 
h_e^{1-p} \big\|\jump{\nab v_h}\big\|_{L^p(e\cap \bar{D})}^p\Big)^{\frac{1}{p}}\\
&\quad
\les  \|D^2_h v_h\|_{L^p(D)} + \mathop{\sum_{T\in \mct}}_{T\subset {D}_h} 
\Big(h_T^{1-p} \big(h_T^{p-1} \|D^2 v_h\|_{L^p(T)}^p 
+h_T^{-1} \|\nab v_h\|_{L^p(T)}^p\big)\Big)^{\frac{1}{p}}\\
&\quad
\les  h^{-1}\|v_h\|_{W^{1,p}({D}_h)}.
\end{align*}
\hfill
\end{proof}

The next lemma states a very simple fact about the 
discrete $W^{2,p}$ norm  on $W^{2,p}_h(\Ome)$.

\begin{lem}\label{discretenormestimates}
For any $1< p< \infty$, there holds 
\begin{align}\label{normestimate}
&\|\varphi\|_{{W}^{2,p}_h(\Ome)}\le  
\|\varphi\|_{W^{2,p}(\Ome)} \qquad \forall \varphi\in W^{2,p}(\Ome).
\end{align}
\end{lem}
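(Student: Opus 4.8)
The plan is to exploit the fact that a genuine $W^{2,p}(\Ome)$ function has no gradient jumps across interior edges, so that the edge contribution in the discrete norm \eqref{H2normB} simply drops out. First I would record the two elementary inclusions: $W^{2,p}(\Ome)\subseteq W^{2,p}(\mct)$, and the piecewise Hessian $D^2_h\varphi$ coincides with the distributional Hessian $D^2\varphi$ almost everywhere in $\Ome$ (there is no singular part concentrated on the skeleton, precisely because $\varphi$ and $\nab\varphi$ are globally Sobolev). Consequently $\|D^2_h\varphi\|_{L^p(\Ome)}=\|D^2\varphi\|_{L^p(\Ome)}$.

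Next I would show that $\jump{\nab\varphi}\big|_e=0$ for every $e\in\mcei$. Since $\varphi\in W^{2,p}(\Ome)$, each component of $\nab\varphi$ lies in $W^{1,p}(\Ome)$, hence admits a single-valued trace on the hyperplane containing $e$: applying the trace theorem on each of the two simplices $T^\pm$ sharing $e$ and using that both one-sided traces must agree with the global trace of $\nab\varphi$, we get $\nab\varphi^+\big|_e=\nab\varphi^-\big|_e$. With the orientation convention $\bn_-=-\bn_+$ on $e$, the jump becomes $\nab\varphi^+\cdot\bn_+ + \nab\varphi^-\cdot\bn_- = (\nab\varphi^+-\nab\varphi^-)\cdot\bn_+ = 0$, so the entire sum over $\mcei$ in \eqref{H2normB} vanishes.

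Combining the two observations, the definition \eqref{H2normB} collapses to $\|\varphi\|_{W^{2,p}_h(\Ome)}=\|D^2\varphi\|_{L^p(\Ome)}$, and the desired bound follows from $\|D^2\varphi\|_{L^p(\Ome)}\le\|\varphi\|_{W^{2,p}(\Ome)}$, which is immediate from the definition of the full Sobolev norm (the Hessian is one of the terms contributing to $\|\varphi\|_{W^{2,p}(\Ome)}$). There is essentially no genuine obstacle in this lemma; the only point meriting a sentence of care is the no-jump argument, which rests on the standard trace theorem for $W^{1,p}$ functions on Lipschitz subdomains together with the sign convention built into the definition of $\jump{\cdot}$.
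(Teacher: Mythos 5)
Your proof is correct and is precisely the reasoning the paper leaves implicit (the lemma is stated without proof as ``a very simple fact''): the gradient of a $W^{2,p}(\Ome)$ function has a single-valued trace on each interior face, so the jump terms in \eqref{H2normB} vanish and the discrete seminorm reduces to $\|D^2\varphi\|_{L^p(\Ome)}\le\|\varphi\|_{W^{2,p}(\Ome)}$. No issues.
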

%

Next, we state some super-approximation results of the nodal interpolant with respect
to the discrete $W^{2,p}$ semi-norm.  The derivation of the following results is 
standard \cite{Schatz98}, but for completeness we give the proof in Appendix \ref{AppendixA}
\begin{lem}\label{Superlem}
Denote by $I_h:C^0(\overline{\Omega})\to V_h$
 the nodal interpolant onto $V_h$.
Let $\eta \in C^\infty(\Omega)$ with $|\eta|_{W^{j,\infty}(\Omega)}\les d^{-j}$
for $0\le j\le k$.  Then for each $T\in \mct$ with $h\le d\le 1$, there holds
\begin{align}
\label{SuperLine2}
h^m \|\eta v_h-I_h (\eta v_h)\|_{W^{m,p}(D)}
&\les \frac{h}{d}\|v_h\|_{L^p({D}_h)}\qquad \mbox{for } m=0,1,\\
\label{SuperLine1}
\|\eta v_h-I_h (\eta v_h)\|_{W^{2,p}_h(D)}&\les \frac{1}{d^2} \|v_h\|_{W^{1,p}({D}_h)},
\end{align}
Moreover, if $k\ge 2$,  there holds
\begin{align}\label{SuperLine3}
\|\eta v_h-I_h (\eta v_h)\|_{W^{2,p}_h(D)}\les \frac{h}{d^3} \|v_h\|_{W^{2,p}(D_h)}.
\end{align}
Here,  $D\subset {D}_h\subset \Omega$ satisfy the conditions in Lemma {\rm \ref{inverselem}}.
\end{lem}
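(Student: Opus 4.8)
The plan is to reduce everything to local estimates on a single element $T$ and then sum, exploiting the standard super-approximation machinery (as in \cite{Schatz98}). Fix $T\in\mct$ and let $\bar v$ denote the mean of $v_h$ on a suitable neighborhood (or simply subtract a well-chosen constant, which the interpolation operator reproduces). The first and central observation is that, since $I_h$ preserves polynomials of degree $\le k$, for any polynomial $q$ of degree $\le 1$ we have $\eta v_h - I_h(\eta v_h) = (\eta - q)v_h + q v_h - I_h((\eta-q)v_h) - I_h(qv_h)$, and on each $T$ one chooses $q$ to be the linear Taylor polynomial of $\eta$ at the barycenter of $T$, so that $\|\eta - q\|_{W^{j,\infty}(T)}\les h^{2-j} d^{-2}$ for $j=0,1,2$ (using $|\eta|_{W^{j,\infty}}\les d^{-j}$ and $h\le d$). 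Writing $w_h := \eta v_h$, the interpolation error estimate on $T$ gives $\|w_h - I_h w_h\|_{W^{m,p}(T)} \les h^{k+1-m}\|w_h\|_{W^{k+1,p}(T)}$ for $m=0,1$, but since $w_h$ is only piecewise polynomial of degree $k$ this must be applied after the polynomial-subtraction trick: $\|w_h - I_h w_h\|_{W^{m,p}(T)}\les h^{2-m}\|(\eta-q)v_h\|_{W^{2,p}(T)} + \text{(lower order terms involving } q v_h\text{)}$, and then the product rule plus inverse estimates on the finite element function $v_h$ control $\|(\eta-q)v_h\|_{W^{2,p}(T)}$ by $\sum_{j=0}^2 \|\eta-q\|_{W^{j,\infty}(T)}\|v_h\|_{W^{2-j,p}(T)} \les d^{-2}(h^2 + h\cdot h + 1\cdot h^2)\,h^{-2}\|v_h\|_{L^p(T)} \cdot h^{?}$; carefully tracking the powers of $h$ (each derivative on $v_h$ costing $h^{-1}$ by the inverse inequality, each polynomial degree of $\eta-q$ gaining $h$) yields exactly the factor $h d^{-1} h^{-m}\|v_h\|_{L^p(T)}$ claimed in \eqref{SuperLine2}. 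Summing the $p$-th powers over $T\subset D_h$ gives \eqref{SuperLine2}.

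For \eqref{SuperLine1} I would argue similarly but keep one derivative on $v_h$: on each $T$ the $W^{2,p}$ seminorm of the interpolation error is $\les h^{-1}\|w_h - I_h w_h\|_{W^{1,p}(T)}$ by the inverse inequality of Lemma \ref{inverselem} (or its element-local version) applied to the finite element function $w_h - I_h w_h$, together with the jump terms, which are handled by the trace inequality \eqref{TraceLine} and the fact that $\jump{\nabla(\eta v_h - I_h(\eta v_h))}$ can be bounded elementwise. Then from the $m=1$ case bound $h\|w_h - I_hw_h\|_{W^{1,p}(T)} \les (h/d)\|v_h\|_{L^p(T_h)}$ one would naively get $d^{-1}h^{-1}\|v_h\|_{L^p}$, which is too crude by one power of $h/d$; the fix is to not use the crude product bound but rather the refined split above, retaining $\|v_h\|_{W^{1,p}}$ instead of $h^{-1}\|v_h\|_{L^p}$ in the estimate of $\|(\eta-q)v_h\|_{W^{2,p}(T)}$, i.e. $\|(\eta-q)v_h\|_{W^{2,p}(T)} \les d^{-2}\|v_h\|_{W^{1,p}(T)}$ (the worst term being $\|\eta-q\|_{L^\infty}\|D^2 v_h\|_{L^p} \les h^2 d^{-2} h^{-1}\|\nabla v_h\|_{L^p}\les h d^{-2}\|\nabla v_h\|_{L^p}$, and $\|\nabla(\eta-q)\|_{L^\infty}\|\nabla v_h\|_{L^p}\les h d^{-2}\|\nabla v_h\|_{L^p}$, absorbing $h$'s), and then dividing by nothing extra gives the $d^{-2}\|v_h\|_{W^{1,p}(D_h)}$ of \eqref{SuperLine1} after summation. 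Estimate \eqref{SuperLine3}, valid when $k\ge 2$ so that quadratics are preserved, is obtained by subtracting the \emph{quadratic} Taylor polynomial of $\eta$ at the barycenter, which improves $\|\eta - q\|_{W^{j,\infty}(T)}\les h^{3-j}d^{-3}$; repeating the above with this sharper bound and retaining $\|v_h\|_{W^{2,p}(T)}$ throughout produces the extra factor $h/d$, i.e. $\|\eta v_h - I_h(\eta v_h)\|_{W^{2,p}_h(D)}\les h d^{-3}\|v_h\|_{W^{2,p}(D_h)}$.

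The main obstacle — and the place that requires genuine care rather than routine bookkeeping — is the jump term in the $W^{2,p}_h$ seminorm \eqref{H2normB}, namely $\big(\sum_{e} h_e^{1-p}\|\jump{\nabla(\eta v_h - I_h(\eta v_h))}\|_{L^p(e)}^p\big)^{1/p}$: since $\eta$ is smooth, $\jump{\nabla(\eta v_h)}= \eta\jump{\nabla v_h}$ on interior edges (there is no jump contribution from the smooth factor, only the product of $\eta$ restricted to $e$ with the existing jump of $\nabla v_h$, plus jumps coming from the interpolation error $I_h(\eta v_h)$ which is itself continuous so contributes nothing to $\jump{\cdot}$ of itself but does contribute to the difference). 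One must therefore bound $h_e^{1-p}\|\eta\jump{\nabla v_h} - \jump{\nabla I_h(\eta v_h)}\|_{L^p(e)}^p$; the trace inequality \eqref{TraceLine} converts the edge norm into volume norms over the two neighboring elements, and then the already-established volume estimates on $\|\eta v_h - I_h(\eta v_h)\|_{W^{1,p}(T)}$ and $\|\eta v_h - I_h(\eta v_h)\|_{W^{2,p}(T)}$ close the argument — but one has to be attentive that the scaling power $h_e^{1-p}$ combines correctly with the $h_T^{p-1}$ and $h_T^{-1}$ from \eqref{TraceLine} so that no spurious negative power of $h$ survives. Since the details of this edge-term manipulation, together with the Taylor-polynomial subtraction and the inverse estimates, are entirely standard, the full proof is deferred to Appendix \ref{AppendixA}.
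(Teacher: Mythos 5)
Your overall strategy --- elementwise nodal interpolation estimates, exploiting that $v_h|_T\in\mathbb{P}_k(T)$, inverse estimates to trade derivatives of $v_h$ for powers of $h^{-1}$, and the trace inequality \eqref{TraceLine} to reduce the jump contribution in \eqref{H2normB} to volume norms of $\nabla(\eta v_h-I_h(\eta v_h))$ and $D^2(\eta v_h - I_h(\eta v_h))$ --- is exactly the route of the paper's Appendix~\ref{AppendixA}, and you correctly identify the two delicate points (retaining $\|v_h\|_{W^{1,p}}$ rather than $h^{-1}\|v_h\|_{L^p}$ for \eqref{SuperLine1}, and the cancellation of the $h_e^{1-p}$ weight against the trace constants). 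Two things keep your sketch from being a proof. First, the mechanism: the paper never subtracts a Taylor polynomial; it applies $\|w-I_hw\|_{W^{m,p}(T)}\les h^{k+1-m}|w|_{W^{k+1,p}(T)}$ directly to $w=\eta v_h$ and expands $|\eta v_h|_{W^{k+1,p}(T)}$ by Leibniz, where the single term carrying all $k+1$ derivatives on $v_h$ vanishes, leaving $\sum_{j=0}^{k}d^{-p(k+1-j)}|v_h|^p_{W^{j,p}(T)}$, which inverse estimates and $h\le d$ collapse to the stated bounds. Your variant with a \emph{linear} Taylor polynomial $q$ produces $qv_h\in\mathbb{P}_{k+1}(T)$, which $I_h$ does \emph{not} reproduce; the remainder $qv_h-I_h(qv_h)$ that you set aside as ``lower order'' satisfies $\|qv_h-I_h(qv_h)\|_{W^{m,p}(T)}\les h^{k+1-m}|q|_{W^{1,\infty}}|v_h|_{W^{k,p}(T)}\les h^{1-m}d^{-1}\|v_h\|_{L^p(T)}$, i.e.\ it is of exactly the target order in \eqref{SuperLine2}, not lower, and must be estimated (the classical trick subtracts a \emph{constant} precisely so that this term vanishes). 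Second, the explicit ``$h^{?}$'' and the deferral of ``the full proof to Appendix~\ref{AppendixA}'' mean the exponent bookkeeping --- which is the entire content of the lemma --- is not actually carried out; in particular, the role of the hypothesis $k\ge 2$ in \eqref{SuperLine3} (one must be able to keep up to two derivatives on $v_h$ while still having a nonpositive net power of $h$ in every Leibniz term) is asserted but never verified. With these repairs your argument becomes the paper's.
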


To conclude this subsection, we state and prove a discrete Sobolev interpolation estimate.

\begin{lem}\label{DiscreteInterp}
There holds for all $1< p<\infty$,
\begin{align*}
\|\nab w\|_{L^p(\Omega)}^2 \les \|w\|_{L^p(\Omega)} \|w\|_{W^{2,p}_h(\Omega)}
\qquad \forall w\in W_h^{(p)}.
\end{align*}
\end{lem}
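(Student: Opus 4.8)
The plan is to mimic the classical Gagliardo–Nirenberg interpolation inequality $\|\nabla w\|_{L^p}^2 \lesssim \|w\|_{L^p}\|w\|_{W^{2,p}}$ at the discrete level, using integration by parts on each simplex and carefully tracking the jump terms across interior edges that arise because $w\in W_h^{(p)}$ is only piecewise smooth. First I would write, on each $T\in\mathcal T_h$, the identity $\int_T |\nabla w|^{p-2}\nabla w\cdot\nabla w\,dx$ and integrate by parts to move one derivative off $\nabla w$. This produces a volume term of the form $\int_T w\,\nabla\cdot\big(|\nabla w|^{p-2}\nabla w\big)\,dx$, which expands (using $\nabla|\nabla w|^{p-2} = (p-2)|\nabla w|^{p-4}(D^2w\,\nabla w)$) into something bounded by $\int_T |w|\,|\nabla w|^{p-2}|D^2_h w|\,dx$, plus a boundary term $\int_{\partial T} w\,|\nabla w|^{p-2}(\nabla w\cdot\mathbf n)\,ds$. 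Summing over $T$, the volume contributions are handled by Hölder's inequality with exponents $p$, $p/(p-2)$ (on $|\nabla w|^{p-2}$), and $p$ again, giving $\|w\|_{L^p}\|\nabla w\|_{L^p}^{p-2}\|D^2_h w\|_{L^p}$; dividing through by $\|\nabla w\|_{L^p}^{p-2}$ yields exactly the desired bound from the $D^2_h$ part of the energy norm.

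The interior-edge terms require more care, and this is where the jump seminorm in \eqref{H2normB} enters. Because $w\in W_0^{1,p}(\Omega)$ is globally continuous, on an interior edge $e=\partial T^+\cap\partial T^-$ the trace of $w$ is single-valued, so the two boundary contributions combine into $\int_e w\,\jump{|\nabla w|^{p-2}\nabla w}\,ds$; the boundary edges contribute nothing since $w$ vanishes on $\partial\Omega$. I would then bound $\jump{|\nabla w|^{p-2}\nabla w}$ pointwise on $e$ by a quantity controlled by $\jump{\nabla w}$ times an average of $|\nabla w|^{p-2}$ (using that $t\mapsto |t|^{p-2}t$ is locally Lipschitz-like with the elementary inequality $\big||a|^{p-2}a-|b|^{p-2}b\big|\lesssim (|a|+|b|)^{p-2}|a-b|$), reducing the edge term to $\sum_{e}\int_e |w|\,(|\nabla w^+|+|\nabla w^-|)^{p-2}\,|\jump{\nabla w}|\,ds$. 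Applying Hölder on each edge with exponents $p$, $p/(p-2)$, $p$, then the trace inequality \eqref{TraceLine} on each $T$ to pass from $\|w\|_{L^p(\partial T)}$ and $\|\nabla w\|_{L^p(\partial T)}$ back to volume norms (absorbing the $h_T$ powers to match the $h_e^{(1-p)/p}$ weight), and finally the discrete Sobolev/inverse machinery available from Lemma \ref{inverselem} and the trace estimate to control leftover gradient factors, I expect to arrive at a bound of the form $\|w\|_{L^p}\,\|\nabla w\|_{L^p}^{p-2}\,\big(\sum_e h_e^{1-p}\|\jump{\nabla w}\|_{L^p(e)}^p\big)^{1/p}$ plus lower-order terms; dividing by $\|\nabla w\|_{L^p}^{p-2}$ and invoking \eqref{H2normB} finishes the estimate.

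The main obstacle I anticipate is the careful bookkeeping of the mesh-size powers in the edge terms: the trace inequality \eqref{TraceLine} generates both $h_T^{p-1}\|\nabla w\|^p$ and $h_T^{-1}\|w\|^p$ scalings, and one must arrange the Hölder split so that, after weighting by $h_e^{1-p}$ as demanded by the jump seminorm, every factor lands with a nonnegative (in fact, zero or favorably-signed) power of $h$ — otherwise one loses an inverse power of $h$ and the inequality is false. A secondary subtlety is the case $1<p<2$, where $|\nabla w|^{p-2}$ is singular where $\nabla w=0$: there one should either work with a regularization $(|\nabla w|^2+\epsilon)^{(p-2)/2}$ and pass to the limit, or rephrase the pointwise inequality for $|t|^{p-2}t$ accordingly, but this is routine. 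The core of the argument — discrete integration by parts plus the pointwise monotonicity inequality for the $p$-Laplacian-type flux — is robust, and the mesh-power accounting, while delicate, goes through because the jump seminorm is precisely weighted to match what the trace inequality produces.
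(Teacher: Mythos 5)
Your proposal follows essentially the same route as the paper's proof: elementwise integration by parts of $\int_\Omega|\nabla w|^{p-2}\nabla w\cdot\nabla w\,dx$, a H\"older split of the volume term into $\|w\|_{L^p}\|\nabla w\|_{L^p}^{p-2}\|D^2_hw\|_{L^p}$, and a three-factor H\"older/trace estimate of the interior-edge jump term weighted to match the $h_e^{1-p}$ scaling in \eqref{H2normB}. Your explicit use of the monotonicity inequality $\bigl||a|^{p-2}a-|b|^{p-2}b\bigr|\lesssim(|a|+|b|)^{p-2}|a-b|$ just makes precise the pointwise bound the paper applies implicitly in \eqref{DinterpLine2}, so the two arguments coincide.
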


\begin{proof}
Writing $\|\nab w\|_{L^p(\Omega)}^p = \int_\Ome |\nab w|^{p-2} \nab w\cdot \nab w\, dx$
and integrating by parts, we find
\begin{align}\nonum
&\|\nab w \|_{L^p(\Omega)}^p  
 = - \int_\Ome \Big(|\nab w|^{p-2} \Del w
+ (p-2)|\nab w|^{p-4} (D^2_h w \nab w)\cdot \nab w\Big) w\, dx\\
&\nonum\hskip 1in
+ \sum_{e\in \mce^I} \int_e \jump{|\nab w|^{p-2} \nab w} w\, ds\\
&\label{DinterpLine0}
\quad
\les \sum_{T\in \mct} \int_T |\nab w|^{p-2} |D^2_h w| |w|\, dx
+ \sum_{e\in \mce^I} \int_e \jump{|\nab w|^{p-2} \nab w} w\, ds.
\end{align}
To bound the first term in \eqref{DinterpLine0} we apply H\"older's inequality to obtain
\begin{align}\label{DinterpLine1}
\int_\Omega |\nab w|^{p-2} |D^2_h w| |w|\, dx
&\le \big\||\nab w|^{p-2}\big\|_{L^{\frac{p}{p-2}(\Ome)}}\|D^2 w\|_{L^p(\Ome)}\|w\|_{L^p(\Ome)}\\
&\nonum= \|\nab w\|_{L^p(\Ome)}^{p-2} \|D^2_h w\|_{L^p(\Ome)} \|w\|_{L^p(\Ome)}.
\end{align}
Likewise, by Lemma \ref{TraceLemma} we have
\begin{align}\label{DinterpLine2}
&\sum_{e\in \mce^I} \int_e \jump{|\nab w|^{p-2} \nab w} w\, ds\\
&\ \nonum
\le \sum_{e\in \mce^I} \Big(h_e^{\frac{1}{p}}\big\|\jump{\nab w}\big\|_{L^p(e)}\Big)^{p-2} 
\Big(h_e^{\frac{1-p}{p}}\|\jump{\nab w}\|_{L^p(e)}\Big) \Big(h_e^{\frac{1}{p}}\|w\|_{L^p(e)}\Big)\\
&\ \nonum
\les  \|\nab w\|^{p-2}_{L^p(\Ome)} \|w\|_{L^p(\Ome)} 
\Big(\sum_{e\in \mce^I} h_e^{1-p} \big\|\jump{\nab w}\big\|_{L^p(e)}^p\Big)^{\frac{1}{p}}\\
&\ \nonumber
\les  \|\nab w\|^{p-2}_{L^p(\Omega)} \|w\|_{L^p(\Omega)}\|w\|_{W^{2,p}_h(\Omega)}.
\end{align}
Combining \eqref{DinterpLine0}--\eqref{DinterpLine2} 
we obtain the desired result. The proof is complete.
\end{proof}

\subsection{Stability estimates for auxiliary PDEs with constant coefficients}\label{sec-2.3}

In this subsection, we consider a special case of \eqref{problem1} when the coefficient
matrix is a constant matrix, $A(x)\equiv A_0\in \mathbb{R}^{n\times n}$. We introduce
the finite element approximation (or projection) $\Lcaloh$ of $\Lcalo$
on $V_h$ and extend $\Lcaloh$ to the broken Sobolev space $W^{(p)}_h$.  
We then establish some stability results for the operator $\Lcaloh$. 
These 
stability results will play an important role in our convergence analysis of the proposed 
$C^0$ DG finite element method in Section \ref{sec-3}. 

Let $A_0\in \mathbb{R}^{n\times n}$ be a 
positive definite matrix and set
\begin{alignat}{2} \label{mathcalLdef}
\Lcalo w &:=-A_0:D^2 w = -\nab \cdot \(\An\nab w\).
\end{alignat}
The operator $\mathcal{L}_0$ induces the following bilinear form:
\begin{equation}\label{bilinear_form_0}
a_0(w,v):= \bl  \Lcalo w, v \br = \int_\Omega \An \nab w\cdot\nab  v\, dx 
\qquad \forall w,v\in H^1_0(\Ome),
\end{equation} 
and the Lax-Milgram Theorem (cf. \cite{EvansBook}) implies that $\Lcalo^{-1} :H^{-1}(\Omega)\to H^1_0(\Omega)$
exists and is bounded. Moreover, if $\p\Ome\in C^{1,1}$, the Calderon-Zygmund theory
(cf. \cite[Chapter 9]{Gilbarg_Trudinger01}) infers that $\Lcalo^{-1}:L^{p}(\Omega)
\to W^{2,p}(\Omega)\cap W^{1,p}_0(\Omega)$ exists and there holds 
\begin{align}\label{Strongstability_new}
\|\Lcalo^{-1} \varphi\|_{W^{2,p}(\Ome)}\les \|\varphi \|_{L^{p}(\Ome)}
\qquad \forall \varphi\in L^p(\Ome).
\end{align}
Equivalently,
\begin{align}\label{Strongstability}
\|w\|_{W^{2,p}(\Ome)}\les \big\|\Lcalo w \big\|_{L^{p}(\Ome)}
\qquad \forall w\in W^{2,p}\cap W^{1,p}_0(\Ome).
\end{align}

The bilinear form naturally leads to a finite element approximation (or projection) 
of $\mathcal{L}_0$ on $V_h$, that is, we define the operator 
$\Lcaloh: V_h\to V_h$ by
\begin{align} \label{add9c}
\bigl(\Lcaloh w_h,v_h \bigr) :=a_0(w_h,v_h) \qquad\forall v_h, w_h\in V_h.
\end{align}

\begin{remark}
When $A=I$, the identity matrix, $\Lcaloh$ is exactly the finite element 
the discrete Laplacian that is,  $\Lcaloh=-\Delta_h$. By finite element theory {\rm \cite{Brenner}},
we know that $\Lcaloh:V_h\to V_h$ is one-to-one and onto, and therefore $\Lcaloh^{-1}:V_h\to V_h$ exists.
\end{remark}

Recall the following DG integration by parts formula:
\begin{align} \label{add9}
\int_\Ome \tau \cdot \nabla_h v \, dx &=-\int_\Ome (\nabla_h\cdot\tau) v\, dx 
+\sum_{e\in \mathcal{E}_h^I} \Bigl( \int_e \jump{\tau} \avg{v} \, ds \\
&\hskip 0.8in
+\int_e  \avg{\tau}\cdot \jump{v} \, ds \Bigr)
+ \sum_{e\in \mathcal{E}_h^B} \int_e ( \tau \cdot n_e)  v \, ds, \nonumber
\end{align}
which holds for any piecewise scalar--valued function $v$ and vector--valued function
$\tau$.  Here, $\nabla_h$ is defined piecewise, i.e., $\nabla_h|_T= \nabla|_T$
for all $T\in \mct$. For any $w_h, v_h\in V_h$, using \eqref{add9} with
$\tau=A_0\nabla w_h$, we obtain
\begin{align} \label{add9b}
a_0(w_h,v_h) 
=-\int_\Omega \big(A_0:D^2_h w_h\big)v_h\, dx 
+ \sum_{e\in \mce^I} \int_e \jump{A_0\nab w_h}v_h\, ds.
\end{align}
We note that the above new form  of $a_0(\cdot,\cdot)$ is 
not well defined on $H^1_0(\Omega)\times H^1_0(\Omega)$.
However, it is well defined on 
$W^{(p)}_h\times W^{(p^\prime)}_h$ with $\frac{1}{p}+\frac{1}{p^\prime}=1$.
Hence, we can easily extend the domain of the 
operator $\Lcaloh$ to broken Sobolev space $W^{(p)}_h$. 
Precisely, (abusing the notation)
we define $\Lcaloh:W^{(p)}_h\to (W^{(p^\prime)}_h)^*$ to be the operator induced by 
the bilinear form $a_0(\cdot. \cdot)$ on $W^{(p)}_h\times W^{(p^\prime)}_h$, namely,
\begin{alignat}{2} \label{Ldef}
\bl \Lcaloh w,v\br := a_0(w, v)\qquad \forall w\in W^{(p)}_h,\ v\in W^{(p^\prime)}_h.
\end{alignat}

A key ingredient in the convergence analysis of our finite element methods
for PDEs in non--divergence form is to establish  global and local discrete 
Calderon--Zygmund-type estimates similar to \eqref{Strongstability} for $\Lcaloh$.  
These results are presented in the following two lemmas.

\begin{lem}\label{StabilityLem}
There exists $h_0>0$ such that for all $h\in (0,h_0)$ there holds
\begin{align}\label{stability}
\|w_h \|_{W_h^{2,p}(\Ome)} \les  \|\Lcaloh w_h\|_{L^p(\Ome)}\qquad \forall w_h\in V_h.
\end{align}
\end{lem}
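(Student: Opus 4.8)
The plan is to establish the discrete Calderon--Zygmund estimate \eqref{stability} by mimicking, at the finite element level, the classical proof that a constant-coefficient elliptic operator is an isomorphism $W^{2,p}\cap W^{1,p}_0 \to L^p$, combined with a duality/regularity bootstrap that transfers the $L^2$ theory to the $L^p$ scale. First I would record the baseline $p=2$ estimate: since $\Lcaloh$ is (up to the fixed positive-definite matrix $A_0$) the discrete Laplacian, standard finite element theory gives $\|w_h\|_{H^1(\Omega)} \lesssim \|\Lcaloh w_h\|_{H^{-1}(\Omega)}$ and, via the Aubin--Nitsche duality argument together with the elliptic regularity \eqref{Strongstability_new} on the $C^{1,1}$ domain, the ``$W^{2,2}_h$'' bound $\|w_h\|_{W^{2,2}_h(\Omega)} \lesssim \|\Lcaloh w_h\|_{L^2(\Omega)}$. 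The jump term in the $W^{2,p}_h$ norm is controlled here because $\jump{A_0 \nab w_h}$ is exactly what the extended operator \eqref{add9b} sees, so $\|\Lcaloh w_h\|_{L^2_h(\Omega)}$ already encodes both the piecewise Hessian and the flux jumps — this is the point of having introduced the mesh-dependent norm $\|\cdot\|_{L^p_h}$ in \eqref{L2norm} and the comparison \eqref{norm_equiv}.

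Next I would push from $p=2$ to general $1<p<\infty$. The natural route is a duality characterization: by definition of $\|\cdot\|_{L^p_h}$ and the symmetry-like structure of $a_0$, one has $\|\Lcaloh w_h\|_{L^p_h(\Omega)} = \sup_{v_h} a_0(w_h,v_h)/\|v_h\|_{L^{p'}(\Omega)}$, so proving \eqref{stability} is equivalent to an inf--sup (discrete LBB) statement
\[
\inf_{0\neq w_h\in V_h}\ \sup_{0\neq v_h\in V_h}\ \frac{a_0(w_h,v_h)}{\|w_h\|_{W^{2,p}_h(\Omega)}\,\|v_h\|_{W^{2,p'}_h(\Omega)}} \gtrsim 1 ,
\]
modulo reconciling $\|v_h\|_{L^{p'}}$ with $\|v_h\|_{W^{2,p'}_h}$ via the inverse inequality of Lemma \ref{inverselem} and a compactness/test-function argument. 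For the upper range $p\ge 2$ I would interpolate (Riesz--Thorin on the solution operator $\Lcaloh^{-1}$, whose $L^2\to W^{2,2}_h$ and — by a separate argument or the $L^\infty$-type endpoint — boundedness on a second exponent is known) or, more robustly, run the classical freezing/perturbation argument directly: write the action of $\Lcaloh$ on $\eta w_h$ for a smooth cutoff $\eta$, use the super-approximation Lemma \ref{Superlem} to show $I_h(\eta w_h)$ is close to $\eta w_h$ in $W^{2,p}_h$, and bootstrap local estimates into a global one. The case $1<p<2$ then follows by the duality $(L^p_h)^* \sim L^{p'}_h$ together with the $p\ge 2$ result applied to the (formal) adjoint operator, which for the divergence-form $\Lcalo$ is itself up to transposing $A_0$.

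The concrete steps, in order, are: (i) prove the $p=2$ estimate via Aubin--Nitsche and \eqref{Strongstability_new}; (ii) establish a localized version — for a ball $B\subset B'$ and $w_h$ supported appropriately, $\|w_h\|_{W^{2,p}_h(B)} \lesssim \|\Lcaloh w_h\|_{L^p_h(B')} + \|w_h\|_{W^{1,p}_h(B')}$ — using the cutoff/super-approximation machinery of Lemma \ref{Superlem} and the inverse estimate Lemma \ref{inverselem}; (iii) absorb the lower-order $W^{1,p}_h(B')$ term using the interpolation inequality of Lemma \ref{DiscreteInterp} together with a Young's-inequality argument and a covering of $\Omega$ by finitely many such balls, for $h$ small enough that the covering is mesh-compatible; (iv) convert the resulting global $L^p_h$ bound into the stated $L^p$ bound via \eqref{norm_equiv}. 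The main obstacle I anticipate is step (ii)–(iii): unlike the continuous setting, there is no discrete maximum principle and the ``perturbation'' argument must be carried out entirely within the broken norm, so controlling the commutator $[\Lcaloh,\eta]$ and showing that the constant in the covering argument does not blow up as the ball radius shrinks (it must be balanced against $h$, i.e. $h \le d$ with $d$ the ball radius) is the delicate quantitative heart of the proof. A secondary subtlety is that $a_0$ in its extended form \eqref{add9b} is genuinely nonsymmetric as a pairing on $W^{(p)}_h\times W^{(p')}_h$, so the duality argument in step (i) and in the $p<2$ reduction must track the adjoint carefully rather than invoking self-adjointness.
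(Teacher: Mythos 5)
Your outline replaces the paper's short argument with machinery that is either inapplicable or circular for this lemma, and it omits the one ingredient that actually makes the proof work. The paper's proof is a direct comparison with the continuous problem: given $\varphi_h\in V_h$, set $w=\Lcalo^{-1}\varphi_h$ and $w_h=\Lcaloh^{-1}\varphi_h$, note that $w_h$ is the Ritz (elliptic) projection of $w$, use the continuous Calderon--Zygmund estimate \eqref{Strongstability} to get $\|w\|_{W^{2,p}(\Ome)}\les\|\varphi_h\|_{L^p(\Ome)}$, use the known $W^{1,p}$ quasi-optimality of the Ritz projection \cite[Theorem 8.5.3]{Brenner} to get $\|w-w_h\|_{W^{1,p}(\Ome)}\les h\|w\|_{W^{2,p}(\Ome)}$, and then pass to the broken norm via the triangle inequality and the inverse estimate of Lemma \ref{inverselem}. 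This handles all $1<p<\infty$ at once: no $p=2$ base case, no interpolation in $p$, no localization. The $W^{1,p}$ stability of the elliptic projection is precisely what your proposal never invokes, and without it none of your routes closes.

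Concretely: (a) Riesz--Thorin does not interpolate the solution operator into the mesh-dependent space $W^{2,p}_h$ (you would need an endpoint estimate and an interpolation theory for the broken norms, neither of which is available); (b) the freezing/perturbation and covering argument buys nothing for a constant-coefficient operator---there is nothing to freeze---and after the covering you are left with a global G\"arding-type inequality $\|w_h\|_{W^{2,p}_h(\Ome)}\les\|\Lcaloh w_h\|_{L^p_h(\Ome)}+\|w_h\|_{L^p(\Ome)}$; removing the zeroth-order term for general $p$ requires the adjoint/duality argument of Lemma \ref{DualGardingEstimateLemma} and Theorem \ref{MainTheorem}, which in this paper is built \emph{on top of} Lemma \ref{StabilityLem}, so invoking that machinery here is circular. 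Finally, your inf-sup reformulation has the wrong denominator: by \eqref{norm_equiv}, the estimate \eqref{stability} is equivalent to $\sup_{v_h}a_0(w_h,v_h)/\|v_h\|_{L^{p^\prime}(\Ome)}\ges\|w_h\|_{W^{2,p}_h(\Ome)}$, and replacing $\|v_h\|_{L^{p^\prime}(\Ome)}$ by the larger quantity $\|v_h\|_{W^{2,p^\prime}_h(\Ome)}$ produces a strictly weaker statement that does not imply the lemma.
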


\begin{proof}
First note that \eqref{stability} is equivalent to
\begin{align}\label{stability_new}
\big\|\Lcaloh^{-1} \varphi_h \big\|_{W_h^{2,p}(\Ome)} 
\les  \| \varphi_h\|_{L^p(\Ome)}\qquad \forall \varphi_h\in V_h.
\end{align}

For any fixed $\varphi_h\in V_h$, let $w:=
\Lcalo^{-1} \varphi_h\in W^{2,p}(\Ome)\cap W^{1,p}_0(\Ome)$ 
and $w_h:=\Lcaloh^{-1} \varphi_h\in V_h$. Therefore, $w$ and $w_h$, 
respectively, are the solutions of the following two problems:
\begin{align}\label{add4}
a_0(w, v)=(\varphi_h, v) \quad \forall v\in H^1_0(\Omega),
\qquad a_0(w_h, v_h)=(\varphi_h, v_h) \quad \forall v_h\in V_h,
\end{align}
and thus, $w_h$ is the elliptic projection of $w$.

By \eqref{Strongstability} we have 
\begin{equation}\label{add3}
\|w\|_{W^{2,p}(\Ome)} \les \|\varphi_h\|_{L^p(\Ome)}.
\end{equation}
Using well--known $L^p$ finite element estimate results \cite[Theorem 8.5.3]{Brenner},
finite element interpolation theory, and \eqref{add3} we obtain that there exists $h_0>0$ such 
that for all $h\in (0, h_0)$
\begin{align}\label{add5}
\|w-w_h\|_{W^{1,p}(\Ome)} \les \|w-I_h w\|_{W^{1,p}(\Ome)}
\les h \|w\|_{W^{2,p}(\Ome)} \les h \|\varphi_h\|_{L^p(\Ome)}.
\end{align}

It follows from the triangle inequality, an inverse inequality
(see Lemma \ref{inverselem}), the stability of $I_h$, \eqref{add3} and \eqref{add5} that 
\begin{align*}
\|w-w_h\|_{W^{2,p}_h(\Ome)} &\les  \|w- I_hw\|_{W^{2,p}_h(\Ome)} 
+ \|I_hw- w_h\|_{W^{2,p}_h(\Ome)} \\
&\les \|w\|_{W^{2,p}(\Ome)} + h^{-1} \|I_h w-w_h\|_{W^{1,p}(\Ome)} \\
&\les \|\varphi_h\|_{L^p(\Ome)} + h^{-1} \|I_h w-w\|_{W^{1,p}(\Ome)}
+ h^{-1} \|w-w_h\|_{W^{1,p}(\Ome)} \\
&\les \|\varphi_h\|_{L^p(\Ome)}.
\end{align*}
Thus,
\begin{equation*}
\|w_h\|_{W^{2,p}_h(\Ome)}
\les \|w-w_h\|_{W^{2,p}_h(\Ome)} +\|w\|_{W^{2,p}(\Ome)} 
\les  \|\varphi_h\|_{L^p(\Ome)},
\end{equation*}
which yields \eqref{stability_new}, and hence, \eqref{stability}.
\end{proof}

\begin{lem}\label{localstabilitylem}
For $\xn\in \Ome$ and  $R>0$, define
\begin{align} \label{BallRDef}
B_{R}(\xn) := \{x\in \Ome:\ |x-\xn|< R\} \subset \Ome.
\end{align}
Let $R^\prime = R+d$ with $d\ge 2h$.
Then there holds
\begin{align}\label{localstability}
\|w_h\|_{W^{2,p}_h(B_R(\xn))} \les  \big\|\Lcaloh w_h\|_{L^p_h(B_{R^\prime}(\xn))}\qquad \forall w_h\in V_h(B_R(\xn)),
\end{align}

\end{lem}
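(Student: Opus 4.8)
The plan is to obtain \eqref{localstability} by localizing the global discrete Calder\'on--Zygmund estimate of Lemma~\ref{StabilityLem} via a standard cut-off argument. Throughout write $B_R:=B_R(\xn)$ and $B_{R^\prime}:=B_{R^\prime}(\xn)$. The structural fact I would use is that any $w_h\in V_h(B_R)$ vanishes on every element $T$ with $\overline T\not\subseteq\overline{B_R}$, since such a $T$ meets $\Ome\setminus\overline{B_R}$ in an open nonempty set and a polynomial vanishing on a set of positive measure is identically zero; consequently $w_h$, hence $\nab w_h$, is supported in $\bigcup\{\overline T\colon T\in\mct,\ \overline T\subseteq\overline{B_R}\}$. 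Because the seminorm \eqref{H2normB} is monotone in the subdomain we have $\|w_h\|_{W^{2,p}_h(B_R)}\le\|w_h\|_{W^{2,p}_h(\Ome)}$, so in view of Lemma~\ref{StabilityLem} and \eqref{norm_equiv} it is enough to prove that $\|\Lcaloh w_h\|_{L^p_h(\Ome)}\les\|\Lcaloh w_h\|_{L^p_h(B_{R^\prime})}$ for all $w_h\in V_h(B_R)$; and by the definition \eqref{L2norm} of $\|\cdot\|_{L^p_h(\Ome)}$ together with \eqref{add9c}, this reduces to showing
\begin{align*}
a_0(w_h,v_h)\les\|\Lcaloh w_h\|_{L^p_h(B_{R^\prime})}\,\|v_h\|_{L^{p^\prime}(\Ome)}\qquad\forall\,v_h\in V_h.
\end{align*}

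To prove this, fix $v_h\in V_h$ and pick $\eta\in C^\infty(\Ome)$ with $\eta\equiv1$ in a neighbourhood of $\overline{B_R}$, $\operatorname{supp}\eta\subseteq B_{R+d/2}$, and $|\eta|_{W^{j,\infty}(\Ome)}\les d^{-j}$ for $0\le j\le k$. Since $d\ge2h$, the nodal interpolant $I_h(\eta v_h)\in V_h$ is supported in $B_{R+d/2+h}\subseteq B_{R^\prime}$, i.e.\ $I_h(\eta v_h)\in V_h(B_{R^\prime})$; and on every $T$ with $\overline T\subseteq\overline{B_R}$ all Lagrange nodes of $T$ lie where $\eta\equiv1$, so $I_h(\eta v_h)|_T=I_h(v_h)|_T=v_h|_T$. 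Hence $\nab\bigl(v_h-I_h(\eta v_h)\bigr)$ vanishes on the support of $\nab w_h$, which gives the key cancellation $a_0\bigl(w_h,v_h-I_h(\eta v_h)\bigr)=0$. Using this, \eqref{add9c}, and the support of $I_h(\eta v_h)$,
\begin{align*}
a_0(w_h,v_h)&=a_0\bigl(w_h,I_h(\eta v_h)\bigr)=\bigl(\Lcaloh w_h,I_h(\eta v_h)\bigr)_{B_{R^\prime}}\\
&\le\|\Lcaloh w_h\|_{L^p_h(B_{R^\prime})}\,\|I_h(\eta v_h)\|_{L^{p^\prime}(B_{R^\prime})},
\end{align*}
while the triangle inequality, $|\eta|\le1$, and the super-approximation bound \eqref{SuperLine2} (with $m=0$ and exponent $p^\prime$) give
\begin{align*}
\|I_h(\eta v_h)\|_{L^{p^\prime}(B_{R^\prime})}&\le\|\eta v_h\|_{L^{p^\prime}(\Ome)}+\|\eta v_h-I_h(\eta v_h)\|_{L^{p^\prime}(\Ome)}\\
&\les\bigl(1+\tfrac hd\bigr)\|v_h\|_{L^{p^\prime}(\Ome)}\les\|v_h\|_{L^{p^\prime}(\Ome)}.
\end{align*}
Combining the last two displays, taking the supremum over $0\ne v_h\in V_h$, and then invoking \eqref{norm_equiv} and Lemma~\ref{StabilityLem} would deliver \eqref{localstability} (for $h$ below the threshold $h_0$ of Lemma~\ref{StabilityLem}).

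I expect the only delicate point to be the support bookkeeping in the cut-off step: the transition layer of $\eta$ has to be wide enough that $\eta\equiv1$ on every element meeting $\overline{B_R}$ (so that $a_0(w_h,\cdot)$ annihilates $v_h-I_h(\eta v_h)$) and at the same time narrow enough that $I_h(\eta v_h)$ remains supported in $B_{R^\prime}$ — which is precisely what the hypothesis $d\ge2h$ guarantees — and one must also check that the constants in \eqref{SuperLine2} are independent of $R$, $d$ and $h$ under the chosen scaling $|\eta|_{W^{j,\infty}}\les d^{-j}$. The remaining steps are a straightforward concatenation of estimates already in hand.
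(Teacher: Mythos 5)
Your proof is correct and follows essentially the same route as the paper: both reduce to the global estimate of Lemma \ref{StabilityLem} via \eqref{norm_equiv} and then localize the test function by interpolating a cut-off of $v_h$ that agrees with $v_h$ on the support of $w_h$ and is supported in $B_{R^\prime}$. The only (harmless) difference is that you use a smooth cut-off $\eta$ together with the super-approximation bound \eqref{SuperLine2}, whereas the paper cuts off with the indicator function of the intermediate ball $B_{(R+R^\prime)/2}$ and uses only the $L^{p^\prime}$-stability of the nodal interpolant.
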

\begin{proof}
To ease notation, set 
$B_R:=B_R(\xn)$ and $B_{R^\prime} :=B_{R^\prime}(\xn)$.
Recalling \eqref{norm_equiv}, we have 
by Lemma \ref{StabilityLem},
\begin{align*}
\|w_h\|_{W^{2,p}_h(B_{R})} 
&= \|w_h\|_{W^{2,p}_h(\Omega)}
\les \|\Lcaloh w_h\|_{L^p(\Omega)} 
\les \|\Lcaloh w_h\|_{L^p_h(\Omega)}
= \sup_{v_h\in V_h} \frac{a_0(w_h,v_h)}{\|v_h\|_{L^{p^\prime}(\Omega)}}.
\end{align*}
Set $R^{\prime\prime} = (R+R^\prime)/2$, so that $R< R''< R'$. Denote by $\ind$ the indicator function 
of $B_{R^{\prime\prime}}:=B_{R^{\prime\prime}}(\xn)$.  Since $w_h = 0$ on $\Omega\backslash B_R$, we have
\begin{align*}
a_0(w_h,v_h) = a_0(w_h,\ind v_h) = a_0(w_h,I_h(\ind v_h))\qquad \forall v_h\in V_h.
\end{align*}
Moreover, we have $I_h (\ind v_h)\in V_h(B_{R^\prime})$ and
\begin{align*}
\|I_h (\ind v_h)\|_{L^{p^\prime}(B_{R^\prime})} = \|I_h (\ind v_h)\|_{L^{p^\prime}(\Omega)}\les \|\ind v_h\|_{L^{p^\prime}(\Omega)}\les \|v_h\|_{L^{p^\prime}(\Omega)}.
\end{align*}
Consequently, 
\begin{align*}
\|w_h\|_{W^{2,p}_h(B_R)} 
&\les \sup_{v_h\in V_h} \frac{a_0(w_h,I_h (\ind v_h))}{\|I_h (\ind v_h)\|_{L^{p^\prime}(B_{R^\prime})}}
\le \sup_{v_h\in V_h(B_{R^\prime})} \frac{a_0(w_h,v_h)}{\|v_h\|_{L^{p^\prime}(B_{R^\prime})}}\\
& = \sup_{v_h\in V_h(B_{R^\prime})} \frac{(\Lcaloh w_h,v_h)}{\|v_h\|_{L^{p^\prime}(B_{R^\prime})}} = \|\Lcaloh w_h\|_{L^p_h(B_{R^\prime})}.
\end{align*}
\hfill
\end{proof}

\section{$C^0$ DG finite element methods and convergence analysis}\label{sec-3}

\subsection{The PDE problem} \label{sec-3.1}
To make the presentation clear, we state the precise assumptions on the non-divergence form PDE 
problem \eqref{problem}.  Let $A\in [C^0(\overline{\Ome})]^{n\times n}$ be a 
positive definite matrix-valued function with
\begin{equation}\label{ellipticity}
\lambda |\xi|^2 A(x) \xi\cdot \xi \leq \Lambda |\xi|^2 \qquad 
\forall \xi\in \mathbb{R}^n,\, x\in \overline{\Ome}
\end{equation}
and constants $0<\lambda\le \Lambda<\infty.$
Under the above assumption, $\mathcal{L}$ is known to be uniformly elliptic,
hence, strong solutions (i.e., $W^{2,p}$ solutions) of problem \eqref{problem}
must satisfy the {\em Aleksandrov maximum principle} 
\cite{Gilbarg_Trudinger01,EvansBook,HanLinBook}. 

By the $W^{2,p}$ theory for the second order non-divergence form uniformly 
elliptic PDEs \cite[Chapter 9]{Gilbarg_Trudinger01}, we know that if 
$\p\Ome\in C^{1,1}$, for any $f\in L^p(\Ome)$ with $1<p<\infty$, there exists 
a unique strong solution $u\in W^{2,p}(\Omega)\cap W^{1,p}_0(\Omega)$ to 
\eqref{problem} satisfying
\begin{equation}\label{CZ_estimate}
\|u\|_{W^{2,p}(\Omega)}\les \|f\|_{L^{p}(\Omega)}.
\end{equation}
Moreover, when $n=2$ and $p=2$, it is also known that 
\cite{Grisvard85,Fromm,Bernstein1910,LadyBook,BabuskaOsborn94}
the above conclusion holds if $\Ome$ is a convex domain.

For the remainder of the paper, we shall always 
assume that  $A\in [C^0(\overline{\Ome})]^{n\times n}$ is 
positive definite satisfying \eqref{ellipticity}, and problem \eqref{problem} has a unique 
strong solution $u$ which satisfies the Calderon-Zygmund estimate \eqref{CZ_estimate}.  

\subsection{Formulation of $C^0$ DG finite element methods}
The formulation of our $C^0$ DG finite element method for non-divergence
form PDEs is relatively simple, which is inspired by the finite element method
for divergence form PDEs and relied only on an unorthodox integration by parts. 

To motivate its derivation, we first look at how one would construct standard finite element 
methods for problem \eqref{problem} when the coefficient matrix 
$A$ belongs to $[C^1(\overline{\Ome})]^{n\times n}$.  In this case, 
since the divergence of $A$  (taken row-wise) is well defined,
we can rewrite the PDE \eqref{problem1} in divergence form as follows:
\begin{align}\label{DivergenceForm}
-\nab \cdot (A\nab u)+(\nab \cdot A)\cdot \nab u =f.
\end{align}
Hence, the original non-divergence form PDE is converted into a 
``diffusion-convection equation" with the ``diffusion coefficient" 
$A$ and the ``convection coefficient" $\nabla \cdot A$.

A standard finite element method for problem \eqref{DivergenceForm} is readily 
defined as seeking $u_h\in V_h$ such that 
\begin{align}\label{SFEM}
\int_\Omega (A\nab u_h)\cdot \nab v_h\, dx 
+\int_\Omega (\nab \cdot A)\cdot \nab u_h v_h\, dx 
= \int_\Omega f v_h\, dx\quad \forall v_h\in V_h.
\end{align}

Now come back to the case where $A$ only belongs to
$[C^0(\overline{\Omega})]^{n\times n}$. In our setting,
the formulation \eqref{SFEM} is not viable any more because $\nabla\cdot A$ 
does not exist as a function. 
To circumvent this issue, we apply the DG integration by parts formula \eqref{add9}
to the first term on the left-hand side of \eqref{SFEM} with $\tau=A\nabla u_h$
and $\nabla$ in \eqref{SFEM} is understood piecewise, we get
\begin{align} \label{IntroMethod}
-\int_\Omega \big(A:D^2_h u_h\big)v_h\, dx 
+ \sum_{e\in \mce^I} \int_e \jump{A\nab u_h}v_h\, ds 
= \int_\Omega fv_h\, dx\qquad \forall v_h\in V_h.
\end{align}
Here we have used the fact that $\jump{v_h}=0$ and $v_h|_{\p \Omega}=0$.

No derivative is taken on $A$ in \eqref{IntroMethod}, so each of the terms 
is well defined on $V_h$. This indeed yields the $C^0$ DG formulation of this paper.

\begin{definition}\label{def1}
The $C^0$ discontinuous Galerkin (DG) finite element method is defined by seeking 
$u_h\in V_h$ such that
\begin{align} \label{MethodShort}
a_h(u_h,v_h) = (f, v_h) \qquad \forall v_h\in V_h,
\end{align}
where
\begin{align}\label{BLF_1}
a_h(w_h,v_h) &:= -\int_\Omega \big(A:D^2_h w_h\big)v_h\, dx 
+ \sum_{e\in \mce^I} \int_e \jump{A\nab w_h}v_h\, ds, \\
(f, v_h) &:= \int_\Omega f v_h\, dx\qquad \forall v_h\in V_h. \label{BLF_2} 
\end{align} 

\end{definition}

A few remarks are given below about the proposed $C^0$ DG finite element method.

\begin{remark}
(a) The above method is also defined for $A\in [L^\infty(\Ome)]^{n\times n}$ and
no {\em a priori} knowledge of the location of the singularities of $A$ are required
in the meshing procedure.

(b) The $C^0$ DG finite element method \eqref{MethodShort} is a primal method 
with the single unknown $u_h$.  It can be implemented on current finite element 
software supporting element boundary integration.

(c) From its derivation we see that  \eqref{MethodShort} is equivalent to the standard 
finite element method \eqref{SFEM} provided $A$ is smooth.  In addition, if $A$ is 
constant then \eqref{MethodShort} reduces to 
\begin{align*}
a_0(u_h,v_h) = (f, v_h) \qquad\forall v_h\in V_h.
\end{align*}
This feature will be crucially used in the convergence analysis later.

(d) In the one-dimensional and piecewise linear case (i.e., $n=1$ and $k=1$), the 
method \eqref{MethodShort} on a uniform mesh $\{x_i\}_{i=1}^N$ is equivalent to
\begin{align*}
A(x_i)\big(-c_{i-1}+2c_i-c_{i+1}\big) = h^2 f(x_i),
\end{align*}
where $u_h = \sum_{i=1}^N c_i \varphi^{(i)}_h$, and $\{\varphi_h^{(i)}\}_{i=1}^N$ 
represents the nodal basis for $V_h$.
\end{remark}

\subsection{Stability analysis and well-posedness theorem} \label{sec-3.3}
As in Section \ref{sec-2.3}, using the bilinear form $a_h(\cdot, \cdot)$ we 
can define the finite element approximation (or projection) $\Lcalh$ of $\Lcal$ on $V_h$,
that is, we define  
$\Lcalh: V_h\to V_h$ by
\begin{align} \label{FEM_operator}
\bigl( \Lcalh w_h,v_h \bigr):=a_h(w_h,v_h) \qquad \forall v_h,w_h \in V_h.
\end{align}
Trivially, \eqref{MethodShort} can be rewritten as: Find $u_h\in V_h$ such that
\[
\bigl( \Lcalh u_h,v_h \bigr)= (f,v_h) \qquad \forall v_h\in V_h.
\]
Similar to the argument for $\Lcaloh$, 
we can  extend the domain of $\Lcalh$ to the broken Sobolev space $W^{(p)}_h$, 
that is, (abusing the notation) we define $\Lcalh: W^{(p)}_h \to (W^{(p^\prime)}_h)^*$ by 
\begin{align} \label{extended_FEM_operator}
\bl \Lcalh w,v\br:=a_h(w,v) \qquad \forall w\in W_h^{(p)},\ v\in W_h^{(p^\prime)}.
\end{align}

The main objective of this subsection is to establish a $W^{2,p}_h$ stability 
estimate for the operator $\Lcalh$ on the finite element space $V_h$.
From this result, the existence, uniqueness and error estimate 
for \eqref{MethodShort} will naturally follow. The stability proof relies 
on several technical estimates which we derive below.
Essentially, the underlying strategy, known as a perturbation argument
in the PDE literature, is to treat the operator $\Lcalh$ locally as a perturbation of
a stable operator with constant coefficients.  The following lemma quantifies this statement.

\begin{lem}\label{operatorDiffForm}
For any $\delta>0$, there exists $R_\delta>0$ and $h_\delta>0$ such that for any 
$x_0\in \Ome$ with $A_0=A(x_0)$ 
\begin{align} \label{continuity}
\|(\Lcalh-\Lcaloh)w\|_{L^p_h(B_{R_\delta}(x_0))}
\les \delta \|w\|_{W^{2,p}_h(B_{R_\delta}(x_0))}\quad \forall w\in W^{(p)}_h,\, 
\forall h\leq h_\delta.
\end{align}
\end{lem}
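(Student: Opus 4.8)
The plan is to exploit the continuity of $A$ together with the explicit two-term structure of both $a_h$ and $a_0$ coming from \eqref{BLF_1} and \eqref{add9b}. Writing $A_0 = A(x_0)$, for any $w\in W^{(p)}_h$ and any $v_h\in V_h(B_{R_\delta}(x_0))$ I would subtract \eqref{add9b} from \eqref{BLF_1} to obtain
\begin{align*}
\bl (\Lcalh-\Lcaloh)w, v_h\br
= -\int_{B_{R_\delta}} \big((A-A_0):D^2_h w\big) v_h\, dx
+ \sum_{e\in \mce^I} \int_{e\cap \overline{B_{R_\delta}}} \jump{(A-A_0)\nab w}\, v_h\, ds,
\end{align*}
where the restriction to $B_{R_\delta}$ and to edges meeting $\overline{B_{R_\delta}}$ is legitimate since $v_h$ is supported there. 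By uniform continuity of $A$ on $\overline{\Ome}$, choose $R_\delta$ so that $|A(x)-A_0|\le \delta$ for all $x\in B_{R_\delta}(x_0)$ (this is where the radius is fixed, independent of $x_0$, using uniform continuity).

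Next I would estimate the two terms separately. For the volume term, H\"older's inequality gives $\int_{B_{R_\delta}} |A-A_0|\,|D^2_h w|\,|v_h|\,dx \le \delta \|D^2_h w\|_{L^p(B_{R_\delta})}\|v_h\|_{L^{p'}(B_{R_\delta})} \le \delta \|w\|_{W^{2,p}_h(B_{R_\delta})}\|v_h\|_{L^{p'}(B_{R_\delta})}$. For the edge term, on each interior edge $e$ one has $\jump{(A-A_0)\nab w} = (A^+-A_0)\nab w^+\cdot\bn_+ + (A^--A_0)\nab w^-\cdot\bn_-$; since $A$ is continuous across $e$ it is tempting to rewrite this as $(A|_e - A_0)\jump{\nab w} + (\text{jump of }A\text{ terms})$, but as $A$ need not be Lipschitz I would instead keep it in the raw form and bound $|\jump{(A-A_0)\nab w}|\le \delta\,(|\nab w^+| + |\nab w^-|)$ pointwise on $e$, since both traces $x\in \bar e$ lie in $B_{R_\delta}$. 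Wait — this bounds by $|\nab w|$ rather than $|\jump{\nab w}|$, which would bring in $\|\nab w\|_{L^p}$, not the $W^{2,p}_h$ seminorm. The correct move is to use continuity of $A$ more carefully: write $\jump{(A-A_0)\nab w} = (A|_e-A_0)\jump{\nab w}$ exactly, because $A$ is single-valued on $e$ and $A_0$ is constant, so the coefficient factors out of the jump. Then $|\jump{(A-A_0)\nab w}|\le \delta\,|\jump{\nab w}|$ on $e$, and by the trace-scaling argument used in \eqref{DinterpLine2} (splitting $h_e$-powers and applying H\"older over edges),
\begin{align*}
\sum_{e} \int_{e\cap \overline{B_{R_\delta}}} |\jump{(A-A_0)\nab w}|\,|v_h|\,ds
\les \delta \Big(\sum_e h_e^{1-p}\|\jump{\nab w}\|_{L^p(e\cap\overline{B_{R_\delta}})}^p\Big)^{1/p}
\Big(\sum_e h_e \|v_h\|_{L^p(e)}^{p'}\Big)^{1/p'},
\end{align*}
and the second factor is $\les \|v_h\|_{L^{p'}(B_{R_\delta})}$ by the trace inequality \eqref{scalingTrace} for $v_h\in V_h$ (the first, $V_h(D)$ case).

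Combining the two bounds, dividing by $\|v_h\|_{L^{p'}(B_{R_\delta})}$ and taking the supremum over $v_h\in V_h(B_{R_\delta})$ in the definition \eqref{L2norm} of $\|\cdot\|_{L^p_h(B_{R_\delta})}$ yields \eqref{continuity}; any $h_\delta>0$ with $h_\delta < R_\delta/2$ suffices so that $V_h(B_{R_\delta})$ is nontrivial and the mesh-dependent norm is meaningful. The main obstacle — and the point that needs care — is precisely the edge term: one must observe that, because $A$ is continuous (hence single-valued on each interior edge) and $A_0$ is constant, the coefficient $A-A_0$ pulls out of the jump $\jump{\cdot}$ cleanly, leaving $(A|_e-A_0)\jump{\nab w}$ whose supremum norm over $e\cap \overline{B_{R_\delta}}$ is controlled by $\delta$; without this observation one is stuck with a stray $\|\nab w\|_{L^p}$ term that is not part of the $W^{2,p}_h$ seminorm. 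Everything else is routine: uniform continuity fixes $R_\delta$ uniformly in $x_0$, and H\"older plus the trace-scaling estimates from Lemma \ref{TraceLemma} handle the rest.
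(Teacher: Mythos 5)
Your proof is correct and follows essentially the same route as the paper's: subtract the two bilinear forms, use uniform continuity of $A$ on $\overline\Omega$ to fix $R_\delta$ uniformly in $x_0$, estimate the volume and edge terms by H\"older together with the trace estimate \eqref{scalingTrace} for $v_h$, and conclude by taking the supremum over $v_h\in V_h(B_{R_\delta}(x_0))$ in \eqref{L2norm}. One precision on the step you rightly flagged as the crux: the identity $\jump{(A-A_0)\nabla w}=(A|_e-A_0)\jump{\nabla w}$ does not typecheck ($A|_e-A_0$ is a matrix while $\jump{\nabla w}$ is a scalar); the correct justification is that, since $w\in W^{(p)}_h\subset W^{1,p}_0(\Omega)$ is continuous across $e$, the tangential part of $\nabla w$ does not jump, so $\nabla w^+-\nabla w^-$ is normal to $e$ and $\jump{(A-A_0)\nabla w}=\jump{\nabla w}\,\bigl((A-A_0)\bn\cdot\bn\bigr)$, which yields exactly the bound $|\jump{(A-A_0)\nabla w}|\le\delta\,|\jump{\nabla w}|$ that your argument (and the paper's, implicitly) requires.
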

\begin{proof}
Since $A$ is continuous on $\overline{\Omega}$, it is uniformly continuous.
Therefore for every $\delta>0$ there exists $R_\delta>0$ such that if $x,y\in \Omega$
satisfy $|x-y|<R_\delta$, there holds $|A(x)-A(y)|<\delta$. Consequently
for any $x_0\in \Ome$
\begin{align}\label{AC101}
\|A-\An\|_{L^\infty(B_{R_\delta})}\le \delta
\end{align}
with $B_{R_\delta}:=B_{R_\delta}(x_0)$.

Set $h_\delta=\min\{h_0, \frac{R_\delta}4\}$ and consider $h\leq h_\delta$,
$w\in W^{(p)}_h$ and $v_h\in V_h(B_{R_\delta})$.
Since $(\Lcaloh-\Lcalh) w\in W^{(p)}_h$, it follows from \eqref{norm_equiv}, \eqref{add9b}, 
\eqref{BLF_1}, \eqref{AC101}, and \eqref{L2projection} that
\begin{align*}
&\bigl( (\Lcaloh-\Lcalh) w, v_h \bigr)\\
&\qquad
=-\int_{B_{R_\delta}} \((\An-A):D^2_h w\)  v_h\, dx 
+\sum_{e\in \mcei} \int_{e\cap \bar{B}_{R_\delta}} \jump{(\An-A) \nab w}{v_h}\, ds\\
&\qquad
\le \|A-\An\|_{L^\infty(B_{R_\delta})}\Bigl( \|D^2_h w\|_{L^p(B_{R_\delta})} \|v_h\|_{L^{p^\prime}(B_{R_\delta})}\\
&\qquad\qquad
+\Bigl({\sum_{e\in \mcei}} h_e^{1-2p} \big\|\jump{\nab w}\big\|_{L^p(e\cap \bar{B}_{R_\delta})}^p\Bigr)^{\frac{1}{p}}
\Bigl({\sum_{e\in \mcei}} h_e\|v_h\|_{L^{p^\prime}(e\cap \bar{B}_{R_\delta})}^{p^\prime}\Bigr)^{\frac{1}{p^\prime}}\Bigr)\\
&\qquad
\les \|A-\An\|_{L^\infty(B_{R_\delta})} \|w\|_{W^{2,p}_h(B_{R_\delta})} \|v_h\|_{L^{p^\prime}(B_{R_\delta})} 
%
\les \delta  \|w\|_{W^{2,p}_h(B_{R_\delta})} \|v_h\|_{L^{p^\prime}(B_{R_\delta})}.
\end{align*}
The desired inequality now follows from the definition of $\|\cdot\|_{L^p_h(B_{R_\delta})}$.
\end{proof}

\begin{lem}\label{LocalNonDivLemma}
There exists $R_1>0$ and $h_1>0$ such that for any $\xn\in \Omega$ 
\begin{align} \label{local_stability}
\|w_h\|_{W^{2,p}_h(B_{R_1}(x_0))}
\les  \|\Lcalh w_h\|_{L^{p}_h(B_{R_2}(x_0))} \quad \forall w_h\in V_h(B_{R_1}(x_0)),\,
\forall h\leq h_1,
\end{align}
with $R_2= 2R_1$.
\end{lem}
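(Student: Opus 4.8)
The plan is to combine the local stability estimate for the constant-coefficient operator (Lemma~\ref{localstabilitylem}) with the perturbation estimate (Lemma~\ref{operatorDiffForm}) via an absorption argument. Fix $x_0\in\Ome$, set $A_0 = A(x_0)$, and let $R_\delta$, $h_\delta$ be the radius and mesh threshold provided by Lemma~\ref{operatorDiffForm} for a value of $\delta$ to be chosen small. The idea is to take $R_1$ small enough (e.g. $R_1 = R_\delta/2$) and $h_1 = h_\delta$ so that both lemmas apply on the relevant balls, with $R_2 = 2R_1 \le R_\delta$.

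First I would write, for any $w_h\in V_h(B_{R_1}(x_0))$, the triangle inequality
\[
\|\Lcaloh w_h\|_{L^p_h(B_{R_2}(x_0))} \le \|\Lcalh w_h\|_{L^p_h(B_{R_2}(x_0))} + \|(\Lcaloh - \Lcalh) w_h\|_{L^p_h(B_{R_2}(x_0))}.
\]
Then apply Lemma~\ref{localstabilitylem} (with $R = R_1$, $R' = R_2 = 2R_1$, which requires $R_1 \ge 2h$, guaranteed by shrinking $h_1$) to bound $\|w_h\|_{W^{2,p}_h(B_{R_1})}$ by a constant times $\|\Lcaloh w_h\|_{L^p_h(B_{R_2})}$, and apply Lemma~\ref{operatorDiffForm} on $B_{R_\delta}\supseteq B_{R_2}$ to control the perturbation term by $C\delta\|w_h\|_{W^{2,p}_h(B_{R_\delta})}$. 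Since $w_h$ vanishes outside $B_{R_1}$, its $W^{2,p}_h$-seminorm over $B_{R_\delta}$ reduces to the seminorm over $B_{R_1}$ (up to including the edges within distance $h$ of $B_{R_1}$, which is why one keeps a slightly larger ball or simply notes $B_{R_1}\subset B_{R_\delta}$ and the seminorm is monotone in the domain). Combining,
\[
\|w_h\|_{W^{2,p}_h(B_{R_1})} \le C_1\|\Lcalh w_h\|_{L^p_h(B_{R_2})} + C_1 C_2 \delta \|w_h\|_{W^{2,p}_h(B_{R_1})}.
\]

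The key step is then the absorption: choose $\delta$ so that $C_1 C_2 \delta \le 1/2$ (this fixes $\delta$, hence $R_\delta$, hence $R_1$ and $h_1$), subtract the last term from the left-hand side, and obtain $\|w_h\|_{W^{2,p}_h(B_{R_1})} \le 2C_1 \|\Lcalh w_h\|_{L^p_h(B_{R_2})}$, which is exactly \eqref{local_stability}. I expect the main obstacle — really a bookkeeping subtlety rather than a deep difficulty — to be ensuring the domain inclusions and the $d\ge 2h$ hypothesis of Lemma~\ref{localstabilitylem} are met simultaneously with $R_2 \le R_\delta$, and handling the fact that the $W^{2,p}_h$-seminorm of $w_h\in V_h(B_{R_1})$ on a larger ball picks up interior edges near $\partial B_{R_1}$; this is resolved by choosing the constants in the right order ($\delta$ first, then $R_\delta$, then $R_1 := R_\delta/2$, then $h_1 := \min\{h_\delta, R_1/2\}$) and by the monotonicity of the seminorm in its domain together with the support property of $w_h$. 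One should also remark that the constants in \eqref{local_stability} are independent of $x_0$, since $R_\delta$ from Lemma~\ref{operatorDiffForm} comes from the \emph{uniform} continuity of $A$ on $\overline{\Ome}$ and is therefore independent of $x_0$, and likewise the constants in Lemmas~\ref{localstabilitylem} and~\ref{StabilityLem} depend only on $\lambda$, $\Lambda$, $n$, $p$, and the shape-regularity of the mesh.
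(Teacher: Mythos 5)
Your proposal is correct and follows essentially the same route as the paper: apply Lemma~\ref{localstabilitylem} with $d=R_1=\tfrac12 R_{\delta_0}$, split $\Lcaloh=\Lcalh+(\Lcaloh-\Lcalh)$ by the triangle inequality, bound the perturbation via Lemma~\ref{operatorDiffForm}, use the support of $w_h$ to replace $\|w_h\|_{W^{2,p}_h(B_{R_2})}$ by $\|w_h\|_{W^{2,p}_h(B_{R_1})}$, and absorb. Your bookkeeping of the order in which $\delta$, $R_\delta$, $R_1$, $h_1$ are fixed, and the remark on uniformity in $x_0$, match the paper's (more tersely written) argument.
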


\begin{proof}
For $\delta_0>0$ to be determined below, let
$R_1=\frac12 R_{{\delta_0}}$ as in Lemma \ref{operatorDiffForm}.
Let $h_1 = \frac{R_1}{2}$
and set $B_i=B_{R_i}(x_0)$.
Then by
Lemmas \ref{localstabilitylem} and \ref{operatorDiffForm} 
with $d=R_1$ and $A_0=A(x_0)$, we have for any $w_h\in V_h(B_1)$
\begin{align*}
\|w_h\|_{W^{2,p}_h(B_1)} &\les  \|\Lcaloh w_h\|_{L^p_h(B_2)}
\le \|(\Lcaloh-\Lcalh) w_h\|_{L^p_h(B_2)}+\|\Lcalh w_h\|_{L^p_h(B_2)} \\
&\les \delta_0 \|w_h\|_{W^{2,p}_h(B_2)} + \|\Lcalh w_h\|_{L^p_h(B_2)}
 = \delta_0 \|w_h\|_{W^{2,p}_h(B_1)} + \|\Lcalh w_h\|_{L^p_h(B_2)}.
\end{align*}
For $\delta_0$ sufficiently small (depending only on $A$), we can kick back 
the first term on the right-hand side.  This completes the proof.
\end{proof}

\begin{lem}\label{LContLemma}
Let $R_1$ and $h_1$ be as in Lemma {\rm\ref{LocalNonDivLemma}}. For any $x_0\in \Ome$, there holds 
\begin{align} \label{reverse_stability}
\|\Lcalh w\|_{L^p_h(B_{R_1}(x_0))}\les  \|w\|_{W^{2,p}_h(B_{R_1}(x_0))} 
\qquad \forall w\in W^{(p)}_h,\, \forall h\leq h_1.
\end{align}
\end{lem}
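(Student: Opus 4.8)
The plan is to prove \eqref{reverse_stability} directly from the definitions, estimating the bilinear form $a_h(w,v_h)$ against the norms appearing on the two sides. By the definition \eqref{L2norm} of $\|\cdot\|_{L^p_h(B_{R_1})}$, it suffices to bound
\[
\bl \Lcalh w, v_h\br = a_h(w,v_h)
= -\int_{B_{R_1}} (A:D^2_h w) v_h\, dx
+ \sum_{e\in \mcei}\int_{e\cap \overline{B}_{R_1}} \jump{A\nab w} v_h\, ds
\]
for an arbitrary $v_h\in V_h(B_{R_1}(x_0))$, showing that it is $\les \|w\|_{W^{2,p}_h(B_{R_1}(x_0))}\|v_h\|_{L^{p'}(B_{R_1}(x_0))}$; here I have used that $v_h$ vanishes outside $B_{R_1}$ so only integrals over $B_{R_1}$ (and edges meeting $\overline{B}_{R_1}$) survive.

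First I would handle the volume term: since $A\in[C^0(\overline\Ome)]^{n\times n}\subset [L^\infty(\Ome)]^{n\times n}$, H\"older's inequality gives
\[
\Bigl|\int_{B_{R_1}} (A:D^2_h w)v_h\, dx\Bigr|
\le \|A\|_{L^\infty(\Ome)} \|D^2_h w\|_{L^p(B_{R_1})}\|v_h\|_{L^{p'}(B_{R_1})}
\les \|w\|_{W^{2,p}_h(B_{R_1})}\|v_h\|_{L^{p'}(B_{R_1})}.
\]
For the edge term I would argue exactly as in the proof of Lemma \ref{operatorDiffForm}: bound $|\jump{A\nab w}|\le \|A\|_{L^\infty}|\jump{\nab w}|$ pointwise on each interior edge, apply H\"older's inequality edgewise with exponents $p$ and $p'$, and then insert the weights $h_e^{(1-p)/p}$ and $h_e^{(p-1)/p}$ to match the norm \eqref{H2normB}:
\[
\sum_{e\in\mcei}\int_{e\cap\overline{B}_{R_1}}\jump{A\nab w}v_h\, ds
\les \Bigl(\sum_{e\in\mcei} h_e^{1-p}\|\jump{\nab w}\|_{L^p(e\cap\overline{B}_{R_1})}^p\Bigr)^{1/p}
\Bigl(\sum_{e\in\mcei} h_e\|v_h\|_{L^{p'}(e\cap\overline{B}_{R_1})}^{p'}\Bigr)^{1/p'}.
\]
The first factor is $\le \|w\|_{W^{2,p}_h(B_{R_1})}$, and the second factor is controlled by the trace/scaling estimate \eqref{scalingTrace} applied to $v_h\in V_h(B_{R_1})$, which gives $\sum_{e} h_e\|v_h\|_{L^{p'}(e\cap\overline{B}_{R_1})}^{p'}\les \|v_h\|_{L^{p'}(B_{R_1})}^{p'}$ (no gradient term appears because $v_h$ is a finite element function vanishing outside $B_{R_1}$). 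Combining the two bounds and dividing by $\|v_h\|_{L^{p'}(B_{R_1}(x_0))}$ yields \eqref{reverse_stability}.

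This lemma is essentially routine — it is the ``easy half'' dual to Lemma \ref{LocalNonDivLemma}, a continuity estimate rather than a stability estimate — so I do not anticipate a genuine obstacle. The only point requiring mild care is the bookkeeping of $h_e$-weights so that the edge sum reproduces exactly the semi-norm \eqref{H2normB} on one side and the plain $L^{p'}$ norm on the other, which is why the choice $d\ge 2h$ (here $h\le h_1=R_1/2$) matters: it guarantees $V_h(B_{R_1})$ is nontrivial and that \eqref{scalingTrace} applies on the relevant edge set. The restriction $h\le h_1$ is inherited from Lemma \ref{LocalNonDivLemma} only for consistency of the statement; the argument above in fact works for all $h$.
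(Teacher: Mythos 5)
Your proposal is correct and follows essentially the same route as the paper's proof: bound $a_h(w,v_h)$ for $v_h\in V_h(B_{R_1})$ by H\"older's inequality on the volume term, a weighted edgewise H\"older's inequality matching the $h_e^{1-p}$ weights of \eqref{H2normB}, and the trace/scaling estimate \eqref{scalingTrace} for the edge norm of $v_h$, then divide by $\|v_h\|_{L^{p'}(B_{R_1})}$. Your closing observations (that this is the routine continuity half and that the restriction $h\le h_1$ is inherited only for consistency) are also accurate.
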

\begin{proof}
Set $B_1=B_{R_1}(x_0)$. By the definition of $\Lcalh$, \eqref{norm_equiv},
\eqref{scalingTrace} and \eqref{L2projection}, we have for any 
$v_h\in V_h(B_1)$ 
\begin{align*}
(\Lcalh w,v) &=-\int_{B_1} (A:D^2_h w) v_h\, dx 
+ \sum_{e\in \mce^I} \int_{e\cap \bar{B}_1} \jump{A\nab w} v_h\, ds\\
&\les \|D^2 _h w\|_{L^p(B_1)} \|v_h\|_{L^{p^\prime}(B_1)} \\
&\qquad
+ \Bigl(\sum_{e\in \mce^I} h_e^{1-p} \big\|\jump{\nab w}\big\|_{L^p(e\cap \bar{B}_1)}^p\Bigr)^{\frac{1}{p}}
\Bigl(\sum_{e\in \mce^I} h_e \|v_h\|_{L^{p^\prime}(e\cap \bar{B}_1)}^{p^\prime}\Bigr)^{\frac{1}{p^\prime}}\\
&\les \Bigl( \|D^2 _h w\|_{L^p(B_1)} 
+ \Big(\sum_{e\in \mce^I} h_e^{1-p} 
\big\|\jump{\nab w}\big\|_{L^p(e)}^p\Big)^{\frac{1}{p}}\Bigr) \|v_h\|_{L^{p^\prime}(B_1)}\\
&\les   \|w\|_{W^{2,p}_h(B_1)} \|v_h\|_{L^{p^\prime}(B_1)}.
\end{align*}
The desired inequality now follows from the definition of $\|\cdot\|_{L^p_h(B_1)}$.\hfill
\end{proof}

\begin{lem}\label{localStabilityLemma}
Let $h_1$ be as in Lemma \ref{LocalNonDivLemma}.  Then there holds for $h\leq h_1$
\begin{align}\label{interiorstability}
\|w_h\|_{W^{2,p}_h(\Omega)}\les \|\Lcalh w_h\|_{L^p(\Omega)}
 + \|w_h\|_{L^p(\Omega)} \qquad \forall w_h\in V_h.
\end{align}
\end{lem}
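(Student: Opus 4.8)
\textbf{Proof plan for Lemma \ref{localStabilityLemma} (global Gårding-type inequality).}

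The plan is to glue together the local stability estimates of Lemma \ref{LocalNonDivLemma} using a finite partition-of-unity/covering argument, paying careful attention to the cut-off functions so that the constants remain mesh-independent. First I would fix the radius $R_1$ and the threshold $h_1$ from Lemma \ref{LocalNonDivLemma}, and choose a finite collection of points $\{x_j\}_{j=1}^N\subset\overline\Ome$ such that the balls $\{B_{R_1}(x_j)\}$ cover $\overline\Ome$, with the dilated balls $\{B_{R_2}(x_j)\}$ ($R_2=2R_1$) having bounded overlap; since $R_1$ depends only on $A$ (through the modulus of continuity) and not on $h$, the number $N$ and the overlap constant are $h$-independent. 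I would then take a smooth partition of unity $\{\eta_j\}$ subordinate to this cover with $|\eta_j|_{W^{m,\infty}}\les R_1^{-m}\les 1$, so that $\sum_j \eta_j\equiv 1$ on $\overline\Ome$ and $\mathrm{supp}\,\eta_j\subset B_{R_1}(x_j)$.

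The core of the argument is to estimate $\|w_h\|_{W^{2,p}_h(\Ome)}$ by localizing. Writing $w_h = \sum_j \eta_j w_h$ is not quite right because $\eta_j w_h\notin V_h$; instead I would use $I_h(\eta_j w_h)\in V_h(B_{R_1}(x_j))$. By the triangle inequality and the fact that $\sum_j I_h(\eta_j w_h) = I_h(\sum_j \eta_j w_h) = I_h(w_h) = w_h$ (using $\sum_j\eta_j\equiv1$ and that $I_h$ reproduces $V_h$), we get
\begin{align*}
\|w_h\|_{W^{2,p}_h(\Ome)} \les \Big(\sum_{j=1}^N \|I_h(\eta_j w_h)\|_{W^{2,p}_h(\Ome)}^p\Big)^{1/p}.
\end{align*}
For each $j$, apply Lemma \ref{LocalNonDivLemma} to $I_h(\eta_j w_h)\in V_h(B_{R_1}(x_j))$ to bound $\|I_h(\eta_j w_h)\|_{W^{2,p}_h(B_{R_1}(x_j))}$ by $\|\Lcalh I_h(\eta_j w_h)\|_{L^p_h(B_{R_2}(x_j))}$. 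Then I would write $\Lcalh I_h(\eta_j w_h) = \Lcalh(\eta_j w_h) + \Lcalh(I_h(\eta_j w_h)-\eta_j w_h)$; the second piece is controlled, via Lemma \ref{LContLemma}, by $\|I_h(\eta_j w_h)-\eta_j w_h\|_{W^{2,p}_h(B_{R_2}(x_j))}$, which by the super-approximation estimate \eqref{SuperLine1} of Lemma \ref{Superlem} is $\les \|w_h\|_{W^{1,p}(B_{R_2}(x_j)_h)}$ — a lower-order term. For the first piece $\Lcalh(\eta_j w_h)$, I would use the product/commutator structure: testing against $v_h\in V_h(B_{R_2}(x_j))$, the quantity $a_h(\eta_j w_h, v_h)$ expands (via the definition \eqref{BLF_1}, the product rule for $D^2_h(\eta_j w_h)$, and the fact that $\jump{\nabla(\eta_j w_h)}=\eta_j\jump{\nabla w_h}$ since $\eta_j$ is smooth) into $a_h(w_h,\eta_j v_h)$ plus commutator terms involving $\nabla\eta_j\cdot\nabla w_h$ and $(D^2\eta_j)w_h$; the leading term is bounded by $\|\Lcalh w_h\|_{L^p_h(B_{R_2}(x_j))}\|v_h\|_{L^{p'}}$ after replacing $\eta_j v_h$ by $I_h(\eta_j v_h)$ (again using super-approximation for the difference), and the commutator terms are bounded by $\|w_h\|_{W^{1,p}(B_{R_2}(x_j)_h)}\|v_h\|_{L^{p'}}$. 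Collecting, summing over $j$, and using the bounded overlap of $\{B_{R_2}(x_j)\}$ yields
\begin{align*}
\|w_h\|_{W^{2,p}_h(\Ome)} \les \|\Lcalh w_h\|_{L^p_h(\Ome)} + \|w_h\|_{W^{1,p}(\Ome)}.
\end{align*}
Finally I would absorb the $W^{1,p}$ term: by the discrete interpolation estimate of Lemma \ref{DiscreteInterp}, $\|\nabla w_h\|_{L^p(\Ome)}^2\les \|w_h\|_{L^p(\Ome)}\|w_h\|_{W^{2,p}_h(\Ome)}$, and Young's inequality lets me hide $\eps\|w_h\|_{W^{2,p}_h(\Ome)}$ on the left at the cost of a $C_\eps\|w_h\|_{L^p(\Ome)}$ term, while $\|\Lcalh w_h\|_{L^p_h(\Ome)}\les\|\Lcalh w_h\|_{L^p(\Ome)}$ since $\Lcalh w_h\in V_h$. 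This gives exactly \eqref{interiorstability}.

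The main obstacle I anticipate is the bookkeeping in the commutator step: handling $\Lcalh(\eta_j w_h)$ cleanly requires carefully tracking the edge-jump contributions and verifying that replacing $\eta_j v_h$ with its interpolant (to stay in $V_h$) produces only super-approximation errors of the right order — this is where the precise scaling in Lemma \ref{Superlem} and the trace inequality \eqref{scalingTrace} must be invoked in concert. A secondary technical point is confirming that the partition of unity can be chosen with the $h$-independent bounds $|\eta_j|_{W^{m,\infty}}\les 1$ required by Lemma \ref{Superlem} (with $d\sim R_1$ fixed); since $R_1$ is fixed once $A$ is given, this is harmless, but it must be stated explicitly so the hypotheses of Lemma \ref{Superlem} are met.
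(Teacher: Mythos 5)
Your proposal is correct and follows essentially the same route as the paper: a local estimate obtained by applying Lemma \ref{LocalNonDivLemma} to $I_h(\eta w_h)$, controlling the interpolation error via Lemmas \ref{LContLemma} and \ref{Superlem}, expanding $\Lcalh(\eta w_h)$ into $(\Lcalh w_h, I_h(\eta v_h))$ plus lower-order commutator terms, then a finite covering argument and absorption of the $W^{1,p}$ term via Lemma \ref{DiscreteInterp} and Young's inequality. The only cosmetic difference is that the paper localizes by taking the cut-off $\eta\equiv 1$ on the inner ball and summing $p$-th powers of $\|w_h\|_{W^{2,p}_h(B_1(x_j))}$ over the cover, rather than decomposing $w_h=\sum_j I_h(\eta_j w_h)$ with a genuine partition of unity; both variants work.
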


\begin{proof}
We divide the proof into two steps.

{\em Step 1}: 
For any $x_0\in \Ome$, let $R_1$ and $h_1$ be as in Lemma \ref{LocalNonDivLemma}, 
let $R_2 = 2R_1$, $R_3 = 3R_1$, and set $B_i=B_{R_i}(x_0)$ for $i=0,1,2$.  
Let $\eta\in C^3(\Omega)$ be a cut-off function satisfying 
\begin{align}\label{etaprop}
0\le \eta\le 1,\quad \eta\big|_{B_1}=1,\quad\eta\big|_{\Ome\backslash B_{2}} 
= 0,\quad \|\eta\|_{W^{m,\infty}(\Omega)}=O(d^{-m})\quad m=0,1,2.
\end{align}

We first note that $\eta w_h\in W^{(p)}_h(B_2)$ and $I_h (\eta w_h)\in V_h(B_3)$ for any $w_h\in V_h$.
Therefore, by Lemmas \ref{Superlem} (with $d=R_1$) and \ref{LocalNonDivLemma}, we have
\begin{align*}
\|w_h\|_{W^{2,p}_h(B_1)}  
&= \|\eta w_h\|_{W^{2,p}_h(B_1)}
\le \|\eta w_h-I_h (\eta w_h)\|_{W^{2,p}_h(B_1)}+\|I_h (\eta w_h)\|_{W^{2,p}_h(B_1)}\\
 &\nonum\les \frac{1}{R_1^{2}} \|w_h\|_{W^{1,p}(B_2)}+ \|I_h (\eta w_h)\|_{W^{2,p}_h(B_1)}\\
 &\nonum\les  \frac{1}{R_1^{2}} \|w_h\|_{W^{1,p}(B_2)} + \|\Lcalh (I_h(\eta w_h))\|_{L^p_h(B_2)}\\
 &\nonum\les \frac{1}{R_1^2} \|w_h\|_{W^{1,p}(B_2)} + \|\Lcalh (\eta w_h)\|_{L^p_h(B_2)}
 +\|\Lcalh (\eta w_h-I_h(\eta w_h))\|_{L^p_h(B_2)}.
 \end{align*}
Applying Lemmas \ref{LContLemma} and \ref{Superlem}, we obtain
\begin{align} \label{LongLine1}
\|w_h\|_{W^{2,p}_h(B_1)}  
&\les \frac{1}{R_1^{2}} \|w_h\|_{W^{1,p}(B_2)} + \|\Lcalh (\eta w_h)\|_{L^p_h(B_2)} \\
&\hskip 1.2in \nonum 
+\|\eta w_h-I_h(\eta w_h)\|_{W^{2,p}_h(B_2)} \\
&\les \frac{1}{R_1^{2}} \|w_h\|_{W^{1,p}(B_3)} +  \|\Lcalh (\eta w_h)\|_{L^p_h(B_3)}.  \nonumber
\end{align}

To derive an upper bound of the last term in \eqref{LongLine1}, we write for $v_h\in V_h(B_3)$,
\begin{align*}
&\bigl(\Lcalh (\eta w_h), v_h\bigr) 
=-\int_{B_3} A:D^2_h (\eta w_h) v_h\, dx 
+ \sum_{e\in \mce^I} \int_{e\cap \bar{B}_3} \jump{A\nab(\eta w_h)} v_h\, ds\\
&=-\int_{B_3} \bigl( \eta A:D_h^2 w_h 
+ 2A\nabla\eta\cdot \nabla w_h + w_h A:D_h^2 \eta \bigr) v_h \, dx 
+\sum_{e\in \mce^I}  \int_{e\cap \bar{B}_3}  \jump{A\nab w_h} \eta  v_h\, ds \nonumber \\
&=\bigl( \Lcalh w_h,I_h (\eta  v_h)\bigr) - \int_{B_3} \bigl( 2A\nabla\eta\cdot \nabla w_h 
+ w_h A:D_h^2 \eta \bigr) v_h \, dx \nonumber \\
&\qquad\nonumber  -\int_{B_3} (A:D^2_h w_h)(\eta v_h- I_h (\eta v_h))\, dx
+\sum_{e\in \mce^I} \int_{e\cap \bar{B}_3} \jump{A\nab w_h} (\eta v_h-I_h (\eta v_h))\, ds.
\end{align*}
By H\"older's inequality, Lemmas \ref{TraceLemma}--\ref{inverselem},  \ref{Superlem},  
and \eqref{etaprop} we obtain
\begin{align*}
&\bigl( \Lcalh (\eta w_h),v_h \bigr)
\les \|\Lcalh w_h\|_{L^p_h(B_3)} \|I_h (\eta v_h)\|_{L^{p^\prime}(B_3)}
+ R_1^{-2}\|w_h\|_{W^{1,p}(B_3)} \|v_h\|_{L^{p^\prime}(B_3)}\\
&\qquad \nonumber 
+ \|w_h\|_{W^{2,p}_h(B_3)}\Bigl(\|\eta v_h-I_h (\eta v_h)\|_{L^{p^\prime}(B_3)}
+ h \|\nab (\eta v_h-I_h (\eta v_h))\|_{L^{p^\prime}(B_3)}\Bigr)\\
&\les \Bigl(\|\Lcalh w_h\|_{L^p_h(B_3)} 
+ \frac{1}{R_1^{2}} \|w_h\|_{W^{1,p}(B_3)} \Bigr)\|v_h\|_{L^{p^\prime}(B_3)},
\end{align*}
which implies that
\[
\|\Lcalh(\eta w_h)\|_{L^p_h(B_3)} \les \|\Lcalh w_h\|_{L^p_h(B_3)} 
+ \frac{1}{R_1^{2}} \|w_h\|_{W^{1,p}(B_3)}.
\]
Applying this upper bound to \eqref{LongLine1} yields 
\begin{align}\label{localestimateinproof}
\|w_h\|_{W^{2,p}_h(B_1)}  \les  \|\Lcalh w_h\|_{L^p_h(B_3)} 
+\frac{1}{R_1^{2}} \|w_h\|_{W^{1,p}(B_3)} \qquad \forall w_h\in V_h.
 \end{align}
 
\smallskip
{\em Step 2}: 
We now use a covering argument to obtain the global estimate \eqref{interiorstability}.
To this end, let $\{x_j\}_{j=1}^N\subset \Omega$ with 
$N=O(R_1^{-n})$ sufficiently large (but independent of $h$) such that 
$\overline{\Omega} = \cup_{j=1}^N \overline{B}_{{R_1}}(x_j)$.
Setting $S_j = B_{{R_1}}(x_j)$ and $\tilde{S}_j=B_{R_2}(x_j) = B_{2R_1}(x_j)$,
we have by \eqref{localestimateinproof} 
\begin{align*}
\|w_h\|_{W^{2,p}_h(\Omega)}^p
&\le \sum_{j=1}^N \|w_h\|^p_{W^{2,p}_h(S_j)}
\les \sum_{j=1}^N \Bigl(\|\Lcalh w_h\|^p_{L^p_h(\tilde{S}_j)} 
+ \frac{1}{R_1^{2p}} \|w_h\|^p_{W^{1,p}(\tilde{S}_j)} \Bigr)\\
&\les  \frac{1}{R_1^{2p}} \|w_h\|_{W^{1,p}(\Omega)}^p
+\sum_{j=1}^N \|\Lcalh w_h\|_{L^p_h(\tilde{S}_j)}^p.
\end{align*}
Since $V_h(\tilde{S}_j)\subseteq V_h$,
we have
\begin{align*}
\sum_{j=1}^N \|\Lcalh w_h\|_{L^p_h(\tilde{S}_j)}^p
&= \sum_{j=1}^N \bigg|\sup_{0\neq v_h\in V_h(\tilde{S}_j)}
\frac{\bigl(\Lcalh w_h,v_h \bigr)}{\|v_h\|_{L^{p^\prime}(\tilde{S}_j)}}\bigg|^p 
=\sum_{j=1}^N \bigg|\sup_{0\neq v_h\in V_h(\tilde{S}_j)}
\frac{\bigl( \Lcalh w_h,v_h \bigr)}{\|v_h\|_{L^{p^\prime}(\Omega)}}\bigg|^p\\
&\le N \bigg| \sup_{0\neq v_h\in V_h} 
\frac{ \bigl( \Lcalh w_h,v_h \bigr) }{\|v_h\|_{L^{p^\prime}(\Ome)}} \bigg|^p 
 \les \frac{1}{R_1^{n}} \|\Lcalh w_h\|_{L^p_h(\Ome)}^p.
 \end{align*}
Consequently, since $R_1$ is independent of $h$, we have
\begin{align*}
\|w_h\|_{W^{2,p}_h(\Omega)} &\les \frac{1}{R_1^{\frac{n}{p}}}  \|\Lcalh w_h\|_{L^p_h(\Omega)} 
+\frac{1}{R_1^{2}}\|w_h\|_{W^{1,p}(\Omega)}
\les \|\Lcalh w_h\|_{L^p_h(\Omega)}+ \|w_h\|_{W^{1,p}(\Omega)}.
\end{align*}

Finally, an application of Lemma \ref{DiscreteInterp} yields
\begin{align*}
\|w_h\|_{W^{2,p}_h(\Omega)}\les  \|\Lcalh w_h\|_{L^p_h(\Omega)} 
+  \|w_h\|^{\frac12}_{L^{p}(\Omega)}\|w_h\|^{\frac12}_{W^{2,p}_h(\Omega)}.
\end{align*}
Applying the Cauchy-Schwarz inequality to the last term completes the proof.
\end{proof}

Using arguments analogous to those in Lemma \ref{localStabilityLemma}, we also 
have the following stability estimate for the formal adjoint operator.
Due to its length and technical nature, we give the proof in the appendix.
\begin{lem}\label{DualGardingEstimateLemma}
There exists an $h_2>0$ such that
\begin{align}\label{DualEstimateLine}
\|v_h\|_{L^{p^\prime}(\Omega)}\les \sup_{0\neq w_h\in V_h} 
\frac{(\Lcalh w_h,v_h)}{\|w_h\|_{W_h^{2,p}(\Omega)}}\qquad \forall v_h\in V_h
\end{align}
provided $h\le h_*:=\min\{h_1,h_2\}$ and $k\ge 2$.
\end{lem}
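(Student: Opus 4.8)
The plan is to adapt the two–step structure of the proof of Lemma~\ref{localStabilityLemma} to the formal adjoint of $\Lcalh$, the new point being that the \emph{local} estimate is obtained from a duality argument against the constant–coefficient discrete operator rather than from a local $W^{2,p}_h$ bound. Write $S(v_h):=\sup_{0\neq w_h\in V_h}a_h(w_h,v_h)/\|w_h\|_{W^{2,p}_h(\Omega)}$ for the quantity on the right–hand side of \eqref{DualEstimateLine}, and for $z\in\overline{\Omega}$ let $\mathcal{L}^{z}_{0,h}$ and $a^{z}_0$ denote the operator \eqref{add9c} and the form \eqref{bilinear_form_0} formed with the frozen matrix $A_0:=A(z)$.

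\emph{Step 1 (local adjoint estimate for compactly supported functions).} I first claim that there exist $\rho_0,\delta_0>0$ depending only on $A$ such that, for every ball $B=B_\rho(z)\subset\Omega$ with $\rho\le\rho_0$ and all sufficiently small $h$, one has $\|v_h\|_{L^{p'}(\Omega)}\les S(v_h)$ for all $v_h\in V_h(B)$. By \eqref{norm_equiv} (with exponents $p',p$) it suffices to bound $(v_h,g_h)$ for $g_h\in V_h$ with $\|g_h\|_{L^{p}(\Omega)}=1$. Let $w_h:=(\mathcal{L}^{z}_{0,h})^{-1}g_h\in V_h$, so $\|w_h\|_{W^{2,p}_h(\Omega)}\les1$ by Lemma~\ref{StabilityLem}; since $(v_h,g_h)=(\mathcal{L}^{z}_{0,h}w_h,v_h)=a^{z}_0(w_h,v_h)$ and $a^{z}_0(w_h,v_h)-a_h(w_h,v_h)=\bigl((\mathcal{L}^{z}_{0,h}-\Lcalh)w_h,v_h\bigr)$,
\begin{align*}
(v_h,g_h)&=a_h(w_h,v_h)+\bigl((\mathcal{L}^{z}_{0,h}-\Lcalh)w_h,v_h\bigr)\\
&\le S(v_h)\,\|w_h\|_{W^{2,p}_h(\Omega)}+\bigl((\mathcal{L}^{z}_{0,h}-\Lcalh)w_h,v_h\bigr),
\end{align*}
and, because $v_h$ vanishes outside $B$, the last term equals its restriction to $B$ and is $\le\|(\mathcal{L}^{z}_{0,h}-\Lcalh)w_h\|_{L^{p}_h(B)}\,\|v_h\|_{L^{p'}(B)}$. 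Choosing $\rho_0$ so small that $B\subset B_{R_{\delta_0}}(z)$ (with $R_{\delta_0},h_{\delta_0}$ as in Lemma~\ref{operatorDiffForm}) and $h\le h_{\delta_0}$, Lemma~\ref{operatorDiffForm} bounds this by $\les\delta_0\|w_h\|_{W^{2,p}_h(\Omega)}\|v_h\|_{L^{p'}(\Omega)}\les\delta_0\|v_h\|_{L^{p'}(\Omega)}$. Taking the supremum over $g_h$ and fixing $\delta_0$ (hence $\rho_0$) small enough to absorb this term proves the claim.

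\emph{Step 2 (globalization by a partition of unity).} For arbitrary $v_h\in V_h$ I again bound $(v_h,g_h)$, $\|g_h\|_{L^{p}(\Omega)}=1$. Cover $\overline{\Omega}$ by balls $B_{\rho_0/4}(x_j)$, $j=1,\dots,N=O(\rho_0^{-n})$, with a subordinate $C^\infty$ partition of unity $\{\zeta_j\}$, $\sum_j\zeta_j\equiv1$, $|\zeta_j|_{W^{m,\infty}(\Omega)}\les\rho_0^{-m}$. Put $g^{(j)}_h:=I_h(\zeta_j g_h)\in V_h$; by \eqref{SuperLine2} ($m=0$) and finite overlap, $\sum_j\|g^{(j)}_h\|_{L^{p}(\Omega)}^{p}\les1$ and $\bigl\|{\textstyle\sum_j}g^{(j)}_h-g_h\bigr\|_{L^{p}(\Omega)}\les h/\rho_0$. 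For each $j$, let $w^{(j)}_h\in V_h(B^{**}_j)$ — with $B^{**}_j$ a fixed dilate of $\operatorname{supp}g^{(j)}_h$ of radius $<R_{\delta_0}$ — solve $a^{x_j}_0(w^{(j)}_h,\psi_h)=(g^{(j)}_h,\psi_h)$ for all $\psi_h\in V_h(B^{**}_j)$; a standard cut–off argument with Lemma~\ref{localstabilitylem} gives $\|w^{(j)}_h\|_{W^{2,p}_h(\Omega)}\les\|g^{(j)}_h\|_{L^{p}(\Omega)}$, and, since $w^{(j)}_h$ and $g^{(j)}_h$ are supported well inside $B^{**}_j$, also $a^{x_j}_0(w^{(j)}_h,v_h)=(g^{(j)}_h,v_h)$. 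With $w_h:=\sum_j w^{(j)}_h\in V_h$ one has $\|w_h\|_{W^{2,p}_h(\Omega)}\les\sum_j\|g^{(j)}_h\|_{L^{p}(\Omega)}\les N^{1/p'}$, and summing the identities,
\begin{align*}
a_h(w_h,v_h)&=\sum_j a^{x_j}_0(w^{(j)}_h,v_h)+\sum_j\bigl((\Lcalh-\mathcal{L}^{x_j}_{0,h})w^{(j)}_h,v_h\bigr)\\
&=(g_h,v_h)+\Bigl({\textstyle\sum_j}g^{(j)}_h-g_h,v_h\Bigr)+\sum_j\bigl((\Lcalh-\mathcal{L}^{x_j}_{0,h})w^{(j)}_h,v_h\bigr).
\end{align*}
The middle term on the right is $\les(h/\rho_0)\|v_h\|_{L^{p'}(\Omega)}$; since $\operatorname{supp}w^{(j)}_h\subset B^{**}_j\subset B_{R_{\delta_0}}(x_j)$, Lemma~\ref{operatorDiffForm} bounds each summand of the last sum by $\les\delta_0\|w^{(j)}_h\|_{W^{2,p}_h(B^{**}_j)}\|v_h\|_{L^{p'}(B^{**}_j)}\les\delta_0\|g^{(j)}_h\|_{L^{p}(\Omega)}\|v_h\|_{L^{p'}(B^{**}_j)}$, which sums (Hölder, finite overlap) to $\les\delta_0\|v_h\|_{L^{p'}(\Omega)}$. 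Hence $(g_h,v_h)\les S(v_h)+(\delta_0+h/\rho_0)\|v_h\|_{L^{p'}(\Omega)}$; taking the supremum over $g_h$, using \eqref{norm_equiv}, fixing $\delta_0$ (hence $\rho_0$) small, and then choosing $h_2\le h_{\delta_0}$ so small that $h/\rho_0$ is negligible for $h\le h_2$, one absorbs the last term and obtains \eqref{DualEstimateLine} for $h\le h_*=\min\{h_1,h_2\}$. The degree restriction $k\ge2$ and the threshold $h\le h_*$ enter through the $W^{2,p}_h$ approximation and stability results invoked (Lemmas~\ref{StabilityLem}, \ref{localstabilitylem}, \ref{Superlem} — in particular \eqref{SuperLine3} — and the smallness conditions of Lemmas~\ref{operatorDiffForm} and \ref{LocalNonDivLemma}).

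\emph{Main obstacle.} The delicate step is Step~2. One cannot localize $v_h$ directly by multiplying by a cut–off and interpolating back into $V_h$: moving the cut–off across the nonsymmetric second–order form $a_h(\cdot,\cdot)$ onto its first argument creates commutator terms of the type $\int(A\nabla\zeta_j)\cdot\nabla w_h\,v_h$ that are of the \emph{same} order as $\|v_h\|_{L^{p'}}$ (rather than a full order lower in $w_h$, as the corresponding commutators are in Lemma~\ref{localStabilityLemma}), and hence cannot be absorbed. The remedy is to localize the dual datum $g_h$ instead, via a genuine partition of unity, and to build a \emph{single} global test function $w_h=\sum_j w^{(j)}_h$ from local discrete solves; the only errors then incurred are an interpolation error controlled by super–approximation (small thanks to its $h/d$ factor) and a coefficient–freezing error controlled by Lemma~\ref{operatorDiffForm} (small by continuity of $A$), so the covering–and–absorption argument of Lemma~\ref{localStabilityLemma} carries over.
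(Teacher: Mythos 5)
Your Step 1 is essentially sound and is in fact a slightly cleaner route than the paper's: you dualize directly against the discrete constant-coefficient operator $(\mathcal{L}^{z}_{0,h})^{-1}g_h$ and invoke Lemmas \ref{StabilityLem} and \ref{operatorDiffForm}, whereas the paper solves the continuous problem $\Lcal\varphi=v_h|v_h|^{p'-2}$ and passes through the elliptic projection. The problem is Step 2. Your construction requires local solves $w^{(j)}_h\in V_h(B^{**}_j)$ that simultaneously (i) have compact support in a ball of radius $<R_{\delta_0}$, (ii) satisfy $a^{x_j}_0(w^{(j)}_h,v_h)=(g^{(j)}_h,v_h)$ for \emph{all} $v_h\in V_h$, and (iii) obey $\|w^{(j)}_h\|_{W^{2,p}_h(\Omega)}\les\|g^{(j)}_h\|_{L^p(\Omega)}$. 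Property (ii) fails: the discrete Dirichlet solve on $B^{**}_j$ gives the identity only against $\psi_h\in V_h(B^{**}_j)$, and a general $v_h\in V_h$ does not vanish on $\p B^{**}_j$; since $\nab w^{(j)}_h$ does not vanish on the layer of elements adjacent to $\p B^{**}_j$ (discrete solutions are not supported ``well inside'' their domain), the boundary contribution $\int A_0\nab w^{(j)}_h\cdot\nab v_h$ over that layer is an uncontrolled $O(1)$ error, not a negligible one. Property (iii) is also not delivered by the cited lemmas: Lemma \ref{localstabilitylem} bounds $\|w_h\|_{W^{2,p}_h(B_R)}$ by a supremum of $a_0(w_h,\cdot)$ over test functions in $V_h(B_{R'})$ with $R'>R$, which is precisely the information the local Dirichlet problem does not provide; a discrete Calderon--Zygmund estimate for the Dirichlet problem on the polytopal set underlying $B^{**}_j$ is not available in the paper and is not obvious. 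Attempting to repair this by a global solve followed by a cut-off reintroduces commutators of exactly the type you flag as fatal, so the gap is structural, not cosmetic.

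Relatedly, your ``Main obstacle'' diagnosis is incorrect, and it is what leads you away from the workable argument. The commutator terms $\int(A\nab\zeta)\cdot\nab w_h\,v_h$ produced by localizing $v_h$ \emph{can} be absorbed: they are one order lower in $v_h$, being bounded by $\|w_h\|_{W^{2,p}_h}\|v_h\|_{W^{-1,p'}}$ (this is the content of the discrete Poincar\'e and H\"older estimates, Lemmas \ref{discretePoincare}--\ref{discreteHolder}). The paper localizes $v_h$, obtains the G\"arding-type inequality $\|v_h\|_{L^{p'}(\Omega)}\les\|\Lcalh^*v_h\|_{W^{-2,p'}_h(\Omega)}+\|v_h\|_{W^{-1,p'}(\Omega)}$, and then removes the lower-order term by a compactness/duality argument based on \cite[Lemma 5]{SchatzWang96}; it is in that final step that the hypotheses $k\ge2$ and $h\le h_2$ genuinely enter. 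Your argument never visibly uses $k\ge 2$, which is a further sign that a step is missing.
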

\begin{remark}
Denote by $\Lcalh^*$ 
 the formal
adjoint operator of $\Lcalh$.  Then inequality \eqref{DualEstimateLine}
is  equivalent to the stability estimate
\begin{align}
\label{AdjointStability}
\|v_h\|_{L^{p^\prime}(\Omega)}
\les \sup_{0\neq w_h\in V_h} \frac{( \Lcalh^* v_h,w_h)}{\|w_h\|_{W_h^{2,p}(\Omega)}}\quad \forall v_h\in V_h.
\end{align}
Thus, the adjoint operator $\Lcalh^*$ is injective on $V_h$.  Since
$V_h$ is finite dimensional, $\Lcalh^*$ on $V_h$ is an isomorphism.
This implies that $\Lcalh$ is also an isomorphism on $V_h$; 
the stability of the operator is addressed in the next theorem, the main
result of this section.
\end{remark}

\begin{thm}\label{MainTheorem}
Suppose that  $h\le \min\{h_1,h_2\}$, and $k\ge 2$.  Then there holds the following stability estimate:
\begin{align}\label{MainStabilityEstimateALine}
\|w_h\|_{W^{2,p}_h(\Omega)}\les \|\Lcalh w_h\|_{L^p_h(\Omega)}\qquad \forall w_h\in V_h.
\end{align}
Consequently, there exists a unique solution to \eqref{MethodShort} satisfying
\begin{align}\label{MethodAPriori}
\|u_h\|_{W^{2,p}_h(\Omega)}\les \|f\|_{L^p(\Omega)}.
\end{align}
\end{thm}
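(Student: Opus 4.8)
The plan is to upgrade the G\"arding-type inequality of Lemma \ref{localStabilityLemma} to the clean stability estimate \eqref{MainStabilityEstimateALine} by absorbing the lower-order term $\|w_h\|_{L^p(\Omega)}$ via a duality argument based on Lemma \ref{DualGardingEstimateLemma}. First I would recall \eqref{norm_equiv}, which gives $\|w_h\|_{L^p(\Omega)}\le \|w_h\|_{L^p_h(\Omega)}$ for $w_h\in V_h$, so it suffices to control $\|w_h\|_{L^p(\Omega)}$ by $\|\Lcalh w_h\|_{L^p_h(\Omega)}$. To do this, fix $w_h\in V_h$ and observe that by the Hahn--Banach/Riesz-type characterization of the $L^p$ norm on the finite-dimensional space $V_h$ (together with \eqref{L2projection}, exactly as in \eqref{norm_equiv}), there is a $v_h\in V_h$ with $\|w_h\|_{L^p(\Omega)} = (w_h, v_h)/\|v_h\|_{L^{p'}(\Omega)}$ — or, more conveniently, I would instead apply Lemma \ref{DualGardingEstimateLemma} directly: since $h\le h_*=\min\{h_1,h_2\}$ and $k\ge 2$, the operator $\Lcalh$ on $V_h$ is an isomorphism (as noted in the remark following Lemma \ref{DualGardingEstimateLemma}), and its inverse satisfies a dual bound.

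The cleanest route: given $w_h\in V_h$, write $\varphi_h := \Lcalh w_h\in V_h$. By Lemma \ref{DualGardingEstimateLemma} applied with the roles set up so that $w_h$ plays the part of the test function, we have for any $v_h \in V_h$,
\begin{align*}
\|v_h\|_{L^{p'}(\Omega)} \les \sup_{0\neq z_h\in V_h} \frac{(\Lcalh z_h, v_h)}{\|z_h\|_{W^{2,p}_h(\Omega)}}.
\end{align*}
Now estimate $\|w_h\|_{L^p(\Omega)}$ by duality: pick $v_h\in V_h$ realizing (up to a constant, via \eqref{norm_equiv} and $L^2$-projection) $\|w_h\|_{L^p(\Omega)}\les (w_h,v_h)/\|v_h\|_{L^{p'}(\Omega)}$. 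Then
\begin{align*}
(w_h, v_h) \le \|w_h\|_{L^p_h(\Omega)}\,\|v_h\|_{L^{p'}(\Omega)},
\end{align*}
but this is circular; instead I would run the duality through the adjoint. Let $z_h\in V_h$ solve $\Lcalh^* z_h = j_p(w_h)$, where $j_p(w_h)\in V_h$ is (the $L^2$-projection of) $|w_h|^{p-2}w_h$ normalized so that $(w_h, j_p(w_h)) = \|w_h\|_{L^p(\Omega)}$ and $\|j_p(w_h)\|_{L^{p'}(\Omega)}\les 1$; such $z_h$ exists since $\Lcalh^*$ is an isomorphism on $V_h$. Then
\begin{align*}
\|w_h\|_{L^p(\Omega)} = (w_h, \Lcalh^* z_h) = (\Lcalh w_h, z_h) \le \|\Lcalh w_h\|_{L^p_h(\Omega)}\,\|z_h\|_{L^{p'}(\Omega)},
\end{align*}
and \eqref{AdjointStability} (equivalently \eqref{DualEstimateLine}) bounds $\|z_h\|_{L^{p'}(\Omega)}\les \sup_{w'_h}(\Lcalh^* z_h, w'_h)/\|w'_h\|_{W^{2,p}_h(\Omega)} = \sup_{w'_h}(w'_h, j_p(w_h))/\|w'_h\|_{W^{2,p}_h(\Omega)} \les \sup_{w'_h}\|w'_h\|_{L^p_h(\Omega)}/\|w'_h\|_{W^{2,p}_h(\Omega)}$, which is $\les 1$ since $\|w'_h\|_{L^p_h(\Omega)} = \|w'_h\|_{L^p(\Omega)}\le \|w'_h\|_{W^{2,p}_h(\Omega)}$. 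Hence $\|w_h\|_{L^p(\Omega)}\les \|\Lcalh w_h\|_{L^p_h(\Omega)}$.

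Plugging this into the G\"arding inequality \eqref{interiorstability} of Lemma \ref{localStabilityLemma} gives $\|w_h\|_{W^{2,p}_h(\Omega)}\les \|\Lcalh w_h\|_{L^p_h(\Omega)} + \|w_h\|_{L^p(\Omega)}\les \|\Lcalh w_h\|_{L^p_h(\Omega)}$, which is \eqref{MainStabilityEstimateALine}. For the consequence \eqref{MethodAPriori}: the estimate \eqref{MainStabilityEstimateALine} shows $\Lcalh: V_h\to V_h$ is injective, hence bijective on the finite-dimensional space $V_h$, so \eqref{MethodShort} has a unique solution $u_h$; applying \eqref{MainStabilityEstimateALine} to $u_h$ and using $\|\Lcalh u_h\|_{L^p_h(\Omega)} = \sup_{v_h}(f,v_h)/\|v_h\|_{L^{p'}(\Omega)}\le \|f\|_{L^p(\Omega)}$ (H\"older) yields \eqref{MethodAPriori}. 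The main obstacle is the careful bookkeeping in the duality step — making sure the dual functional $j_p(w_h)$ can be taken in $V_h$ with the stated normalization and that Lemma \ref{DualGardingEstimateLemma} is applied with matching norms — but no new analytic ingredient beyond Lemmas \ref{localStabilityLemma} and \ref{DualGardingEstimateLemma} is needed.
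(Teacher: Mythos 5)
Your proposal is correct and follows essentially the same route as the paper: solve the discrete adjoint problem with (the projection of) $|w_h|^{p-2}w_h$ as data, bound the adjoint solution in $L^{p'}$ via Lemma \ref{DualGardingEstimateLemma} (using the discrete Poincar\'e/interpolation estimates to pass from $\|v_h\|_{L^p}$ to $\|v_h\|_{W^{2,p}_h}$), test with $w_h$ to get $\|w_h\|_{L^p(\Omega)}\les\|\Lcalh w_h\|_{L^p_h(\Omega)}$, and absorb the lower-order term in the G\"arding inequality of Lemma \ref{localStabilityLemma}. The only cosmetic difference is that you normalize the dual datum upfront while the paper carries the factor $\|w_h\|_{L^p(\Omega)}^{p-1}$ and divides at the end.
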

\begin{proof}
For a given $w_h\in V_h$, Lemma \ref{DualGardingEstimateLemma} guarantees
the existence of a unique $\psi_h\in V_h$ satisfying 
 \begin{align}\label{AdjointProblem}
 ( \Lcalh v_h, \psi_h) = \int_\Omega w_h |w_h|^{p-2} v_h\, dx\qquad \forall v_h\in V_h.
 \end{align}
By \eqref{DualEstimateLine} we have
 \begin{align*}
 \|\psi_h\|_{L^{p^\prime}(\Omega)}\les \sup_{0\neq v_h\in V_h}
 \frac{( \Lcal_hv_h, \psi_h) }{\|v_h\|_{W^{2,p}_h(\Omega)}} 
& =  \sup_{0\neq v_h\in V_h}
 \frac{\int_\Omega w_h |w_h|^{p-2}  v_h\, dx }{\|v_h\|_{W^{2,p}_h(\Omega)}} 
 \les \|w_h\|_{L^p(\Omega)}^{p-1}.
\end{align*} 
The last inequality is an easy consequence of H\"older's inequality, Lemma
\ref{DiscreteInterp} and the Poincar\`e-Friedrichs inequality.
Taking $v_h = w_h$ in \eqref{AdjointProblem}, we have
\begin{align*}
\|w_h\|_{L^p(\Omega)}^p
&= ( \Lcalh w_h,\psi_h)\le \|\Lcalh w_h\|_{L^p_h(\Omega)}\|\psi_h\|_{L^{p^\prime}(\Omega)}
\le \|\Lcalh w_h\|_{L^p_h(\Omega)} \|w_h\|^{p-1}_{L^p(\Omega)},
\end{align*}
and therefore
\[
\|w_h\|_{L^p(\Omega)}\les \|\Lcalh w_h\|_{L^p_h(\Omega)}.
\]
Applying this estimate in \eqref{localStabilityLemma} proves \eqref{MainStabilityEstimateALine}.

Finally, to show existence and uniqueness of the finite element method 
\eqref{MethodShort} it suffices to show the estimate \eqref{MethodAPriori}.  
This immediately follows from \eqref{MainStabilityEstimateALine}
and H\"older's inequality:
\begin{align*}
\|u_h\|_{W^{2,p}_h(\Omega)} &\les \|\Lcalh u_h\|_{L^p_h(\Omega)} 
= \sup_{0\neq v_h\in V_h} \frac{( \Lcalh u_h,v_h)}{\|v_h\|_{L^{p^\prime}(\Omega)}} \\
&= \sup_{0\neq v_h\in V_h}\frac{\int_\Omega f v_h\, dx}{\|v_h\|_{L^{p^\prime}(\Omega)}}\le \|f\|_{L^p(\Omega)}.
\end{align*}\hfill
\end{proof}

\subsection{Convergence analysis}
The stability estimate in Theorem \ref{MainTheorem} immediately
gives us the following error estimate in the $W^{2,p}_h$ semi-norm.
\begin{thm}\label{ErrorEstimateThm1}
Assume that the hypotheses of Theorem {\rm\ref{MainTheorem}}
are satisfied.  Let $u\in W^{2,p}(\Omega)$ and $u_h\in V_h$ denote the solution to 
\eqref{problem} and \eqref{MethodShort}, respectively.  Then there holds
\begin{align}\label{MainThmLine1}
\|u-u_h\|_{W^{2,p}_h(\Omega)}\les \inf_{w_h\in V_h}\|u-w_h\|_{W^{2,p}_h(\Omega)}.
\end{align}
Consequently, if $u\in W^{s,p}(\Omega)$, for some $s\ge 2$, there holds
\begin{align*}
\|u-u_h\|_{W^{2,p}_h(\Omega)}\les h^{\ell-2}\|u\|_{W^{\ell,p}(\Omega)},
\end{align*}
where $\ell = \min\{s,k+1\}$.
\end{thm}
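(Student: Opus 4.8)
The plan is to derive \eqref{MainThmLine1} from the abstract stability estimate of Theorem \ref{MainTheorem} using the standard Strang-type argument, exploiting the fact that the method is consistent because $u$ is a strong (i.e., $W^{2,p}$) solution. First I would pick an arbitrary $w_h\in V_h$ and split the error as $u-u_h=(u-w_h)+(w_h-u_h)$. The first term is already of the desired form, so the work is to control $e_h:=w_h-u_h\in V_h$. Applying \eqref{MainStabilityEstimateALine} to $e_h$ gives $\|e_h\|_{W^{2,p}_h(\Omega)}\les \|\Lcalh e_h\|_{L^p_h(\Omega)}$, so it remains to bound $\|\Lcalh e_h\|_{L^p_h(\Omega)}$ by the best-approximation error.

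The key step is the consistency (Galerkin-type) identity: since $u$ solves \eqref{problem} almost everywhere and $u\in W^{2,p}(\Omega)$, one checks directly from the definition \eqref{BLF_1} that $a_h(u,v_h)=-\int_\Omega (A:D^2 u)v_h\,dx+\sum_{e\in\mcei}\int_e\jump{A\nabla u}v_h\,ds=\int_\Omega f v_h\,dx=(f,v_h)$ for all $v_h\in V_h$; here the jump terms vanish because $\nabla u$ has no jump across interior edges (as $u\in W^{2,p}(\Omega)\subset C^1$ when... more carefully, $\jump{\nabla u}=0$ in the trace sense for $u\in W^{2,p}$, or one simply notes $A:D^2u=f$ a.e.). Comparing with \eqref{MethodShort}, $a_h(u-u_h,v_h)=0$ for all $v_h\in V_h$, hence $a_h(e_h,v_h)=a_h(w_h-u,v_h)$ for all $v_h\in V_h$. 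Therefore, for any $v_h\in V_h(D)$ with $D\subseteq\Omega$,
\begin{align*}
(\Lcalh e_h,v_h)=a_h(e_h,v_h)=a_h(w_h-u,v_h)\les \|w_h-u\|_{W^{2,p}_h(\Omega)}\|v_h\|_{L^{p'}(\Omega)},
\end{align*}
where the last bound follows exactly as in the proof of Lemma \ref{LContLemma} (Cauchy--Schwarz/Hölder on the volume term, and the trace inequality \eqref{scalingTrace} together with the jump definition in \eqref{H2normB} on the edge term). Taking the supremum over $v_h\in V_h$ and recalling the definition \eqref{L2norm} of $\|\cdot\|_{L^p_h(\Omega)}$ yields $\|\Lcalh e_h\|_{L^p_h(\Omega)}\les \|u-w_h\|_{W^{2,p}_h(\Omega)}$. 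Combining with the stability estimate and the triangle inequality, and then taking the infimum over $w_h\in V_h$, gives \eqref{MainThmLine1}.

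For the concrete rate, I would take $w_h=I_hu$ (the nodal interpolant, well defined since $u\in W^{s,p}(\Omega)\subset C^0(\overline\Omega)$ for $s\ge 2$ by Sobolev embedding in the relevant range, or use a quasi-interpolant otherwise) and invoke standard interpolation estimates: $\|u-I_hu\|_{W^{2,p}_h(\Omega)}$ is bounded by $\|D^2_h(u-I_hu)\|_{L^p(\Omega)}$ plus the edge-jump seminorm, and both are $\les h^{\ell-2}\|u\|_{W^{\ell,p}(\Omega)}$ with $\ell=\min\{s,k+1\}$; the edge term is handled via the trace inequality \eqref{TraceLine} applied to $\nabla(u-I_hu)$ on each element, noting $\jump{\nabla u}=0$ so $\jump{\nabla(u-I_hu)}=-\jump{\nabla I_hu}$ and scaling powers of $h_e$ match up. The main (and essentially only) obstacle is making the consistency identity rigorous — specifically justifying that the interior-edge jump terms in $a_h(u,v_h)$ vanish for $u\in W^{2,p}(\Omega)$, which needs either the trace-continuity of $\nabla u$ across faces for $W^{2,p}$ functions or a direct integration-by-parts argument reconciling $a_h(u,v_h)$ with $\int_\Omega fv_h$; everything else is routine best-approximation bookkeeping already packaged in Lemmas \ref{LContLemma}, \ref{TraceLemma}, and standard interpolation theory.
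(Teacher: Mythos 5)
Your proposal is correct and follows essentially the same route as the paper: apply the stability estimate of Theorem \ref{MainTheorem} to $u_h-w_h$, use Galerkin orthogonality $a_h(u-u_h,v_h)=0$ (valid since $\jump{A\nabla u}=0$ on interior edges for $u\in W^{2,p}(\Omega)$, so $a_h(u,v_h)=(f,v_h)$), bound $a_h(u-w_h,v_h)$ by the continuity estimate of Lemma \ref{LContLemma}, and finish with the triangle inequality and standard interpolation. The extra care you take in justifying the consistency identity is a point the paper glosses over, but it is the standard fact that traces of $\nabla u\in W^{1,p}(\Omega)$ agree across interior faces.
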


\begin{proof}
By  Theorem \ref{MainTheorem} and the consistency of the method, we have $\forall v_h\in V_h$
\begin{align*}
\|u_h-w_h\|_{W^{2,p}_h(\Omega)}
&\les \|\Lcalh (u_h-w_h)\|_{L^p_h(\Omega)}
= \sup_{0\neq v_h\in V_h} \frac{(\Lcalh (u_h-w_h),v_h)}{\|v_h\|_{L^{p^\prime}(\Omega)}} \\
&= \sup_{0\neq v_h\in V_h} \frac{a_h(u_h-w_h,v_h)}{\|v_h\|_{L^{p^\prime}(\Omega)}}
= \sup_{0\neq v_h\in V_h} \frac{a_h(u-w_h,v_h)}{\|v_h\|_{L^{p^\prime}(\Omega)}}\\
&= \sup_{0\neq v_h\in V_h} \frac{( \Lcalh (u-w_h),v_h)}{\|v_h\|_{L^{p^\prime}(\Omega)}}
\les  \|u-w_h\|_{W^{2,p}_h(\Omega)}.
\end{align*}
Applying the triangle inequality yields \eqref{MainThmLine1}.
\end{proof}

\section{Numerical experiments}\label{sec-4}
In this section we present several numerical 
experiments to show the efficacy of the finite element
method, as well as to validate the convergence theory.
In addition, we perform numerical experiments
where the coefficient matrix is not continuous and/or degenerate.
While these situations violate some of the assumptions given
in Section \ref{sec-3.1},
the tests show that the finite element method is effective for
these cases as well.

\subsection*{Test 1: H\"older continuous coefficients and smooth solution}
In this test we take $\Omega = (-0.5,0.5)^2$, the coefficient matrix to be
\begin{align*}
A = \begin{pmatrix}
|x|^{1/2}+1 & -|x|^{1/2}\\
-|x|^{1/2} & 5|x|^{1/2}+1
\end{pmatrix}
\end{align*}
and choose $f$ such that $u = \sin(2\pi x_1)\sin(\pi x_2)\exp(x_1\cos(x_2))$ as the exact solution.

The resulting $H^1$ and piecewise $H^2$ errors for various values
of polynomial degree $k$ and discretization parameter $h$are depicted
in Figure \ref{FigureTest1}.  The figure clearly indicates that 
the errors have the following behavior:
\begin{align*}
|u-u_h|_{H^1(\Omega)} = \mathcal{O}(h^k),\quad \|D^2_h(u-u_h)\|_{L^2(\Omega)} = \mathcal{O}(h^{k-1}).
\end{align*}
The second estimate is in agreement with Theorem \ref{ErrorEstimateThm1}.  In addition, 
the numerical experiments suggest that (i) the method converges with optimal order in the $H^1$-norm
and (ii) the method is convergent in the piecewise linear case ($k=1$).

\begin{figure}[htb] 
\centerline{
   \includegraphics[height=2.3in,width=2.5in]{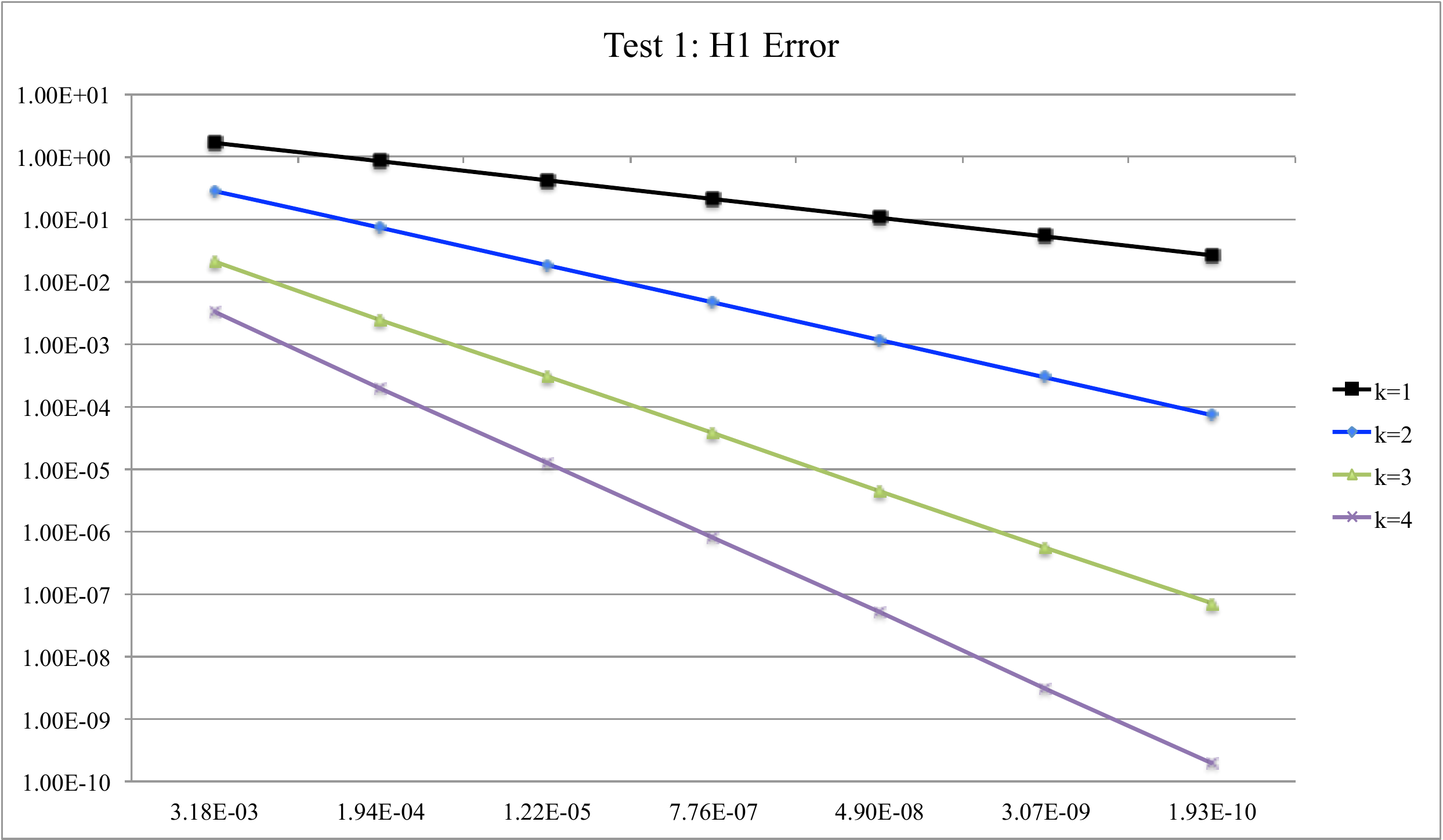} \,\, 
      \includegraphics[height=2.3in,width=2.5in]{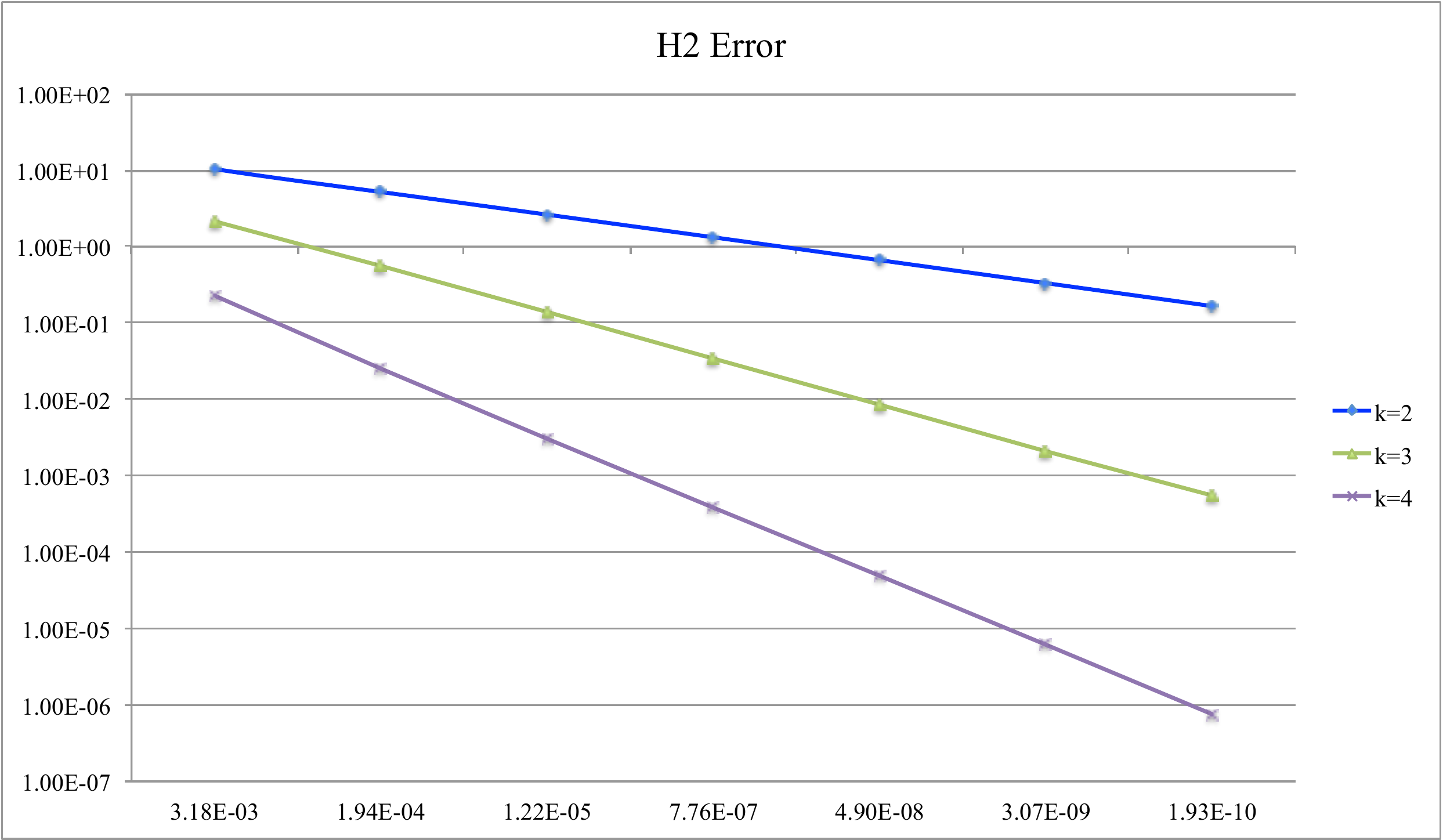} 
}
   \caption{The $H^1$ (left) and piecewise $H^2$ (right) errors for Test Problem 1
   with polynomial degree $k=1,2,3,4$.  The figures show that the $H^1$ error
   converges with order $\mathcal{O}(h^k)$, where as the piecewise $H^2$ error converges with
   order $\mathcal{O}(h^{k-1})$.}
\label{FigureTest1}
\end{figure}

\subsection*{Test 2: Uniformly continuous coefficients and $W^{2,p}$ solution}
For the second set of numerical experiments, we take the domain 
to be the  square $\Omega =(0,1/2)^2$, and take the coefficient matrix to be
\begin{align*}
A = 
\begin{pmatrix}
-\dfrac{5}{\log(|x|)}+15 & 1\\
1 & - \dfrac{1}{\log(|x|)} +3
\end{pmatrix},
\end{align*}
We choose the data such that the exact solution is given by $u = |x|^{7/4}$.
We note that $u\in W^{m,p}(\Omega)$ for $(7-4m)p>-8$.  In particular, $u\in W^{2,p}(\Omega)$
for $p<8$ and $u\in W^{3,p}(\Omega)$ for $p<8/5$.

In order to apply Theorem \ref{ErrorEstimateThm1} to this test problem, we
recall that the $k$th degree nodal interpolant of $u$ with $k\ge 2$ satisfies
\begin{align*}
\|D^2_h(u-I_h u)\|_{L^2(\Omega)}\le C h^{2-2/p}\|u\|_{W^{3,p}(\Omega)}
\end{align*}
for $p<2$.  Since $u\in W^{3,p}(\Omega)$ for $p<8/5$, Theorem \ref{ErrorEstimateThm1}
then predicts the convergence rate 
\begin{align*}
\|D^2_h(u-u_h)\|_{L^2(\Omega)}\le C\|D^2_h(u-I_h u)\|_{L^2(\Omega)}=\mathcal{O}(h^{3/4-\eps})
\end{align*}
for any $\eps>0$.  Note that a slight modification of these arguments also shows
that $|u-I_h u|_{H^1(\Omega)} = \mathcal{O}(h^{7/4-\eps})$.

The errors of the finite element method for {Test 2} 
using piecewise linear, quadratic and cubic polynomials are depicted
in Figure \ref{FigureTest2}.  As predicted by the theory, the $H^2$
error converges with order $\approx \mathcal{O}(h^{3/4})$ if the polynomial
degree is greater than or equal to two.  Similar to the first
test problem, the numerical experiments also show that the $H^1$ error
converges with optimal order, i.e., $|u-u_h|_{H^1(\Omega)} = \mathcal{O}(h^{7/4-\eps})$.

\begin{figure}[htb] 
 \centerline{ 
   \includegraphics[height=2.3in,width=2.5in]{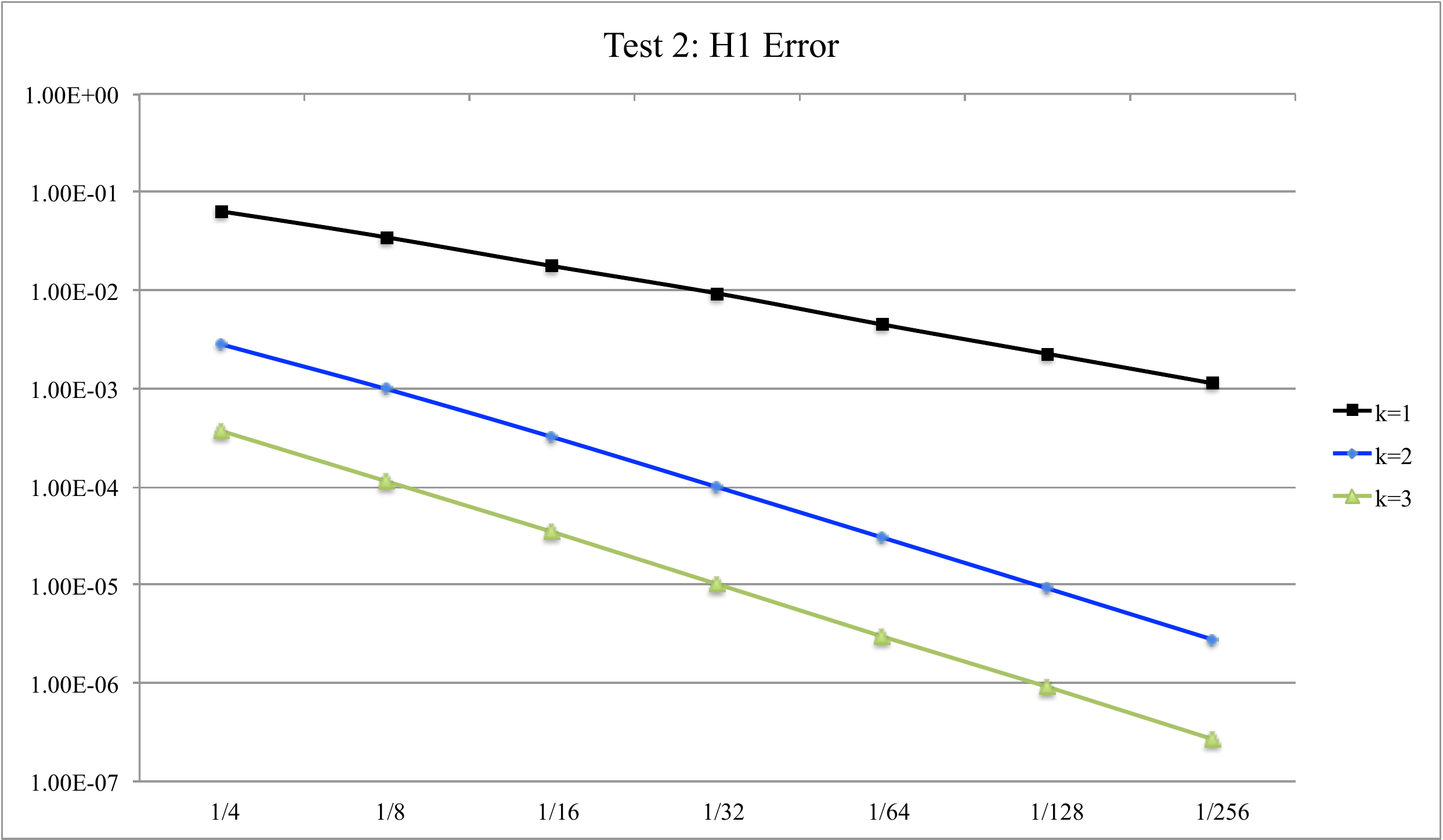}\,\,
      \includegraphics[height=2.3in,width=2.5in]{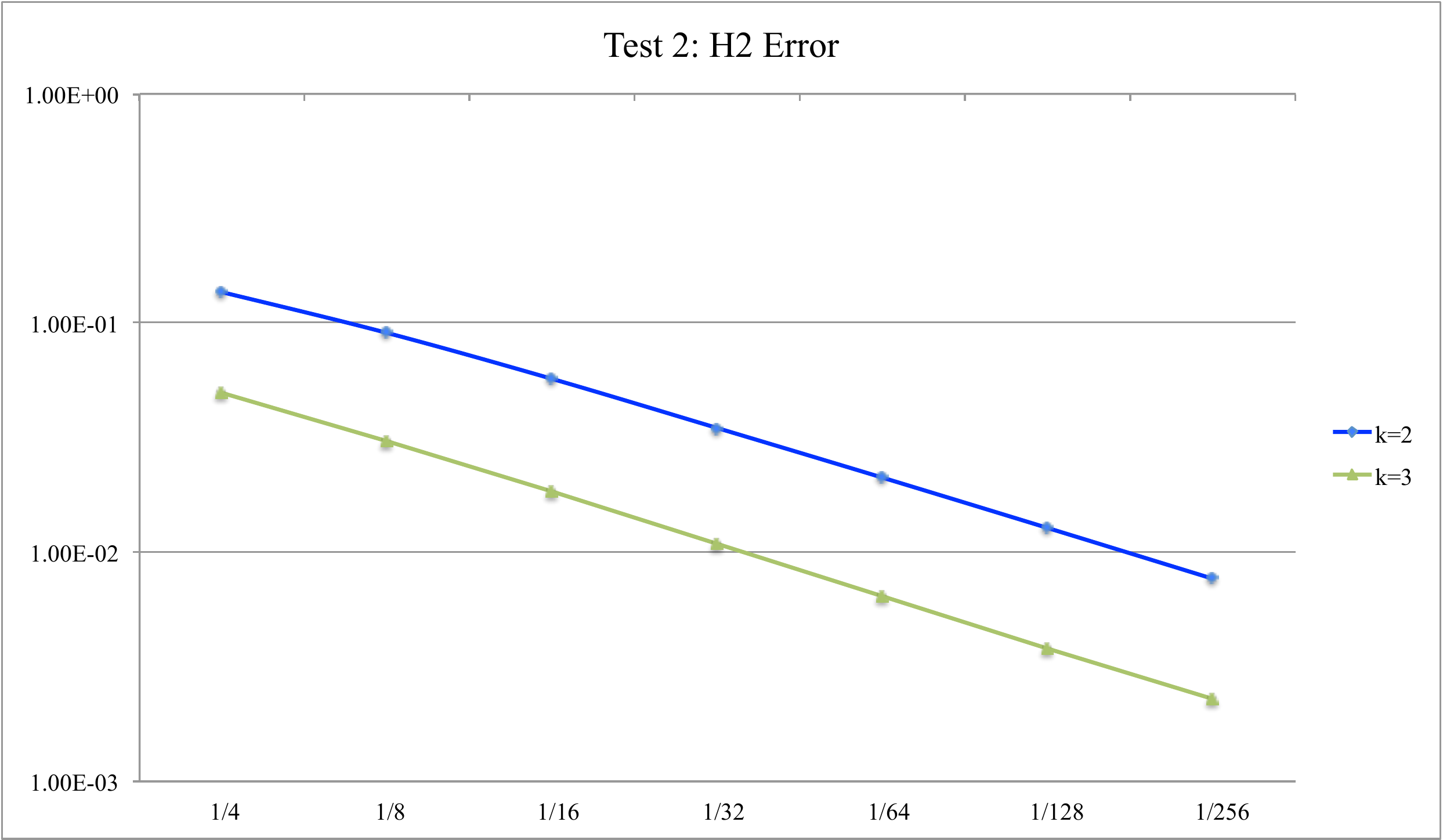} 
}
   \caption{The $H^1$ (left) and piecewise $H^2$ (right) errors for Test Problem 2
   with polynomial degree $k=1,2,3$.  The figures show that the $H^1$ error
   converges with order $\mathcal{O}(h^{\min\{k,7/4-\eps\}})$, where as the piecewise 
   $H^2$ error converges with order $\mathcal{O}(h^{\min\{k,7/4-\eps\}-1})$.}
\label{FigureTest2}
\end{figure}

\subsection*{Test 3: Degenerate coefficients and $W^{2,p}$ solution}
For the third and final set of test problems, we take $\Omega =(0,1)^2$, 
\begin{align*}
A = 
\frac{16}{9} 
\begin{pmatrix}
x_1^{2/3} & - x_1^{1/3}x_2^{1/3}\\
-x_1^{1/3} x_2^{1/3} & x_2^{2/3}
\end{pmatrix},
\end{align*}
and exact solution $u = x_1^{4/3}-x_2^{4/3}$.
We remark that the choice of the matrix and solution
is motivated by Aronson's example for the infinity-Laplace equation.
In particular, the function $u$ satisfies the quasi-linear PDE $\Del_\infty u = 0$, where
$\Del_\infty u:=(D^2 u \nab u)\cdot \nab u = (D^2 u):(\nab u (\nab u)^T)$.
Noting that $A = \nab u (\nab u)^T$, we see that $-A:D^2 u=0=:f$.

Unlike the first two test problems, the matrix is not uniformly elliptic, as
$\det(A(x)) =0$ for all $x\in \Omega$.  Therefore the theory given 
in the previous sections does not apply.  We also note that the exact
solution satisfies the regularity $u\in W^{m,p}(\Omega)$ for $(4-3m)p>-1$,
and therefore $u\in W^{2,p}(\Omega)\cap W^{1,\infty}(\Omega)$ for $p<3/2$.

The resulting errors of the finite element method using piecewise linear
and quadratic polynomials are plotted in Figure \ref{FigureTest3}.
In addition, we plot the computed solution and error in Figure \ref{FigureTestPicture}
with $k=2$ and $h=1/256$.
While this problem is outside the scope of the theory, the experiments
show that the method converges, and the following rates are observed:
\begin{align*}
\|u-u_h\|_{L^2(\Omega)} = \mathcal{O}(h^{4/3}),\qquad |u-u_h|_{H^1(\Omega)} = \mathcal{O}(h^{5/6}).
\end{align*}

\begin{figure}[htb] 
\centerline{
   \includegraphics[height=2.3in,width=2.5in]{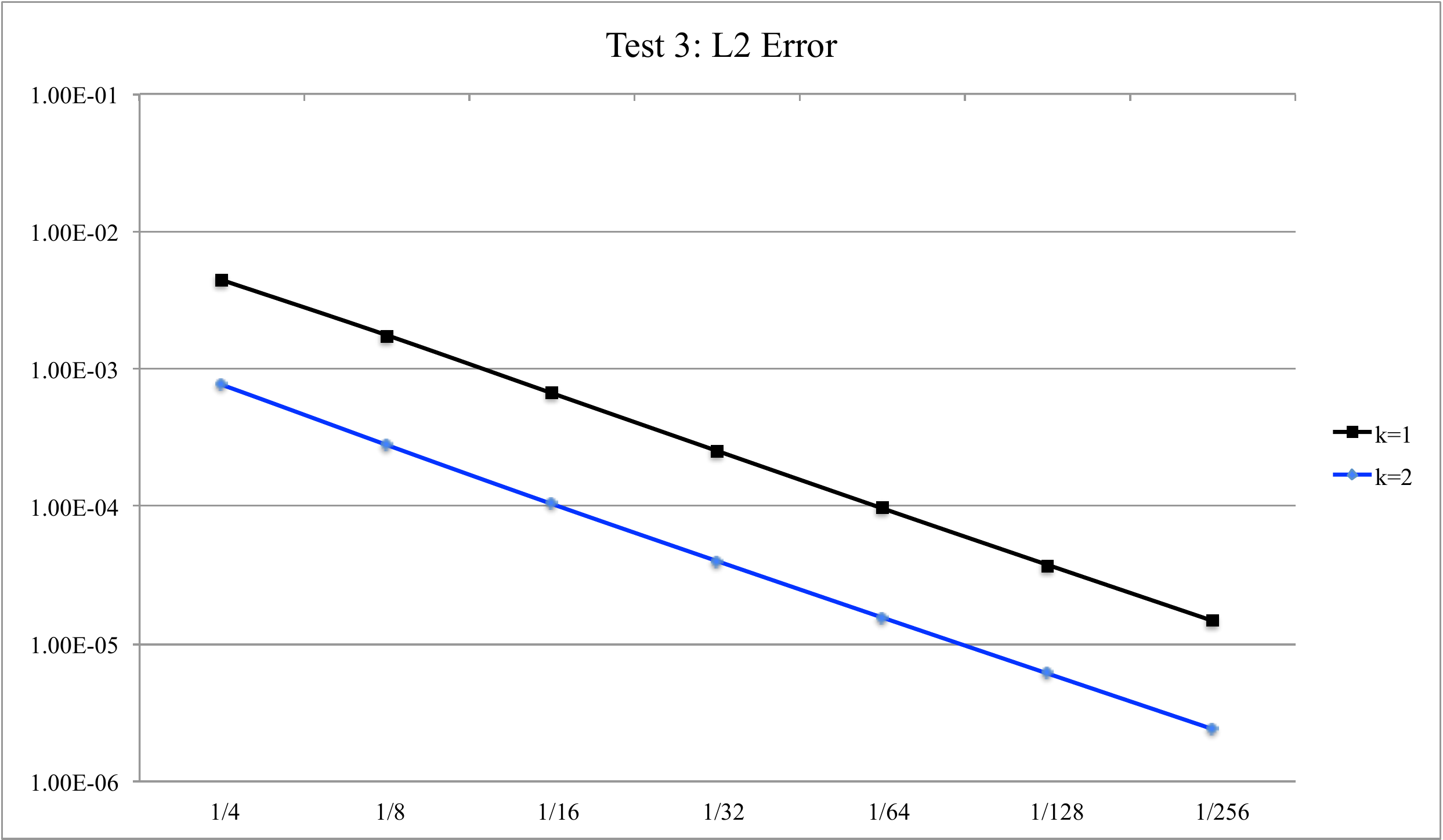} \,\,\,
      \includegraphics[height=2.3in,width=2.5in]{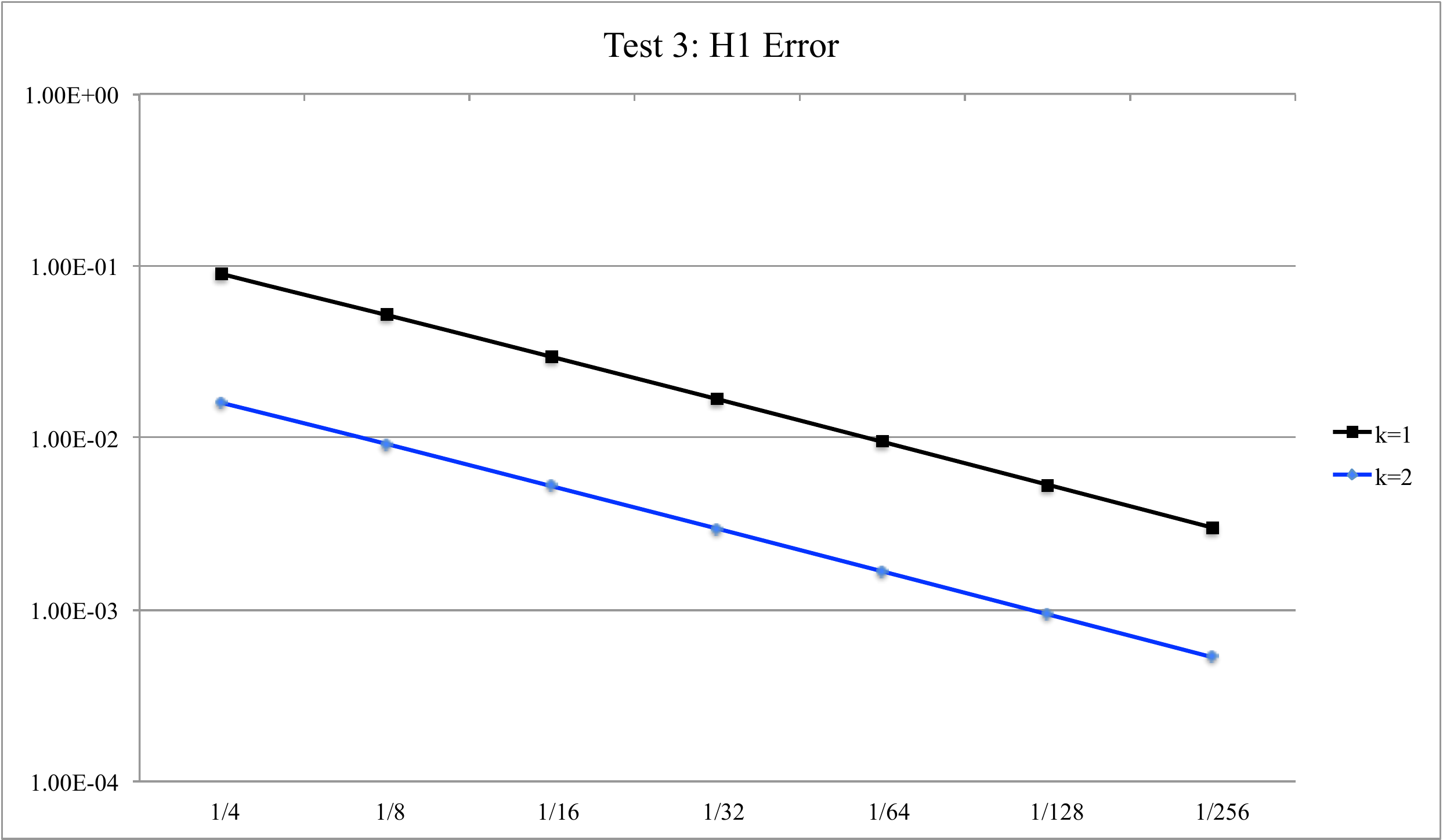} 
}
   \caption{The $H^1$ (left) and piecewise $H^2$ (right) errors for the degenerate Test Problem 3
   with polynomial degree $k=1$ and $k=2$.  The figures show that the $L^2$ error
   converges with order $\approx \mathcal{O}(h^{4/3})$ and the $H^1$ error converges
   with order $\approx \mathcal{O}(h^{5/6})$.}
\label{FigureTest3}
\end{figure}

\begin{figure}[htb] 
   \centering
   \includegraphics[height=2.3in,width=2.5in]{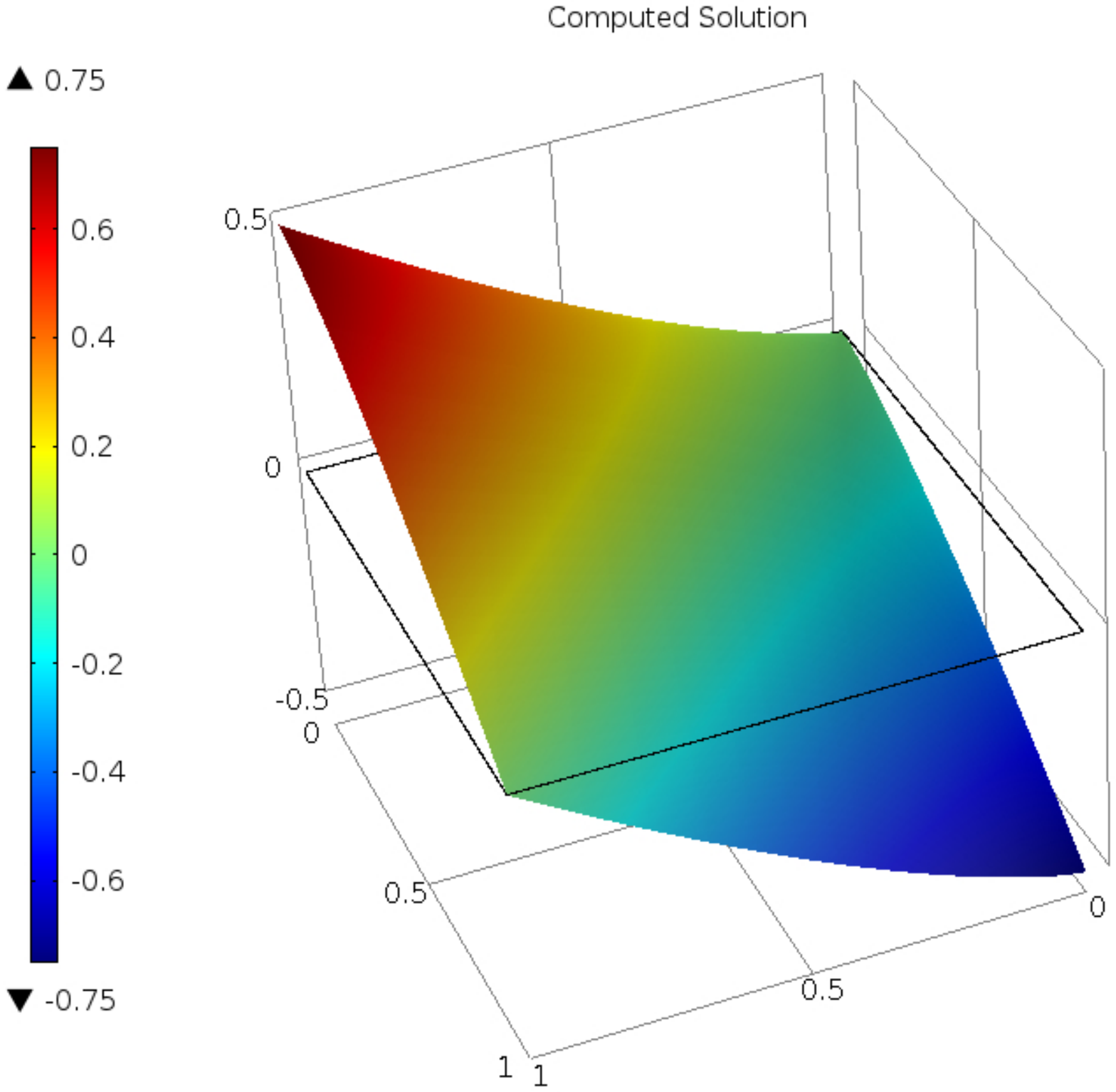}\,\,
      \includegraphics[height=2.3in,width=2.5in]{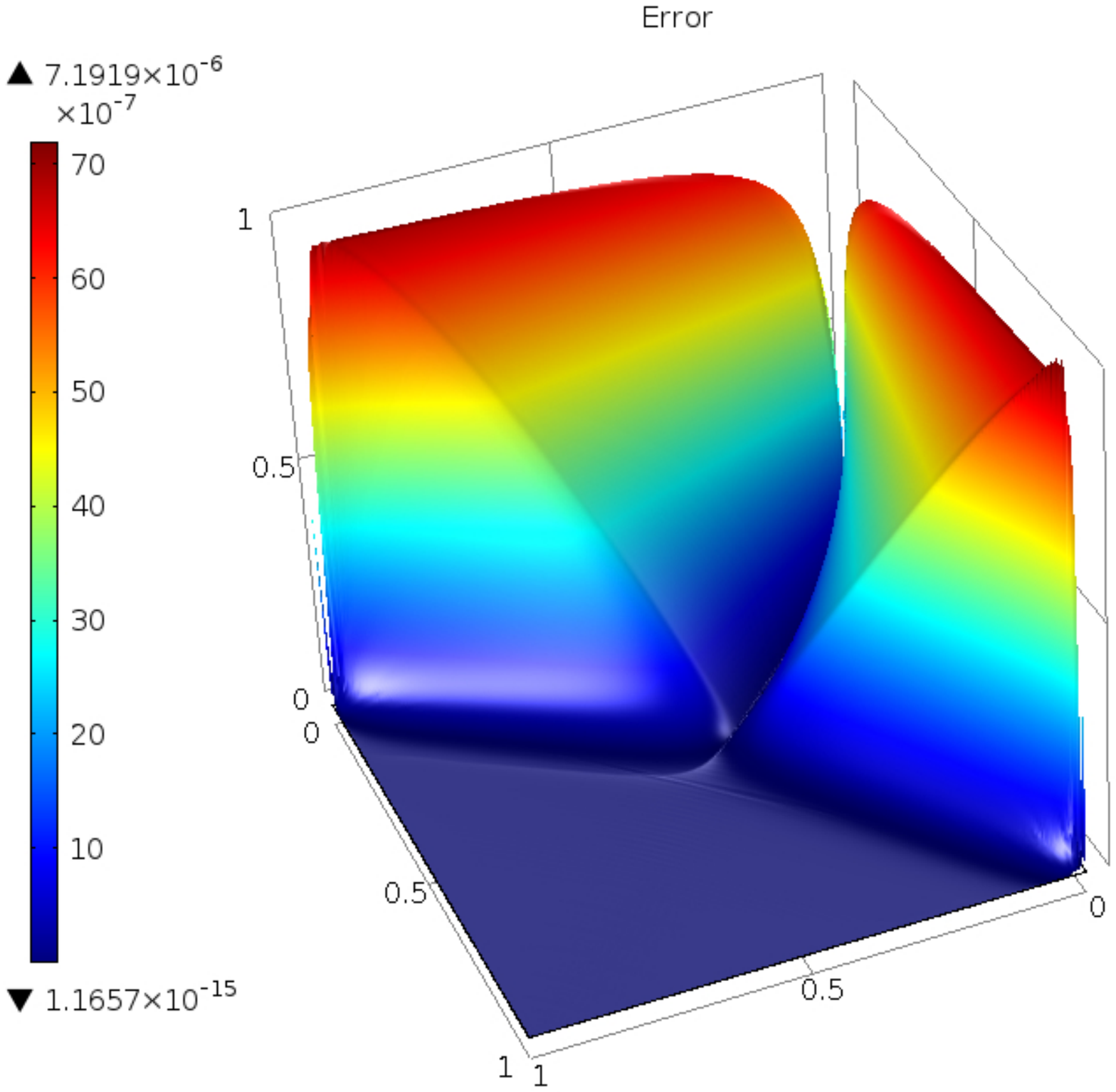} 
   \caption{Computed solution (left) and error (right) of test problem 3 with $k=2$ and $h=1/256$.}
\label{FigureTestPicture}
\end{figure}

{

}

\appendix

\section{Super approximation result}\label{AppendixA}
Here, we provide the proof of Lemma \ref{Superlem}.
As a first step, we use standard interpolation estimates \cite{Ciarlet78,Brenner} to obtain for $0\le m\le k+1$,
\begin{align}\label{standardInterpEst}
h^{mp} \|\eta v_h-I_h (\eta v_h)\|^p_{W^{m,p}(T)}
\les h^{p(k+1)} |\eta v_h|^p_{W^{k+1,p}(T)}.
\end{align}
Since $|\eta|_{W^{j,\infty}(T)}\les d^{-j}$ and $|v_h|_{H^{k+1}(T)} =0$, we find
\begin{align}\label{SuperProofLine1}
|\eta v_h|_{W^{k+1,p}(T)} 
&\les \sum_{|\alpha|+|\beta|=k+1} \int_T |D^\alpha \eta|^p |D^\beta v_h|^p\, dx\\
&\nonum\les \sum_{j=0}^{k} \frac{1}{d^{p(k+1-j)}} |v_h|_{W^{j,p}(T)}^p
\les \sum_{j=0}^k \frac{h^{-jp}}{d^{p(k+1-j)}} \|v_h\|_{L^p(T)}^p,
\end{align}
where an inverse estimate was applied to derive the last inequality.
Combining \eqref{SuperProofLine1} with \eqref{standardInterpEst} and using
the hypothesis $h\le d$ then gives us
\begin{align*}
h^{mp}\|\eta v_h-I_h (\eta v_h)\|_{W^{m,p}(T)}^p \les \sum_{j=0}^k \frac{h^{p(k+1-j)}}{d^{p(k+1-j)}} \|v_h\|_{L^p(T)}^p
\les \frac{h^{p}}{d^p}\|v_h\|_{L^p(T)}^p.
\end{align*}
Therefore for $m\in \{0,1\}$ we have
\begin{align*}
h^{mp}  \|\eta v_h-I_h (\eta v_h)\|^p_{W^{m,p}(D)}
&\le \mathop{\sum_{T\in \mct}}_{T\cap D\neq \emptyset} h^{mp}\|\eta v_h-I_h (\eta v_h)\|^p_{W^{m,p}(T)}\\
&\les \mathop{\sum_{T\in \mct}}_{T\cap D\neq \emptyset} \frac{h^p}{d^p} \|v_h\|_{L^p(T)}^p
\le \frac{h^p}{d^p}\|v_h\|_{L^p(D_h)}^p.
\end{align*}
Thus, \eqref{SuperLine2} is satisfied.

To obtain the second estimate \eqref{SuperLine1}, we first use \eqref{standardInterpEst},
\eqref{SuperProofLine1} an an inverse estimate to get
\begin{align}\label{SuperProofLine2}
\|D^2 (\eta v_h-I_h (\eta v_h))\|_{L^p(T)}^p
&\les h^{p(k-1)}|\eta v_h|_{W^{k+1,p}(T)}^p\les \sum_{j=0}^k \frac{h^{p(k-1)}}{d^{p(k+1-j)}} |v_h|_{W^{j,p}(T)}^p\\
&\nonum\les \frac{1}{d^{2p}} \|v_h\|_{L^p(T)}^p + \sum_{j=1}^k \frac{h^{k-j}}{d^{p(k+1-j)}} \|v_h\|_{W^{1,p}(T)}^p\\
&\nonum\les \frac{1}{d^{2p}} \|v_h\|_{L^p(T)}^p + \frac{1}{d^{p}} \|v_h\|_{W^{1,p}(T)}^p\les \frac{1}{d^{2p}} \|v_h\|_{W^{1,p}(T)}^p.
\end{align}
By similar arguments we find
\begin{align}
\label{SuperProofLine3}
h^{-p}\|\nab (\eta v_h-I_h (\eta v_h))\|_{L^p(T)}^p
\les h^{p(k-1)}|\eta v_h|_{W^{k+1,p}(T)}^p\les \frac{1}{d^{2p}} \|v_h\|_{W^{1,p}(T)}^p.
\end{align}
Therefore by Lemma \ref{TraceLine} and \eqref{SuperProofLine2}--\eqref{SuperProofLine3}, we obtain
\begin{align*}
\|\eta v_h-I_h (\eta v_h)\|_{W^{2,p}_h(D)}^p
&\le \mathop{\sum_{T\in \mct}}_{T\cap D\neq \emptyset}\|D^2(\eta v_h-I_h (\eta v_h))\|_{L^p(T)}^p\\
&\qquad 
+ {\sum_{e\in \mce^I}} h_e^{1-p}\big\|\jump{\nab (\eta v_h-I_h (\eta v_h))}\big\|_{L^p(e\cap \bar{D})}^p\\
&\les \mathop{\sum_{T\in \mct}}_{T\cap D\neq \emptyset}\Big[\|D^2(\eta v_h-I_h (\eta v_h))\|_{L^p(T)}^p
+h^{-p}\|\nab (\eta v_h-I_h (\eta v_h))\big\|_{L^p(T)}^p\Big]\\
&\les \mathop{\sum_{T\in \mct}}_{T\cap D\neq \emptyset} \frac{1}{d^{2p}} \|v_h\|_{W^{1,p}(T)}^p\le \frac{1}{d^{2p}} \|v_h\|_{W^{1,p}(D_h)}^p.
\end{align*}
Taking the $p$th root of this last expression yields the estimate \eqref{SuperLine1}.
The proof of \eqref{SuperLine3} uses the exact same arguments and is therefore omitted.

\section{Proof of Lemma \ref{DualGardingEstimateLemma}}
To prove Lemma \ref{DualGardingEstimateLemma} we introduce
the discrete $W^{-2,p}$-type norm
\begin{align}\label{Hm2norm}
\|r\|_{W^{-2,p}_h(D)}:=\sup_{0\neq v_h\in V_h(D)} \frac{(r,v_h)_D}{\|v_h\|_{W^{2,p^\prime}(D)}},
\end{align}
and the $W^{-1,p}$ norm (defined for $L^p$ functions)
\begin{align}\label{Hm1norm}
\|r\|_{W^{-1,p}(D)} = \sup_{0\neq v\in W^{1,p^\prime}(D)} \frac{(r,v)_D}{\|v\|_{W^{1,p^\prime}(D)}} = \mathop{\sup_{v\in W^{1,p^\prime}}}_{\|v\|_{W^{1,p^\prime}(D)}=1} 
(r,v)_D\, dx.
\end{align}
The desired estimate \eqref{DualEstimateLine} is then equivalent to 
\begin{align}
\label{AdjointGoal}
\|v_h\|_{L^{p^\prime}(\Omega)}\les \|\Lcalh^* v_h\|_{W^{-2,p^\prime}_h(\Omega)}\qquad \forall v_h\in V_h,
\end{align}
where we recall that $\Lcalh^*$ is the adjoint operator of $\Lcalh$.
Due to its length, we break up the proof of \eqref{AdjointGoal} 
into three steps.\medskip

\noindent {\em Step 1:  A local estimate.}\\
\indent The first step in the derivation of \eqref{DualEstimateLine} (equivalently, \eqref{AdjointGoal}) is to prove
a local version of this estimate, analogous to Lemma \ref{LocalNonDivLemma}.
To this end, for fixed $\xn\in \Omega$, let $\delta_0$, $R_{\delta_0}$, $R_1:=\frac13 R_{\delta_0}$
and $B_1 :=B_{R_1}(x_0)$ 
be as in Lemmas \ref{operatorDiffForm}--\ref{LocalNonDivLemma},
with $\delta_0>0$ to be determined.
For a fixed  $v_h\in V_h(B_{1})$, let $\varphi\in W^{2,p}(\Omega)\cap W^{1,p}_0(\Omega)$ satisfy
$\Lcal \varphi = v_h|v_h|^{p^\prime-2}$ in $\Omega$ with 
\begin{align}
\label{AppEllipticReg}
\|\varphi\|_{W^{2,p}(\Omega)}\les \||v_h|^{p^\prime-1}\|_{L^p(\Omega)} \les \|v_h\|_{L^{p^\prime}(B_{1})}^{p^\prime-1}.
\end{align}
Multiplying the PDE by $v_h$, integrating over $\Omega$,
 and using the consistency of $\Lcalh$ yields
\begin{align*}
\|v_h\|^{p^\prime}_{L^{p^\prime}(B_{1})} = \|v_h\|^{p^\prime}_{L^{p^\prime}(\Omega)} = (\Lcal \varphi,v_h) = ( \Lcalh \varphi,v_h).
\end{align*}
Therefore, for any $\varphi_h\in V_h$, there holds
\begin{align}\label{LineInLemma}
\|v_h\|_{L^{p^\prime}(B_1)}^{p^\prime} 
&= ( \Lcalh \varphi_h,v_h) + (\Lcalh (\varphi-\varphi_h),v_h)\\
&\nonum= ( \Lcalh^* v_h, \varphi_h) + ( \Lcaloh (\varphi-\varphi_h),v_h)+ ( (\Lcalh-\Lcaloh)(\varphi-\varphi_h),v_h),
\end{align}
where $\Lcaloh$ is given by \eqref{add9c} with $A_0 \equiv A(x_0)$.
Now take $\varphi_h\in V_h$ to be the elliptic projection of $\varphi$ 
with respect to $\Lcaloh$, i.e.,
\begin{align*}
( \Lcaloh (\varphi-\varphi_h),w_h)=0\qquad \forall w_h\in V_h.
\end{align*}
Lemma \ref{StabilityLem} ensures that $\varphi_h$ is well--defined and satisfies the estimate
\begin{align}\label{DAppEllipticReg}
\|\varphi_h\|_{W^{2,p}_h(\Omega)}\les \|\Lcaloh \varphi_h\|_{L^p_h(\Omega)} = \|\Lcaloh \varphi\|_{L^p_h(\Omega)}\les \|\varphi\|_{W^{2,p}(\Omega)}\les \|v_h\|_{L^{p^\prime}(B_1)}^{p^\prime-1}.
\end{align}
Combining  Lemma  \ref{operatorDiffForm}, \eqref{AppEllipticReg}--\eqref{DAppEllipticReg} and \eqref{Hm2norm},
we have
\begin{align*}
&\|v_h\|_{L^{p^\prime}(B_{1})}^{p^\prime} 
= ( \Lcalh^* v_h,\varphi_h) + \big( (\Lcalh - \Lcaloh)(\varphi-\varphi_h),v_h\big)\\
&\quad\nonum\le \|\Lcalh^* v_h\|_{W^{-2,p^\prime}_h(\Omega)} \|\varphi_h\|_{W^{2,p}_h(\Omega)} + \| (\Lcalh - \Lcaloh)(\varphi-\varphi_h)\|_{L^p_h(B_{1})} \|v_h\|_{L^{p^\prime}(B_{1})}\\
&\quad\nonum\le \|\Lcalh^* v_h\|_{W^{-2,p^\prime}_h(B_{1})} \|v_h\|_{L^{p^\prime}(B_1)}^{p^\prime-1} + \delta_0\|\varphi-\varphi_h\|_{W^{2,p}_h(B_1)} \|v_h\|_{L^{p^\prime}(B_1)}\\
&\quad\nonum\le \|\Lcalh^* v_h\|_{W^{-2,p^\prime}_h(B_{1})} \|v_h\|_{L^{p^\prime}(B_1)}^{p^\prime-1} + \delta_0 \|v_h\|^{p^\prime}_{L^{p^\prime}(B_1)}.
\end{align*}
Taking $\delta_0$ sufficiently small and rearranging terms 
 gives the local stability estimate for finite element functions with compact support:
\begin{align}\label{AppStep1Line}
\|v_h\|_{L^{p^\prime}(B_1)}\les \|\Lcalh^* v_h\|_{W^{-2,p^\prime}_h(B_{1})}\qquad \forall v_h\in V_h(B_1).
\end{align}

\noindent {\em Step 2:  A global G\"arding-type inequality.}\\
We now follow the proof of Lemma \ref{localStabilityLemma} to derive 
a global G\"arding-type inequality for the adjoint problem.  Let $R_1$ be given in the first
step of the proof, $R_2 = 2R_1$, and $R_3 = 3R_1$.
Let  $\eta\in C^3(\Omega)$
satisfy the conditions in Lemma \ref{localStabilityLemma} (cf. \eqref{etaprop}).
By the triangle inequality and \eqref{AppStep1Line}
we have for any $v_h\in V_h$
\begin{align*}
&\|v_h\|_{L^{p^\prime}(B_1)} 
= \|\eta v_h\|_{L^{p^\prime}(B_1)} \le \|\eta v_h-I_h (\eta v_h)\|_{L^{p^\prime}(B_1)}+\|I_h (\eta v_h)\|_{L^{p^\prime}(B_1)}\\
&\quad \les \|\eta v_h-I_h (\eta v_h)\|_{L^{p^\prime}(B_1)}+ \|\Lcalh^* (I_h (\eta v_h))\|_{W^{-2,p^\prime}_h(B_{1})}\\
&\quad\les \|\eta v_h-I_h (\eta v_h)\|_{L^{p^\prime}(B_1)}+ \|\Lcalh^* (I_h (\eta v_h)-\eta v_h)\|_{W^{-2,p^\prime}_h(B_{1})}+ \|\Lcalh^* (\eta v_h)\|_{W^{-2,p^\prime}_h(B_{1})}.
\end{align*}
Applying  Lemmas \ref{LContLemma}, Lemma \ref{Superlem} (with $d=R_1$) and
an inverse estimate yields 
\begin{align}\label{AppBoundLine42}
\|v_h\|_{L^{p^\prime}(B_1)} 
&\les \|\eta v_h-I_h (\eta v_h)\|_{L^{p^\prime}(B_{2})}+ \|\Lcalh^* (\eta v_h)\|_{W^{-2,p^\prime}_h(B_{1})}\\
&\nonum \les \frac{h}{R_1} \|v_h\|_{L^{p^\prime}(B_{3})}+\|\Lcalh^* (\eta v_h)\|_{W^{-2,p^\prime}_h(B_{3})}\\
&\nonumber\les \frac{1}{R_1} \|v_h\|_{W^{-1,p^\prime}(B_3)} +\|\Lcalh^* (\eta v_h)\|_{W^{-2,p^\prime}_h(B_3)}.
\end{align}

The goal now is to replace $\Lcalh^*(\eta v_h)$
appearing in the right--hand side of \eqref{AppBoundLine42}
by $\Lcalh^* v_h$ plus low--order terms.
To this end, we write  for $w_h\in V_h(B_3)$ (cf. \ref{add9c}),
\begin{align}\label{IsLine}
&(\Lcalh^* (\eta v_h),w_h)
 = a_h(w_h,\eta v_h)
 = a_h(w_h \eta,v_h)+\big[a_h(w_h,\eta v_h)-a_h(w_h\eta ,v_h)\big]\\
&\quad\nonumber= a_h(I_h(w_h \eta),v_h)+a_h(w_h\eta -I_h(w_h\eta),v_h)+
\big[a_h(w_h,\eta v_h)-a_h(w_h\eta ,v_h)\big]\\
%
&\quad\nonum =: I_1+I_2+I_3.
\end{align}
To derive an upper bound of $I_1$, we  use \eqref{Hm2norm} 
and  properties of the interpolant and cut--off function $\eta$:
\begin{align}\label{AppI1Bound}
I_1 & = (\Lcalh^*v_h,I_h(\eta w_h)) \le \|\Lcalh^* v_h\|_{W^{-2,p^\prime}(B_3)}\|I_h (\eta w_h)\|_{W^{2,p}(B_3)}\\
&\nonum\les \|\Lcalh^* v_h\|_{W^{-2,p^\prime}(B_3)}\|\eta w_h\|_{W^{2,p}_h(B_3)}
\les \frac{1}{R_1^2}  \|\Lcalh^* v_h\|_{W^{-2,p^\prime}(B_2)}\|w_h\|_{W^{2,p}_h(B_3)}.
\end{align}
Next, we apply Lemmas \ref{LContLemma}, \ref{Superlem} and an inverse estimate 
to bound $I_2$:
\begin{align}\label{AppI2Bound}
I_2 
&= ( \Lcalh (\eta w_h-I_h (\eta w_h)),v_h)
 \les \|\eta w_h-I_h (\eta w_h)\|_{W^{2,p}_h(B_3)} \|v_h\|_{L^{p^\prime}(B_3)}\\
 &\nonum\les \frac{h}{R_1^3}\|w_h\|_{W^{2,p}_h(B_3)} \|v_h\|_{L^{p^\prime}(B_3)}
\nonum \les \frac{1}{R_1^3}\|w_h\|_{W^{2,p}_h(B_3)} \|v_h\|_{W^{-1,p^\prime}(B_3)}.
\end{align}

To estimate $I_3$, we add and subtract $a_0(w_h,\eta v_h)-a_0(w_h\eta,v_h)$
and expand terms to obtain
\begin{align}
I_3 \label{I3expand}
&= a_0(w_h,\eta v_h)-a_0(w_h \eta,v_h)\\
&\qquad\nonumber+\big[a_h(w_h,\eta v_h)-a_h(w_h \eta,v_h)-\big(a_0(w_h,\eta v_h)-a_0(w_h\eta,v_h)\big)\big]\\
&\nonumber = -\int_{B_3} \Big(w_h A_0:D^2 \eta +2 A_0\nab \eta \cdot \nab w_h\Big) v_h\, dx\\
&\nonumber  \qquad -\int_{B_3} \Big(w_h (A-A_0):D^2 \eta +2 (A-A_0)\nab \eta \cdot \nab w_h\Big) v_h\, dx =:K_1+K_2.
\end{align}
Applying H\"older's inequality 
and Lemmas \ref{discretePoincare}--\ref{discreteHolder} yields
\begin{align}\label{K1Bound}
K_1&\le \|w_h A_0:D^2\eta \|_{W^{1,p}(B_3)}\|v_h\|_{W^{-1,p^\prime}(B_3)}
+2\Big|\int_{B_3} (A_0 \nab \eta \cdot \nab w_h)v_h\, dx\Big|\\
&\nonumber\les \big(\frac{1}{R_1^3} \|w_h\|_{W^{1,p}(B_3)} + \frac{1}{R_1^2} \|w_h\|_{W^{2,p}_h(B_3)}\big)\|v_h\|_{W^{-1,p^\prime}(B_3)}\\
&\nonumber\les \frac{1}{R_1^2} \|w_h\|_{W^{2,p}_h(B_3)}
\|v_h\|_{W^{-1,p^\prime}(B_3)},
%
\end{align}
Similarly, by Lemma \ref{discretePoincare}
and \eqref{AC101}, we obtain
\begin{align}
\label{K2bound}
K_2
&\le \|A-A_0\|_{L^\infty(B_3)}\big(\|w_h\|_{L^p(B_3)} \|D^2\eta \|_{L^\infty(\Omega)}
+ \|\nab w_h\|_{L^p(B_3)}\|\nab \eta\|_{L^\infty(\Omega)}\big)\|v_h\|_{L^{p^\prime}(B_3)}\\
&\nonumber \les \delta_0 \big({R_3^2}\|D^2\eta \|_{L^\infty(\Omega)} +
R_3 \|\nab \eta\|_{L^\infty(\Omega)}\big)\|w_h\|_{W^{2,p}_h(B_3)}\|v_h\|_{L^{p^\prime}(B_3)}\\
&\nonumber \les  \delta_0\|w_h\|_{W^{2,p}_h(B_3)}\|v_h\|_{L^{p^\prime}(B_3)}
\end{align}
Combining \eqref{I3expand}--\eqref{K2bound}
results in the following upper bound of $I_3$:
\begin{align}\label{I3bound}
I_3 \les \frac{1}{R_1^2} \|w_h\|_{W^{2,p}_h(B_3)} \|v_h\|_{W^{-1,p^\prime}(B_3)}
+ \delta_0\|w_h\|_{W^{2,p}_h(B_3)}\|v_h\|_{L^{p^\prime}(B_3)}
\end{align}

Applying the estimates to \eqref{AppI1Bound}--\eqref{AppI2Bound}, \eqref{I3bound} to \eqref{IsLine} results in
\begin{align*}
( \Lcalh^* (\eta v_h),w_h) 
&\les \frac{1}{R_1^3}\Big( \|\Lcalh^* v_h\|_{W^{-2,p^\prime}(B_3)}+ \|v_h\|_{W^{-1,p^\prime}(B_3)}\Big)\|w_h\|_{W^{2,p}_h(B_3)}\\
&\qquad +\delta_0 \|v_h\|_{L^{p^\prime}(B_3)}\|w_h\|_{W^{2,p}_h(B_3)}.
\end{align*}
and therefore by \eqref{Hm2norm},
\begin{align}
\label{AppBoundLine57}
\|\Lcalh^* (\eta v_h)\|_{W^{-2,p^\prime}_h(B_3)}\les \frac{1}{R_1^3} \Big(\|\Lcalh^* v_h\|_{W^{-2,p^\prime}(B_3)}+ \|v_h\|_{W^{-1,p^\prime}(B_3)}\Big)+\delta_0\|v_h\|_{L^{p^\prime}(B_3)}.
\end{align}
Combining \eqref{AppBoundLine57} and \eqref{AppBoundLine42} yields
\begin{align*}
\|v_h\|_{L^{p^\prime}(B_1)} \les  \frac{1}{R_1^3} \Big(\|\Lcalh^* v_h\|_{W^{-2,p^\prime}_h(B_3)}+ \|v_h\|_{W^{-1,p^\prime}(B_3)}\Big)+\delta_0\|v_h\|_{L^{p^\prime}(B_3)}.
\end{align*}
Finally, we use the exact same covering argument in the proof of Lemma \ref{localStabilityLemma} to obtain
\begin{align*}
\|v_h\|_{L^{p^\prime}(\Omega)}\les \|\Lcalh^* v_h\|_{W^{-2,p^\prime}_h(\Omega)}+\|v_h\|_{W^{-1,p^\prime}(\Omega)}+\delta_0\|v_h\|_{L^{p^\prime}(\Omega)}.
\end{align*}
Taking $\delta_0$ sufficiently small and kicking back the last term then
yields the G\"arding-type estimate
\begin{align}\label{AppStep2Line}
\|v_h\|_{L^{p^\prime}(\Omega)}\les \|\Lcalh^* v_h\|_{W^{-2,p^\prime}_h(\Omega)}+\|v_h\|_{W^{-1,p^\prime}(\Omega)}.
\end{align}

\noindent {\em Step 3:  A duality argument}\\
\indent In the last step of the proof, we shall combine a duality argument and
\eqref{AppStep2Line} to obtain the desired result \eqref{AdjointGoal}.

Define the set
\begin{align*}
X = \{g\in W^{1,p}_0(\Omega):\ \|g\|_{W^{1,p}(\Omega)} = 1\}.
\end{align*}
Since $X$ is precompact in $L^{p}(\Omega)$, and 
due to the elliptic regularity estimate $\|\varphi\|_{W^{2,p}(\Omega)}\les \|\Lcal \varphi\|_{L^p(\Omega)}$,
the set
\begin{align*}
W  = \{\varphi\in W^{2,p}\cap W^{1,p}_0(\Omega):\ \Lcal \varphi = g,\ \exists g\in X\}
\end{align*}
is precompact in $W^{2,p}(\Omega)$.  Therefore by \cite[Lemma 5]{SchatzWang96}, for
every $\eps>0$, there exists a $h_2(\eps,W)>0$ such 
that for each $\varphi\in W$ and $h\le h_2$ there exists $\varphi_h\in V_h$
satisfying 
\begin{align}\label{DensityApprox}
\|\varphi-\varphi_h\|_{W^{2,p}_h(\Omega)}\le \eps\quad \text{for }k\ge 2.
\end{align}
Note that \eqref{DensityApprox} implies $\|\varphi_h\|_{W^{2,p}_h(\Omega)}\le \|\varphi\|_{W^{2,p}(\Omega)}+\eps\les 1$.

For $g\in X$ we shall use $\varphi_g\in W$ to denote
the solution to $\Lcal \varphi_g=g$.  We then have by Lemma \ref{LContLemma}, for any 
$v_h\in V_h$ and $\varphi_h\in V_h$,
\begin{align*}
\int_\Omega v_h g\,dx  
&= ( \Lcalh \varphi_g,v_h) = ( \Lcalh^* v_h,\varphi_h) + ( \Lcalh (\varphi_g-\varphi_h),v_h)\\
&\les \|\Lcalh^* v_h\|_{W_h^{-2,p^\prime}(\Omega)}\|\varphi_h\|_{W^{2,p}_h(\Omega)} + \|\varphi_g-\varphi_h\|_{W^{2,p}_h(\Omega)} \|v_h\|_{L^{p^\prime}(\Omega)}.
\end{align*}
Choosing $\varphi_h$ so that \eqref{DensityApprox} is satisfied (with $\varphi=\varphi_g$) and 
using the definition of the $W^{-1,p^\prime}$ norm \eqref{Hm1norm}
results in
\begin{align*}
\|v_h\|_{W^{-1,p^\prime}(\Omega)} \les \|\Lcalh^* v_h\|_{W^{-2,p^\prime}_h(\Omega)} + \eps \|v_h\|_{L^{p^\prime}(\Omega)}.
\end{align*}
Finally we apply this last estimate in \eqref{AppStep2Line} to obtain
\begin{align*}
\|v_h\|_{L^{p^\prime}(\Omega)}\les \|\Lcalh^* v_h\|_{W^{-2,p^\prime}_h(\Omega)}+\eps\|v_h\|_{L^{p^\prime}(\Omega)}.
\end{align*}
Taking $\eps$ sufficiently small and kicking back a term to the left--hand side
yields \eqref{AdjointGoal}.  This completes the proof.

\section{A discrete Poincar\'e estimate}
\begin{lemma}\label{discretePoincare}
There holds for any $w_h\in V_h(D)$ with ${\rm diam}(D) \ge h$,
\begin{align*}
\|w_h\|_{W^{m,p}(D)}\les {\rm diam}(D)^{2-m}\|w_h\|_{W^{2,p}_h(D)}\quad m=1,2.
\end{align*}
\end{lemma}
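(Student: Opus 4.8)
\textbf{Proof proposal for Lemma \ref{discretePoincare}.}
The plan is to reduce the claim to a standard (continuous) Poincar\'e--Friedrichs inequality by exploiting that $w_h \in V_h(D)$ vanishes on $\Omega \setminus D$, and in particular on $\partial D$ in the appropriate sense, combined with the fact that $w_h$ is globally $H^1$. First I would treat the case $m=1$. Since $w_h \in V_h(D) \subset H^1_0(\Omega)$ with support contained in $\overline{D}$, extension by zero gives $w_h \in W^{1,p}_0(D')$ for any domain $D'\supseteq D$ comparable in size to $D$; applying the scaled Poincar\'e inequality on such a set yields $\|w_h\|_{L^p(D)} \les {\rm diam}(D)\,\|\nabla w_h\|_{L^p(D)}$, hence $\|w_h\|_{W^{1,p}(D)} \les {\rm diam}(D)\,\|\nabla w_h\|_{L^p(D)}$ after absorbing. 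This already gives the $m=1$ estimate once we bound $\|\nabla w_h\|_{L^p(D)}$ by ${\rm diam}(D)\,\|w_h\|_{W^{2,p}_h(D)}$, which is itself a Poincar\'e-type bound on the gradient.

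The gradient bound is the crux. Here $\nabla w_h$ is only piecewise smooth, so I would apply the discrete Sobolev interpolation estimate of Lemma \ref{DiscreteInterp}, but rescaled to the subdomain $D$. More precisely, the argument of Lemma \ref{DiscreteInterp} (integration by parts elementwise, H\"older, and the trace inequality Lemma \ref{TraceLemma}) applied to $w_h\in V_h(D)$ on $D$ gives $\|\nabla w_h\|_{L^p(D)}^2 \les \|w_h\|_{L^p(D)}\,\|w_h\|_{W^{2,p}_h(D)}$, with no boundary contributions from $\partial D$ because $w_h$ and its jumps vanish outside $D$. Combining this with the $m=1$ Poincar\'e inequality $\|w_h\|_{L^p(D)}\les {\rm diam}(D)\|\nabla w_h\|_{L^p(D)}$ yields $\|\nabla w_h\|_{L^p(D)}^2 \les {\rm diam}(D)\,\|\nabla w_h\|_{L^p(D)}\,\|w_h\|_{W^{2,p}_h(D)}$, hence $\|\nabla w_h\|_{L^p(D)} \les {\rm diam}(D)\,\|w_h\|_{W^{2,p}_h(D)}$, and feeding this back gives $\|w_h\|_{L^p(D)}\les {\rm diam}(D)^2\|w_h\|_{W^{2,p}_h(D)}$. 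Together these establish the case $m=1$ (both the $L^p$ and the full $W^{1,p}$ norm on the left). The case $m=2$ is then immediate: $\|w_h\|_{W^{2,p}(D)}$ — interpreted as the broken norm, or equivalently $\|w_h\|_{W^{2,p}_h(D)}$ plus the lower-order pieces — is controlled by $\|D^2_h w_h\|_{L^p(D)} + \|\nabla w_h\|_{L^p(D)} + \|w_h\|_{L^p(D)} \les \|w_h\|_{W^{2,p}_h(D)}(1 + {\rm diam}(D) + {\rm diam}(D)^2) \les \|w_h\|_{W^{2,p}_h(D)}$ since ${\rm diam}(D)\le {\rm diam}(\Omega)\les 1$.

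The main obstacle I anticipate is making the rescaled version of Lemma \ref{DiscreteInterp} on the subdomain $D$ rigorous with the correct power of ${\rm diam}(D)$: one must check that the elementwise integration by parts produces only interior-edge jump terms lying in $D$ (using $w_h|_{\Omega\setminus D}=0$ and ${\rm diam}(D)\ge h$ so that $V_h(D)$ is nontrivial and $D$ contains whole elements), and that the trace/inverse constants are $h$-independent — which they are, since the mesh is quasi-uniform. A secondary point is that the estimate $\|w_h\|_{L^p(D)}\les {\rm diam}(D)\|\nabla w_h\|_{L^p(D)}$ is a genuinely continuous (not discrete) fact, valid because $w_h$, extended by zero, lies in $W^{1,p}_0$ of a bounded set of diameter comparable to ${\rm diam}(D)$; no mesh-dependent argument is needed there. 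Once these two ingredients are in place, the chaining of inequalities is routine.
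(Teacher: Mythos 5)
Your proof is correct, but it follows a genuinely different route from the paper's. The paper constructs an enriching operator $E_h$ into the $C^1$-conforming Argyris space (following Brenner--Sung), uses the approximation property \eqref{EhProp} to replace $w_h$ by an $H^2_0(D_h)$ function, applies the classical second-order Poincar\'e inequality to $E_h w_h$, and concludes by the triangle inequality. You instead bootstrap two facts already available in the paper: the first-order Poincar\'e inequality $\|w_h\|_{L^p(D)}\les {\rm diam}(D)\|\nabla w_h\|_{L^p(D)}$ (legitimate because $V_h(D)\subset W^{1,p}_0(\Omega)$ and $w_h$ is supported in the union of elements contained in $\overline{D}$, so it extends by zero to $W^{1,p}_0$ of a ball of comparable diameter) and the discrete interpolation inequality of Lemma \ref{DiscreteInterp}, which localizes for free since every norm of $w_h\in V_h(D)$ over $\Omega$ coincides with the corresponding norm over $D$ (in particular $\|w_h\|_{W^{2,p}_h(\Omega)}=\|w_h\|_{W^{2,p}_h(D)}$, as the nonzero jumps live only on edges of elements contained in $\overline{D}$). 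Combining the two and cancelling one factor of $\|\nabla w_h\|_{L^p(D)}$ (the case $\nabla w_h\equiv 0$ being trivial since then $w_h\equiv 0$) gives the gradient bound with the factor ${\rm diam}(D)$, and feeding back gives the $L^p$ bound with ${\rm diam}(D)^2$; the $m=2$ case is immediate in the broken norm. What each approach buys: yours is more elementary and self-contained, using only tools proved in the body of the paper and no $C^1$ macro-element machinery; the paper's enriching-operator route does not rely on $V_h\subset H^1_0$ (so it would survive in a fully discontinuous setting) and, more to the point here, the property \eqref{EhProp} is reused verbatim in the proof of the discrete H\"older inequality (Lemma \ref{discreteHolder}), so the authors get two results from one construction. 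One cosmetic remark: like the paper's own proof, your argument really establishes the cases $m=0,1$ plus the trivial broken $m=2$ bound, while the statement is phrased for $m=1,2$; this mismatch originates in the paper, not in your argument.
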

\begin{proof}
Denote by $V_{c,h}\subset H^2(\Omega)\cap H^1_0(\Omega)$
the Argyris finite element space \cite{Brenner},
and let $E_h:V_h\to V_{c,h}$ be the enriching
operator constructed in \cite{BrennerSung05} by averaging.
The arguments in \cite{BrennerSung05}
and scaling show that, for $w_h\in V_h(D)$,
\begin{align}\label{EhProp}
E_h w_h\in H^2_0(D_h),\quad \|w_h-E_h w_h\|_{W^{m,p}(D)}\les h^{2-m}\|w_h\|_{W^{2,p}_h(D)}\ (m=0,1,2),
\end{align}
where $D_h$ is given by \eqref{Dhdef}.
Since $E_h w_h\in H^2_0(D_h)$
and ${\rm diam}(D)\ge h$,
the usual Poincar\'e inequality gives
\begin{align*}
\|E_h w_h\|_{W^{m,p}(D_h)}\les {\rm diam}(D_h)^{2-m}\|E_h w_h\|_{W^{2,p}(D_h)}
\les {\rm diam}(D)^{2-m}\|w_h\|_{W^{2,p}_h(D)}.
\end{align*}
Therefore by adding and subtracting terms, we obtain
for $m=0,1$,
\begin{align*}
\|w_h\|_{W^{m,p}(D)}
&\le \|E_h w_h\|_{W^{m,p}(D)}+\|w_h-E_hw_h\|_{W^{m,p}(D)}\\
&\les {\rm diam}(D)^{2-m}\|w_h\|_{W^{2,p}_h(D)}+h^{2-m}\|w_h\|_{W^{2,p}_h(D)}\\
&\les {\rm diam}(D)^{2-m}\|w_h\|_{W^{2,p}_h(D)},
\end{align*}
where again, we have used the assumption $h\le {\rm diam}(D)$.
The proof is complete.\hfill
\end{proof}

\section{A discrete H\"older inequality}
\begin{lemma}\label{discreteHolder}
For any smooth function $\eta$, and $w_h\in V_h(D), v_h\in V_h$, there holds
\begin{align*}
\int_D (\nab \eta \cdot \nab w_h)v_h\, dx \les \|\eta \|_{W^{2,\infty}(D)}\|w_h\|_{W^{2,p}_h(D)}\|v_h\|_{W^{-1,p^\prime}(D)}.
\end{align*}
\begin{proof}
Let $E_h:V_h\to V_c$ be the enriching operator
in Lemma \ref{discretePoincare} satisfying \eqref{EhProp}.
Since $E_h w_h\in H^2(D)$ we have
\begin{align*}
\int_D (\nab \eta \cdot \nab (E_hw_h))v_h\, dx 
&\les 
\|\nab \eta \cdot \nab (E_hw_h)\|_{W^{1,p}(D)}\|v_h\|_{W^{-1,p^\prime}(D)}\\
&\les 
\|\eta\|_{W^{2,\infty}(D)} \|E_hw_h\|_{W^{2,p}(D)}\|v_h\|_{W^{-1,p^\prime}(D)}\\
&\les 
\|\eta\|_{W^{2,\infty}(D)} \|w_h\|_{W^{2,p}_h(D)}\|v_h\|_{W^{-1,p^\prime}(D)}
\end{align*}
Combining this estimate with the triangle inequality, \eqref{EhProp}, and an inverse estimate gives
\begin{align*}
\int_D (\nab \eta \cdot \nab w_h)v_h\, dx 
& = \int_D (\nab \eta \cdot \nab (E_hw_h))v_h\, dx 
+\int_D (\nab \eta \cdot \nab (w_h-E_hw_h))v_h\, dx \\
&\les \|\eta\|_{W^{2,\infty}(D)} \big(\|w_h\|_{W^{2,p}_h(D)}\|v_h\|_{W^{-1,p^\prime}(D)}
+ \|w_h-E_h w_h\|_{W^{1,p}(D)}\|v_h\|_{L^{p^\prime}(D)}\big)\\
&\les \|\eta\|_{W^{2,\infty}(D)} \|w_h\|_{W^{2,p}_h(D)}\|v_h\|_{W^{-1,p^\prime}(D)}.
\end{align*}
\hfill
\end{proof}
\end{lemma}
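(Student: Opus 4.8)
The difficulty in a direct argument is that $\nab w_h$ is only piecewise polynomial, so $\nab\eta\cdot\nab w_h$ does \emph{not} lie in $W^{1,p}(D)$ globally and cannot be paired against $v_h$ through the duality defining $\|\cdot\|_{W^{-1,p'}(D)}$ in \eqref{Hm1norm}. The plan is therefore to replace $w_h$ by a globally $H^2$-conforming surrogate. Let $V_{c,h}\subset H^2(\Omega)\cap H^1_0(\Omega)$ be the Argyris space and $E_h\colon V_h\to V_{c,h}$ the averaging enriching operator of \cite{BrennerSung05}; by the same scaling argument already used in Lemma~\ref{discretePoincare} it satisfies $E_hw_h\in H^2_0(D_h)$ together with $\|w_h-E_hw_h\|_{W^{m,p}(D)}\les h^{2-m}\|w_h\|_{W^{2,p}_h(D)}$ for $m=0,1,2$, see \eqref{EhProp}. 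First I would split
\begin{align*}
\int_D (\nab\eta\cdot\nab w_h)v_h\,dx
&=\int_D \bigl(\nab\eta\cdot\nab(E_hw_h)\bigr)v_h\,dx
+\int_D \bigl(\nab\eta\cdot\nab(w_h-E_hw_h)\bigr)v_h\,dx\\
&=:\mathrm{I}+\mathrm{II},
\end{align*}
and bound the two pieces separately.

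For the conforming term $\mathrm{I}$, since $E_hw_h\in H^2(D)$ the product $\nab\eta\cdot\nab(E_hw_h)$ lies in $W^{1,p}(D)$, so by the definition \eqref{Hm1norm} of the negative norm, the product rule, and the $W^{2,p}$-stability of $E_h$ (triangle inequality and \eqref{EhProp}) I expect
\begin{align*}
\mathrm{I}&\le \|\nab\eta\cdot\nab(E_hw_h)\|_{W^{1,p}(D)}\,\|v_h\|_{W^{-1,p'}(D)}\\
&\les \|\eta\|_{W^{2,\infty}(D)}\,\|E_hw_h\|_{W^{2,p}(D)}\,\|v_h\|_{W^{-1,p'}(D)}
\les \|\eta\|_{W^{2,\infty}(D)}\,\|w_h\|_{W^{2,p}_h(D)}\,\|v_h\|_{W^{-1,p'}(D)}.
\end{align*}
For the nonconforming remainder $\mathrm{II}$ I would use H\"older's inequality together with $\|\nab(w_h-E_hw_h)\|_{L^p(D)}\les h\|w_h\|_{W^{2,p}_h(D)}$ from \eqref{EhProp} and the inverse-type bound $\|v_h\|_{L^{p'}(D)}\les h^{-1}\|v_h\|_{W^{-1,p'}(D)}$ valid on the finite element space (obtained by testing $v_h$ against element-localized representatives and scaling), giving
\begin{align*}
\mathrm{II}&\le \|\nab\eta\|_{L^\infty(D)}\,\|\nab(w_h-E_hw_h)\|_{L^p(D)}\,\|v_h\|_{L^{p'}(D)}\\
&\les \|\eta\|_{W^{1,\infty}(D)}\,\|w_h\|_{W^{2,p}_h(D)}\,\|v_h\|_{W^{-1,p'}(D)}.
\end{align*}
Adding the bounds for $\mathrm{I}$ and $\mathrm{II}$ (and using $\|\eta\|_{W^{1,\infty}(D)}\le\|\eta\|_{W^{2,\infty}(D)}$) yields the asserted estimate.

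The one step that needs genuine care is the invocation of the enriching operator $E_h$ with the \emph{local} approximation and support properties of \eqref{EhProp} on a general quasi-uniform simplicial mesh; but since this is exactly the machinery already set up in the proof of Lemma~\ref{discretePoincare}, I would simply reuse it. The inverse estimate $\|v_h\|_{L^{p'}(D)}\les h^{-1}\|v_h\|_{W^{-1,p'}(D)}$ is routine, and everything else reduces to H\"older's inequality and the triangle inequality, so I do not anticipate any serious obstruction.
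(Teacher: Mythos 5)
Your proposal is correct and follows essentially the same route as the paper's own proof: the same decomposition via the enriching operator $E_h$ into a conforming part (bounded by duality against $\|v_h\|_{W^{-1,p'}(D)}$ after the product rule) and a remainder (bounded by H\"older, the approximation property \eqref{EhProp}, and the inverse estimate $\|v_h\|_{L^{p'}(D)}\les h^{-1}\|v_h\|_{W^{-1,p'}(D)}$). The only difference is that you spell out the motivation and the inverse estimate slightly more explicitly than the paper does.
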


\end{document}